\numberwithin{equation}{theo}
\numberwithin{equation}{propo}
\newtheorem{ass}{Assumption}
\newtheorem*{ass*}{Assumption}
\newtheorem{rmq}{Remark}
\newtheorem{df}{Definition}
\newtheorem{thm}{Theorem}
\newtheorem*{thm1*}{thm 1}
\newtheorem*{thm2*}{thm 2}
\newtheorem*{thm3*}{thm 3}
\newtheorem{prop}{Proposition}
\newtheorem*{thm*}{Theorem}
\newtheorem*{prop*}{Proposition}
\newtheorem*{prop2*}{prop 2}
\newtheorem*{prop3*}{prop 3}
\newtheorem*{prop5*}{prop 5}
\newtheorem{lemma}{Lemma}
\newtheorem{cor}{Corollary}
\titleformat\section{}{}{0pt}{\Large\scshape\bfseries\filcenter\thesection{} - }
\numberwithin{equation}{section}
\begin{document}

\title{On the non-transverse homoclinic channel of a center manifold}
\author{Cezary Olszowiec, Dmitry Turaev} 
\date{}

\maketitle



\noindent
\let\thefootnote\relax\footnote{{
\hspace{-0.6cm} cezary.olszowiec@gmail.com \\
}}


\begin{abstract}
We consider a scenario when a stable and unstable manifolds of compact center manifold of a saddle-center coincide. The normal form of the ODE governing the system near the center manifold is derived and so is the normal form of the return map to the neighbourhood of the center manifold.
The limit dynamics of the return map is investigated by showing that it might take the form of a Henon-like map possessing a Lorenz-like attractor or satisfy 'cone-field condition' resulting in partial hyperbolicity. We consider also motivating example from game theory.
\end{abstract}

\section{Introduction}
\subsection{Outline}
In this paper we investigate a toy-model consisting of a saddle-center in $\mathbb R^4$ and its $2$ - dimensional compact center manifold which $3$ - dimensional stable and unstable manifolds coincide, i.e. they create a non-transverse homoclinic channel, later denoted by $I$ (see fig. \ref{fig:toyModel}). We derive normal form of the return map to the vicinity of the center manifold along $I$ and under some additional assumptions investigate its dynamics. Although all of the orbits starting in the vicinity of $I$ accumulate on $I$, the limit dynamics of the return map might be very complicated. Even in the very simple scenario the return map might turn out to be a $3$ - dimensional Henon-like map possessing a Lorenz-like attractor (see \cite{LorenzLike1}, \cite{LorenzLike2}). The return map can also satisfy the so-called 'cone-field condition' and hence be partially hyperbolic in the originally investigated coordinates, which leads to the existence of smooth invariant foliation. Factorizing along the leaves of this foliation reduces the problem to the investigation $2$-dimensional mapping.
\newline
The motivation for studying this toy-model comes from the bimatrix Rock-Scissors-Paper game. In \cite{Olszowiec1} the problem of describing the dynamics on the boundary of the codimension $1$ was considered. In RSP model the existence of $2$ - dimensional invariant manifold with connecting non-transverse homoclinic channel naturally appears for all considered parameter values. In fact this $2$-dimensional invariant manifold has $6$ - connected components and they are connected with the corresponding $6$ pieces of the non-transverse homoclinic channel. Each of these pieces lies in a different invariant codimension $1$ subspace. In this paper we provide general picture of RSP model as an example in the section \ref{RSPsub} as well as relation to the toy-model we consider. In particular we touch the questions arising in RSP model including the form of the scattering maps defined by the pieces of the non-transverse homoclinic channel and dynamics in the vicinity of this channel, i.e. the possibility of reduction the problem to the study of the scattering maps and infinite shadowing of their composition.
\newline
Despite that in the RSP model the homoclinic channel is non-transverse and the general theory of scattering maps assumes transversality condition to hold (see \cite{Scatt}), the considered homoclinic channel in fact fulfills the invariant codimension $1$ invariant subspaces, hence it makes it possible to consider scattering maps in the RSP model. However in the absence of the transversality condition it is not possible to utilise topological tools \cite{Cov1} to ensure infinite shadowing of the scattering maps as in \cite{Scatt}, \cite{ShadowingLemma}, \cite{Diffusion1}. Furthermore, in comparison to our toy-model we provide explanation why one should not expect infinite shadowing of the scattering maps in the RSP model.
\newline
Scattering maps are a powerful geometric tool for investigations especially in diffusion problems in Hamiltonian systems (scattering map becomes symplectic in this case). However, they are not defined in the case of intersection of invariant manifolds being robustly non-transvere. The problem of shadowing along such a non-transverse connections with the usage of topological tools was investigated in \cite{Adria}, which is also related to the problems from RSP model \cite{Olszowiec1}.
\newline
Although in theory one gets rid off the issues with the non-transversality via perturbation methods, in practice the problems of non-transversality have to be dealt with and appear in real systems, e.g. in Celestial Mechanics.
As an example we refer to \cite{Masd}, where it has been found, that the full set of heteroclinic connections between invariant tori in a fixed energy level of the Hill's model is bounded by the heteroclinic connection between periodic orbits and also by a pair of non-transverse heteroclinic connections of the tori (in fact the invariant manifolds of the tori are tangent along this connection)$.^*$\footnote{*We thank Josep-Maria Mondelo for this reference.} Our paper also adresses this type of questions. In fact we show that even in the simplest toy-model with additional restrictive assumptions, one should expect very complicated behaviour, e.g. existence of the Lorenz-like attractor for the return map which in suitable coordinates can be presented as a $3$-dimensional Henon-like map. It was proven in \cite{LorenzLike1}, \cite{LorenzLike2} that under some additional assumptions such a Henon-like map can be approximated by the time - $1$ map of the flow governed by the Shimizu-Morioka system. The existence of the geometric Lorenz attractor for the Shimizu-Morioka system was recently proven in \cite{Shimizu}.

\subsection{The organisation of the paper}
In the remaining part of this section we briefly introduce considered model and state the main results of the paper.
\newline
In the section \ref{examples} we describe an example related to the considered toy-model, i.e. Rock-Scissors-Paper game.
\newline
In the section \ref{nf1} we derive the normal form of the ODE, governing considered toy-model, near the center manifold.
\newline
Section \ref{nf2} contains derivation of the normal form of the return map to the cross section in the vicinity of the center manifold.
\newline
In \ref{foliation}, under some additional assumption we prove the existence of the foliation theorem resulting in the description of the limit dynamics of the return map. In this case we prove existence of a Lorenz-like attractor for the return map.
\newline
We show in section \ref{partialHyp} that the return map might satisfy a 'cone-field condition', resulting in its partial hyperbolicity and existence of the foliation, which allows us to factorize the $3$-dimensional return map along the leaves of this foliation. 

\subsection{Setting of the problem and statement of the results}
We consider a saddle-center in $\mathbb R^4$ and its $2$ - dimensional center manifold $D$ foliated by periodic orbits. We assume that the center manifold possesses the $3$ - dimensional stable and unstable manifolds which coincide (see fig. \ref{fig:toyModel}). We prove the following theorem about the normal form of this system near the center manifold:
\begin{thm*}
There exists local coordinates $(v_1, v_2, r, \phi) \in \mathbb R^4$ near the center manifold $D$ in which $(0,0,0,0)$ is the saddle-center and the set $\{(0,0, r, \phi) \ | \ r \in [0,1], \phi \in [0,1) \}$ is the center manifold $D$. The $(r,\phi)$ are polar coordinates. The coordinates $v_1,v_2$ correspond to the locally unstable and stable directions. The normal form of the ODE describing this model, near the center manifold is:
\begin{equation*}
\begin{cases}
\dot{v_1}= p(r) \cdot v_1  + p_0(v_1,v_2,r,\phi) \cdot {v_1}^2 v_2
\\
\dot{v_2}= \varsigma(r) \cdot v_2 +  \varsigma_0(v_1,v_2,r,\phi) \cdot v_1 {v_2}^2
\\
\dot{r} = v_1 v_2 \cdot r_{0}(v_1,v_2,r,\phi) 
\\
\dot{\phi} = \omega(r) + v_1 v_2 \cdot \omega_{0}(v_1,v_2,r,\phi) 
\end{cases}
\end{equation*}
with $p_0$, $\varsigma_0$, $r_0$, $\omega_0$ being $1$ - periodic in $\phi$.
\end{thm*}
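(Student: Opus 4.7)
The plan is to build the coordinates in four successive normalizations, each preserving the work of the preceding ones.

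First, invoke the center, stable and unstable manifold theorems to straighten the three invariant manifolds so that $D=\{v_1=v_2=0\}$, $W^s_{loc}=\{v_1=0\}$ and $W^u_{loc}=\{v_2=0\}$. Invariance of $\{v_1=0\}$ and $\{v_2=0\}$ forces $\dot v_1$ and $\dot v_2$ to be divisible by $v_1$ and $v_2$ respectively. Since $D$ is a compact disc foliated by periodic orbits around the saddle-center at the origin, choose polar action-angle coordinates $(r,\phi)$ on $D$, with $\phi$ of period $1$, so that the restricted flow on $D$ reads $\dot r=0$, $\dot\phi=\omega(r)$.

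Second, extend $(r,\phi)$ off $D$ along the strong stable and strong unstable foliations of $D$ sitting in $W^s_{loc}$ and $W^u_{loc}$: on each strong leaf declare $(r,\phi)$ equal to the value at the foot of the leaf on $D$. Flow-invariance of these foliations then gives $\dot r=0$ and $\dot\phi=\omega(r)$ along every trajectory lying in $W^s_{loc}\cup W^u_{loc}$, so $\dot r$ and $\dot\phi-\omega(r)$ vanish on $\{v_1=0\}\cup\{v_2=0\}$ and are therefore divisible by $v_1 v_2$. This produces the third and fourth equations of the normal form, with functions $r_0,\omega_0$ inheriting $1$-periodicity in $\phi$ from the angular coordinate on $D$.

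Third, linearize the transverse rates. On each strong stable leaf of $\gamma_r$ the Poincar\'e return map is a $1$-dimensional contraction with multiplier $\mu_s(r)\in(0,1)$; since there are no resonances for a single non-unit multiplier, it admits smooth linearization, and propagating the linearized parameter by the flow yields $\dot v_2|_{W^s_{loc}}=\varsigma(r)v_2$ with $\varsigma(r)=\omega(r)\ln\mu_s(r)$; symmetrically $\dot v_1|_{W^u_{loc}}=p(r)v_1$. Writing $\dot v_1=v_1 F_1$, we have thus established $F_1(v_1,0,r,\phi)=p(r)$. A final rescaling $v_1\mapsto v_1\,\rho(v_2,r,\phi)$, where $\rho$ is obtained by integrating the first-order linear transport equation $\varsigma(r)v_2\,\partial_{v_2}\ln\rho+\omega(r)\,\partial_\phi\ln\rho=F_1(0,v_2,r,\phi)-p(r)$ along the (attracted to $D$) flow on $W^s_{loc}$, with normalization $\rho\to 1$ as $v_2\to 0$, additionally enforces $F_1(0,v_2,r,\phi)=p(r)$. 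Convergence of the characteristic integral follows from the exponential contraction of the flow on $W^s_{loc}$ toward $D$ together with the vanishing of $F_1-p(r)$ on $D$. The symmetric rescaling is applied to $v_2$ on $W^u_{loc}$.

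At this point $F_1(v_1,0,r,\phi)=p(r)$ and $F_1(0,v_2,r,\phi)=p(r)$. The first identity gives the splitting $F_1=p(r)+v_2 G_1$, and the second then forces $G_1(0,v_2,r,\phi)=0$, so $G_1$ is divisible by $v_1$: $G_1=v_1 p_0$. Therefore $\dot v_1=p(r)v_1+p_0 v_1^{2}v_2$, and the equation for $\dot v_2$ follows by the symmetric argument; the $1$-periodicity in $\phi$ of $p_0,\varsigma_0,r_0,\omega_0$ is inherited throughout. The main technical obstacle is the third step: carrying out the smooth (not merely formal) linearization of the leaf-wise contraction \emph{with smooth dependence} on the parameter $(r,\phi)$, and doing so compatibly with the foliation straightening of the second step. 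Once this is achieved, the smooth extension of the coordinates built on $W^s_{loc}\cup W^u_{loc}$ to a full neighborhood of $D$ is a routine application of standard smooth-extension results.
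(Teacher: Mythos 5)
Your proof reaches the same normal form but by a different normalization scheme than the paper. The paper passes from the preliminary form (3.3) to action–angle coordinates on $D$, then performs four coordinate changes $\xi_i=v_i(1+h_i)$ whose $h_i$ are constructed as strong stable/unstable manifolds of auxiliary planar systems (killing $p_1,p_2,\varsigma_1,\varsigma_2$ in turn), and finally removes the residual $\phi$-dependence of the linear rates $p(r,\phi),\varsigma(r,\phi)$ by an averaging substitution $v_i\mapsto v_i(1+w_i(r,\phi))$. You instead (i) obtain the $v_1v_2$ factors in $\dot r$ and $\dot\phi-\omega(r)$ geometrically, by propagating $(r,\phi)$ along the strong stable and unstable foliations of $D$ inside $W^s_{loc}$, $W^u_{loc}$; (ii) produce $F_1(v_1,0,r,\phi)=p(r)$ and $F_2(0,v_2,r,\phi)=\varsigma(r)$ directly — with $\phi$-independent coefficients — by smoothly linearizing the one-dimensional fibered Poincar\'e maps; and (iii) enforce $F_1(0,v_2,r,\phi)=p(r)$ by integrating a transport equation along the contracting flow on $W^s_{loc}$. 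Your route buys the $\phi$-independence for free from the leaf-wise linearization, avoiding the paper's separate averaging lemma, at the cost of invoking parametric Sternberg linearization; as you correctly flag, this is the real technical heart — the smooth (and, since the paper works in the analytic category, analytic) dependence of the linearizing fiber coordinate on $(r,\phi)$ must be cited or proved, e.g., via the strong stable/unstable manifold theorem applied to a suspended system, which is essentially the machinery the paper uses. Two smaller points: the right-hand side of your transport equation should read $p(r)-F_1(0,v_2,r,\phi)$ rather than $F_1(0,v_2,r,\phi)-p(r)$ (with the substitution $v_1\mapsto v_1\rho$ you need $\frac{d}{dt}\ln\rho=p-F_1$ on $W^s_{loc}$), and one should verify that the one-dimensional linearization and the characteristic integrals stay analytic so that the final quotients $p_0,\varsigma_0,r_0,\omega_0$ by $v_1v_2$ are analytic, which is what makes the factorization legitimate.
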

We derive the normal form of the return map along the non-transverse homoclinic channel $I$, created formed by the coincidence of stable and unstable manifolds of $D$:
\begin{thm*}
The normal form of the return map to the cross section $S_1 := \{ v_1 = h \}$ is
\begin{equation*}
\left(
\begin{array}{c}
v_2\\
r\\
\phi
\end{array}
\right)
\mapsto
\left(
\begin{array}{c}
h \cdot  \Big(\frac{v_2 \cdot a_1(r,\phi)}{h}  \Big)^{\cdot \frac{- \varsigma(b_0(r,\phi))}{p(b_0(r,\phi))}} + O(v_2 \cdot ln(v_2)) \\
b_0(r,\phi) + O(v_2) \\
c_0(r,\phi)  - \frac{\omega(b_0(r,\phi))}{p(b_0(r,\phi))} \cdot ln(\frac{v_2}{h}) + O(v_2 \cdot ln(v_2)) \ \ (mod \ 1)
\end{array}
\right)
\end{equation*}
with $b_0$, $c_0$, $a_1$ being $1$ - periodic in $\phi$.
\end{thm*}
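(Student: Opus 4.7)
The plan is to decompose the return map to $S_1 = \{v_1 = h\}$ as $\Psi_{\text{loc}} \circ \Psi_{\text{glob}}$, where $\Psi_{\text{glob}} : S_1 \to S_0$ is the global passage along the non-transverse homoclinic channel $I$ (with $S_0 = \{v_2 = h\}$ an auxiliary cross section on the stable side), and $\Psi_{\text{loc}} : S_0 \to S_1$ is the local passage obtained by integrating the normal form of the previous theorem. The smooth functions $a_1, b_0, c_0$ will arise as data of $\Psi_{\text{glob}}$, while the exponent $-\varsigma(b_0)/p(b_0)$ and the logarithmic $\phi$-term will be produced by $\Psi_{\text{loc}}$.

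For $\Psi_{\text{glob}}$ I would exploit that on $S_1$ the local trace of $W^u(D)$ is $\{v_2 = 0\}$ and on $S_0$ the local trace of $W^s(D)$ is $\{v_1 = 0\}$, and that these two sets are identified by the flow by assumption. Hence $\Psi_{\text{glob}}$ restricted to $\{v_2 = 0\}$ is a diffeomorphism onto $\{v_1 = 0\}$ of the $(r,\phi)$-coordinates, giving
\begin{equation*}
\Psi_{\text{glob}}(h,0,r,\phi) = (0,\, h,\, b_0(r,\phi),\, c_0(r,\phi))
\end{equation*}
with $b_0, c_0$ smooth and $1$-periodic in $\phi$ (the periodicity is inherited from the angular coordinate on $D$). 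A first-order Taylor expansion in $v_2$ then yields
\begin{equation*}
\Psi_{\text{glob}}(h, v_2, r, \phi) = (a_1(r,\phi) v_2,\, h,\, b_0(r,\phi),\, c_0(r,\phi)) + O(v_2^2),
\end{equation*}
where $a_1$ is the derivative with respect to $v_2$ of the first component at $v_2 = 0$, again $1$-periodic in $\phi$.

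For $\Psi_{\text{loc}}$ I would integrate the normal form from a starting point $(v_1^{\ast}, h, r^{\ast}, \phi^{\ast}) \in S_0$ with $v_1^{\ast}$ small, until $v_1$ reaches $h$. The key structural feature of the normal form is that on the invariant set $\{v_1 v_2 = 0\} = W^u \cup W^s$ all nonlinear correction terms in the four ODEs vanish simultaneously, so at leading order $r(t) \equiv r^{\ast}$, $v_1(t) = v_1^{\ast} e^{p(r^{\ast}) t}$, $v_2(t) = h e^{\varsigma(r^{\ast}) t}$ and $\phi(t) = \phi^{\ast} + \omega(r^{\ast}) t$. Solving $v_1(T) = h$ gives the transit time $T = -p(r^{\ast})^{-1} \ln(v_1^{\ast}/h)$, and substitution into the $v_2$ and $\phi$ solutions produces the closed-form leading terms. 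Setting $v_1^{\ast} = a_1(r,\phi) v_2$, $r^{\ast} = b_0(r,\phi)$, $\phi^{\ast} = c_0(r,\phi)$ as provided by $\Psi_{\text{glob}}$ reproduces the displayed expressions; the bounded term $-(\omega(b_0)/p(b_0))\ln a_1(r,\phi)$ appearing in the $\phi$-component is absorbed into a redefinition of $c_0$.

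The main technical obstacle is justifying the $O(v_2)$ and $O(v_2 \ln v_2)$ remainder estimates across the logarithmically long transit time $T \sim |\ln v_2|$. The useful identity, obtained directly from the normal form, is
\begin{equation*}
\frac{d}{dt}(v_1 v_2) = \bigl(p(r) + \varsigma(r)\bigr) v_1 v_2 + O\bigl((v_1 v_2)^2\bigr),
\end{equation*}
so that $v_1 v_2$ is itself governed by a one-dimensional ODE and its integral over the local passage can be bounded in terms of the endpoint values; this in turn controls $\int_0^T v_1 v_2 \, r_0 \, dt$ and $\int_0^T v_1 v_2 \, \omega_0 \, dt$, bounding the drift of $r$ and the extra $\phi$ correction respectively. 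One then solves the perturbed equation $v_1(T) = h$ for $T$ by a contraction argument around the leading-order value, and back-substitutes into the solutions for $v_2$ and $\phi$, where the single extra factor of $T$ coming from the time-integration of the nonlinear corrections produces the $\ln v_2$ in the error terms. Composing the resulting estimate with the smooth $\Psi_{\text{glob}}$ then yields the stated normal form.
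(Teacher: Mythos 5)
Your proposal is correct and follows essentially the same route as the paper: decompose the return map as local passage composed with global passage; use the coincidence $W^u(D)=W^s(D)$ to argue that the global map sends the trace of $W^u$ on $S_1$ to the trace of $W^s$ on the auxiliary section, so the constant term $a_0$ in the Taylor expansion of the first coordinate vanishes and the diffeomorphism of $(r,\phi)$-coordinates supplies $a_1, b_0, c_0$; then integrate the normal form ODE from $\{v_2=h\}$ to $\{v_1=h\}$, read off the leading-order Shilnikov map from the truncated (linear) solution, and bound the deviations. The one place where you diverge from the paper's technique is the error control. The paper sets up the integral equations for $(v_1,v_2,r,\phi)$, runs Picard successive approximations starting from the truncated solution, and argues by induction that the error terms in the approximants keep the form recorded in its equation for the true solution, from which the $O(v_2)$ drift in $r$ and the $O(v_2\ln v_2)$ correction in $\phi$ fall out. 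You instead observe that the specific algebraic structure of the normal form makes $u:=v_1 v_2$ satisfy the closed scalar ODE $\dot u = (p(r)+\varsigma(r))u + O(u^2)$, so that $u$ decays monotonically from its entry value $O(v_2)$, and all nonlinear contributions to $\dot r$ and $\dot\phi$ are controlled by $\int_0^T u\,dt = O(v_2)$; the extra factor of $T\sim|\ln v_2|$ in the $\phi$-estimate then comes from the secular term $\int_0^T\omega(r(s))\,ds$. This is a cleaner way to see what the paper extracts through its iteration scheme, and it more transparently exposes why the normal form of the preceding theorem (in which \emph{every} correction term is a multiple of $v_1 v_2$) is exactly what makes the remainder estimates uniform over the logarithmically long transit. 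You are also right that the constant $-\tfrac{\omega(b_0)}{p(b_0)}\ln a_1(r,\phi)$ produced by the transit-time formula must be absorbed into a redefinition of $c_0$; the paper does this implicitly when passing to its final statement. No gap.
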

We investigate the dynamics of the return map under condition $\varsigma(r)+p(r)<0$:
\begin{thm*}
If $\varsigma(r)+p(r)<0$, then there exists a neighbourhood of the homoclinic channel $I$, such that orbit of each point from this neighbourhood accumulates on $I$.
\end{thm*}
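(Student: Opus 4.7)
I would first interpret what ``accumulates on $I$'' means in the coordinates of Theorem~1. Because the unstable manifold of $D$ is locally $\{v_2 = 0\}$ and coincides by assumption with the stable manifold, the trace of the homoclinic channel $I$ on the cross-section $S_1 = \{v_1 = h\}$ is exactly $\{v_2 = 0\}\cap S_1$. Hence the claim reduces to showing that, for every initial point in a sufficiently small neighborhood of $I \cap S_1$ inside $S_1$, the iterates of the return map $F$ of Theorem~2 have their $v_2$-coordinate tending to zero. Accumulation of the full continuous-time orbit on $I$ then follows by flow-continuity: as $v_2 \to 0$, the orbit segment between two consecutive returns shadows the unique trajectory with the same $(r,\phi)$ and $v_2 = 0$, which lies on $I$.

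The core of the proof is a uniform power-law contraction in $v_2$. The hypothesis $\varsigma(r) + p(r) < 0$, together with the continuity of $\varsigma,p$ on the compact interval $r \in [0,1]$, yields a uniform lower bound
\[ \lambda(r) := -\varsigma(r)/p(r) \ge 1 + 2\delta_0 \]
for some $\delta_0 > 0$. Continuity and $1$-periodicity in $\phi$ imply that $a_1,b_0,c_0$ are uniformly bounded on $[0,1]\times[0,1)$. Substituting into the first coordinate of $F$ and abbreviating $\lambda_* := \lambda(b_0(r,\phi))$, the leading term $h^{\,1-\lambda_*}a_1^{\lambda_*}v_2^{\lambda_*}$ is bounded by $K_0 v_2^{1+2\delta_0}$ on any neighborhood $\{v_2 \le v_2^*\}$ with $K_0$ independent of $(r,\phi)$. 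Choosing $v_2^*$ small enough that the remainder $O(v_2\ln v_2)$ is absorbed into the leading power, one gets the uniform estimate
\[ v_2' \le K\, v_2^{1+\delta_0}\qquad\text{for all } (v_2,r,\phi)\in(0,v_2^*]\times[0,1]\times[0,1). \]

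A final shrinkage of $v_2^*$ so that $K(v_2^*)^{\delta_0} \le \tfrac{1}{2}$ gives $v_2^{(n+1)} \le \tfrac12 v_2^{(n)}$ at every step, whence $v_2^{(n)} \to 0$ super-exponentially. Because $(r^{(n)},\phi^{(n)})$ remains in the compact set $[0,1]\times[0,1)$, all iterates stay in the region of validity of $F$. The limit set of every discrete orbit in the neighborhood is therefore contained in $\{v_2=0\}\cap S_1 = I\cap S_1$, and the flow-continuity argument from the first paragraph upgrades this to accumulation of the full continuous-time orbit on $I$.

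\textbf{The main obstacle} is verifying that the $O(v_2\ln v_2)$ remainder is indeed dominated by the leading power $v_2^{1+2\delta_0}$ on the neighborhood: this amounts to tracking the uniform bound on the implicit constants produced in Theorem~2, which follows by continuity on the compact center manifold. Once this absorption is established, the iteration and shadowing steps are routine.
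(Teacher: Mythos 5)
The overall strategy is the right one — reduce to the return map, extract a uniform lower bound $-\varsigma/p \ge 1 + 2\delta_0 > 1$ from $\varsigma+p<0$, $p>0$, and compactness of $D$, and then iterate a uniform contraction estimate in the $v_2$-coordinate, upgrading to the flow by continuity. However, your central quantitative step does not survive scrutiny as written: you claim the additive $O(v_2\ln v_2)$ remainder can be ``absorbed into the leading power'' $v_2^{1+2\delta_0}$ after shrinking $v_2^*$. This is false for an additive remainder of that size, because
\[
\frac{v_2\,|\ln v_2|}{v_2^{1+\delta_0}} \;=\; v_2^{-\delta_0}\,|\ln v_2| \;\longrightarrow\; \infty \qquad\text{as } v_2\to 0^+,
\]
so $v_2\ln v_2$ \emph{dominates} $v_2^{1+\delta_0}$ near $0$, and no choice of $v_2^*$ gives $K_0 v_2^{1+2\delta_0} + C v_2|\ln v_2| \le K v_2^{1+\delta_0}$. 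Worse, if the $O(v_2\ln v_2)$ term were genuinely additive it would make the return map \emph{not} contracting in $v_2$ at all, since $\bar v_2/v_2 \sim |\ln v_2|\to\infty$. Your ``main obstacle'' paragraph diagnoses the difficulty as tracking the implicit constants uniformly, but uniform constants do not save an estimate that fails for order-of-magnitude reasons.

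The missing idea is that the remainder is not additive but multiplicative relative to the leading power. This is visible in the paper's actual derivation (equations of the local Shilnikov map and of the return map in the $(z,r,\phi)$ coordinates): the correction enters in the exponent as $O(ze^z)$, so in the $v_2$-coordinate one really has
\[
\bar v_2 \;=\; h\Big(\tfrac{a_1(r,\phi)\,v_2}{h}\Big)^{\lambda_*}\,\bigl(1 + O(v_2\ln v_2)\bigr),
\]
with $\lambda_* = -\varsigma(b_0)/p(b_0) \ge 1 + 2\delta_0$; the additive error is therefore $O(v_2^{\lambda_*}\cdot v_2\ln v_2)$, far smaller than what you used, and the absorption $\bar v_2 \le K v_2^{1+\delta_0}$ does go through. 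Cleaner still is to argue directly in the logarithmic variable $z = \ln(v_2/h)$, where the return map has the form $\bar z = \Omega(r,\phi) + \Gamma(r,\phi)z + O(ze^z)$ with $\Gamma \ge 1+2\delta_0$ and $z<0$ near the channel: since $ze^z\to 0$ while $\Gamma z\to -\infty$, one gets $\bar z \le \Omega_{\max} + (1+\delta_0)z$ on $\{z\le z_0\}$ for $z_0$ sufficiently negative, hence $z^{(n)}\to -\infty$ at a geometric rate and $v_2^{(n)}\to 0$ super-exponentially. You should replace the ``absorption'' sentence with one of these two arguments; the rest of your write-up (uniform bounds from compactness, staying in the domain of validity, shadowing in continuous time) is fine.
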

Furthermore we investigate the dynamics of the return map after the coordinate transformation $z \approx ln(v_2)$ 
\begin{thm*}
The normal form of the return map along $I$, in the coordinates $(z,r,\phi)$ is:
\begin{equation}\label{nontruncIntro}
\left(
\begin{array}{c}
z\\
r\\
\phi
\end{array}
\right)
\mapsto
\left(
\begin{array}{c}
\Omega(r,\phi) + \Gamma(r,\phi) \cdot z + O(z \cdot e^z)\\
b_0(r, \phi) + O(e^z)\\
c(r,\phi) + z + O(z \cdot e^z)  \ \ (mod \ 1)
\end{array}
\right)
\end{equation}
with $\Omega$, $\Gamma(r,\phi)=-\frac{\varsigma(b_0(r,\phi))}{p(b_0(r,\phi))}$, $b_0$, $c$ being $1$ - periodic in $\phi$.
\end{thm*}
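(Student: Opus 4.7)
The plan is to obtain (\ref{nontruncIntro}) by a direct substitution $z:=\ln(v_2/h)$ (equivalently $v_2=he^{z}$) in the return map of the preceding theorem. Because that map sends $v_2$ to a power of $v_2$, taking the logarithm of its $v_2$-component will convert the power-law dependence into an affine function of $z$, which is exactly the structure to be proved.

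The $r$-equation is immediate from $O(v_2)=O(e^{z})$: one reads off $r'=b_0(r,\phi)+O(e^{z})$. For the $z$-equation, starting from
\[
v_2'\;=\;h\Bigl(\tfrac{v_2\,a_1(r,\phi)}{h}\Bigr)^{\Gamma(r,\phi)}+O(v_2\ln v_2)
\]
with $\Gamma=-\varsigma(b_0)/p(b_0)$, I would divide by $h$, substitute $v_2=he^{z}$, and take $\ln$. The leading term gives $\Gamma(r,\phi)\bigl(z+\ln a_1(r,\phi)\bigr)$, i.e.\ $\Omega(r,\phi)+\Gamma(r,\phi)z$ with $\Omega:=\Gamma\ln a_1$; the additive remainder turns into an $O(ze^{z})$ correction after expanding $\ln(1+\text{small})$. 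Periodicity of $\Omega$, $\Gamma$ and $b_0$ in $\phi$ is inherited from that of $a_1$ and $b_0$ in the preceding theorem.

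For the $\phi$-equation, the same substitution turns the previous term $-(\omega(b_0)/p(b_0))\ln(v_2/h)$ into $-(\omega(b_0)/p(b_0))\,z$, giving
\[
\phi'\;=\;c_0(r,\phi)-\frac{\omega(b_0(r,\phi))}{p(b_0(r,\phi))}\,z+O(ze^{z})\pmod 1.
\]
The stated form $c(r,\phi)+z+O(ze^z)$ is then obtained by a linear rescaling absorbing the factor $-\omega(b_0)/p(b_0)$ into the definition of $z$ (equivalently, into $\phi$); since this factor is a smooth, non-vanishing, $1$-periodic function of $(r,\phi)$, it redefines the constant part of the equation into the periodic $c$ without altering the affine dependence on $z$ in the $z$- and $r$-equations at leading order.

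The main technical obstacle is the uniform control of the remainder. The term $O(v_2\ln v_2)$ must be tracked through the logarithm in the $z$-equation and through the rescaling of the $\phi$-equation, yielding an $O(ze^{z})$ correction uniformly in $(r,\phi)$ on the range $e^z\ll 1$ (i.e.\ in a one-sided neighbourhood of the homoclinic channel $I$). Once this bookkeeping is settled, (\ref{nontruncIntro}) follows.
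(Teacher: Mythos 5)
Your proposal follows the same two-step route as the paper: first the logarithmic substitution $z=\ln(v_2/h)$, which turns the power-law $\bar v_2$-equation into an affine one in $z$ (this is the paper's Theorem~\ref{fullReturnMap}), and then an $(r,\phi)$-dependent rescaling of $z$ to bring the $z$-coefficient in the $\phi$-equation to $1$ (the paper's Proposition~\ref{rescaling}, with $H(z,r,\phi)=(z\cdot\alpha(r),r,\phi)$, $\alpha(r)=-\omega(r)/p(r)$). The first step and the bookkeeping of the $O(e^z)$ and $O(ze^z)$ remainders are handled the same way in both.

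The gap is in the rescaling step, and it is a real one. You assert that absorbing the factor $-\omega(b_0(r,\phi))/p(b_0(r,\phi))$ into the definition of $z$ (or $\phi$) ``does not alter the affine dependence on $z$,'' but a state-dependent rescaling of $z$ does change the $z$-coefficients. Writing $H(z,r,\phi)=(z\,g(r),r,\phi)$ and computing $H\circ\mathrm{Ret}\circ H^{-1}$, the $\phi$-equation acquires coefficient $\alpha(b_0(r,\phi))/g(r)$ on $z$, and the $z$-equation acquires coefficient $\Gamma(r,\phi)\cdot g(b_0(r,\phi)+O(e^z))/g(r)$. With the paper's choice $g(r)=\alpha(r)$ these coefficients become $\alpha(b_0(r,\phi))/\alpha(r)$ and $\Gamma(r,\phi)\cdot\alpha(b_0(r,\phi))/\alpha(r)$, which equal $1$ and $-\varsigma(b_0)/p(b_0)$ respectively only under the additional identity $\alpha(b_0(r,\phi))=\alpha(r)$ (for instance if $\omega/p$ is constant, or if $b_0(r,\phi)=r$). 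Trying instead to force the $\phi$-coefficient to $1$ by choosing $g(r,\phi)=\alpha(b_0(r,\phi))$ introduces a $\phi$-dependence in $g$, and since $\bar\phi$ depends linearly on $z$ at leading order, the factor $g(\bar r,\bar\phi)$ in the $\bar z$-equation then becomes $z$-dependent and the affine structure is lost. This ratio does not decay as $z\to -\infty$, so it cannot be hidden inside the $O(ze^z)$ remainder. The paper's own write-up of Proposition~\ref{rescaling} glosses over exactly this point, but your version amplifies it by not even writing down the rescaling explicitly, so you should either add the hypothesis $\alpha(b_0(r,\phi))=\alpha(r)$ or weaken the conclusion to state the extra factor in $\Gamma(r,\phi)$ and in the $\phi$-equation.

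One further small slip: you define $\Omega:=\Gamma\ln a_1$, which is the value \emph{before} the rescaling; after rescaling, $\Omega$ picks up the factor $\alpha(b_0(r,\phi))$, consistent with the paper's $\Omega(r,\phi)=\Gamma(r,\phi)\cdot\alpha(b_0(r,\phi))\cdot\ln a_1(r,\phi)$.
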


Assuming that $-\frac{\varsigma(b_0(r,\phi))}{p(b_0(r,\phi))}=\Gamma(r,\phi)=\Gamma \in \mathbb N_2$, we prove that in the limit with $v_2 \rightarrow 0$, the dynamics of the return map are the same as the dynamics of its truncated version:
\begin{equation}\label{truncIntro}
\left(
\begin{array}{c}
z\\
r\\
\phi
\end{array}
\right)
\mapsto
\left(
\begin{array}{c}
\Omega(r,\phi) + \Gamma \cdot z \ (mod \ 1)\\
b_0(r, \phi) \\
c(r,\phi) + z \ \ (mod \ 1)
\end{array}
\right)
\end{equation}

This is due to the following theorem we prove:

\begin{thm*}
For the mapping $\varpi$:
\begin{equation}\label{varpi}
\begin{aligned}
\varpi: \ \mathbb R \times \mathbb R \times \mathbb R \times S^1 \ni \left( \begin{array}{c}
v_2 \\
z \\
r \\ 
\phi \\ 
\end{array} \right) 
\mapsto
\left( \begin{array}{c}
h \cdot \big(\frac{a_1(r,\phi) \cdot v_2}{h} \big)^{\frac{-\varsigma(b_0(r,\phi))}{p(b_0(r,\phi))}} + h.o.t.(v_2) \\
\Omega(r,\phi) + \Gamma(r,\phi) \cdot z + O( v_2 \cdot \ln(v_2)) \ (mod \ 1)\\
b_0(r,\phi) + O( v_2  ) \\
c(r,\phi) + z + O( v_2 \cdot \ln(v_2) ) \ \ (mod \ 1 \ )  \\ 
\end{array} \right) \in \mathbb R \times \mathbb R \times \mathbb R \times S^1 
\end{aligned}
\end{equation}
there exists a $C^1$ and $\varpi$ - invariant foliation $\mathcal{L} = \{h(z,r,\phi)\}$ of the space $(v_2,z,r,\phi)$, with $C^2$ leaves given by the graphs of the functions $v_2 = h(z,r,\phi)$.
\end{thm*}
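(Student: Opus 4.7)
The plan is to construct $\mathcal{L}$ by a graph-transform argument, adapted to the near-singular structure of $\varpi$ at $\{v_2 = 0\}$. I first observe that $\{v_2 = 0\}$ is $\varpi$-invariant: the first component of (\ref{varpi}) vanishes at $v_2 = 0$ since $\Gamma \geq 2$, and the restriction of $\varpi$ to that hypersurface is precisely the truncated map (\ref{truncIntro}). The fiber derivative $\partial_{v_2}$ of the first component is of order $v_2^{\Gamma-1}$, which vanishes at $v_2 = 0$ and thereby provides super-exponential normal contraction toward the base $(z, r, \phi)$-dynamics. This strong fiber contraction is the structural fact underlying the existence of a codimension-one invariant foliation transverse to the $v_2$-axis.

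For small $\delta > 0$, consider the Banach space $\mathcal{H}$ of Lipschitz functions $h : \mathbb{R} \times \mathbb{R} \times S^1 \to [0, \delta]$ with Lipschitz constant bounded by a fixed $L$, equipped with the uniform norm. Write $\varpi = (F_1, G)$, with $F_1$ the $v_2$-component and $G = (G_z, G_r, G_\phi)$ the remaining three. Define the graph transform $T \colon \mathcal{H} \to \mathcal{H}$ by requiring that $\varpi$ maps $\mathrm{graph}(h)$ into $\mathrm{graph}(Th)$, i.e.
\begin{equation*}
(Th)\bigl( G(h(z,r,\phi), z, r, \phi) \bigr) \;=\; F_1\bigl( h(z, r, \phi), z, r, \phi \bigr).
\end{equation*}
To extract a genuine foliation rather than a single invariant manifold, I fix a reference point $(z_0, r_0, \phi_0)$ and work in the affine subspaces $\mathcal{H}_c := \{ h \in \mathcal{H} : h(z_0, r_0, \phi_0) = c \}$ for $c \in [0, \delta]$; the induced dynamics on the reference-height parameter $c$ is itself super-exponentially contracting.

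The contraction estimate $\|Th_1 - Th_2\|_\infty \leq \lambda \|h_1 - h_2\|_\infty$ with $\lambda < 1$ then follows from combining the $v_2^{\Gamma-1} \lesssim \delta^{\Gamma-1}$ bound on $\partial_{v_2} F_1$ with Lipschitz bounds on $G$ that control base distortion. Banach's fixed-point theorem yields a unique $h_c^{\ast} \in \mathcal{H}_c$ for each $c$, and $\mathcal{L} := \{ h_c^{\ast} \}_{c \in [0, \delta]}$ is the desired invariant foliation. The $C^2$ regularity of each leaf is then obtained by applying the Hirsch-Pugh-Shub fiber-contraction theorem to the graph transform lifted to the $1$-jet and $2$-jet bundles of $h$, and the $C^1$ dependence of $h_c^{\ast}$ on $c$ is obtained by differentiating the fixed-point equation in $c$.

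The main obstacle is the $O(v_2 \ln v_2)$ perturbation appearing in the base components of $\varpi$: its $v_2$-derivative is logarithmically singular at $v_2 = 0$, which blocks a direct appeal to the standard smooth invariant foliation theorems. I would carry out all estimates in the logarithmic coordinate $z \approx \ln v_2$ of the preceding theorem, in which the corresponding perturbation has the form $O(z e^z)$ and vanishes smoothly as $z \to -\infty$; the required derivative bounds on $F_1$ and $G$ are then uniform on the relevant domain. The $C^2$ regularity of the leaves in the original $(v_2, z, r, \phi)$ coordinates follows by transporting regularity back through the logarithmic change of variables.
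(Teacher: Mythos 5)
You have followed the orientation of the graph stated in the introduction, but the proof that the paper actually gives (Theorem \ref{foliation} in section \ref{sec:foliation}) constructs the foliation the other way around: the leaves are \emph{curves} $x=h(y)$, i.e.\ $(z,r,\phi)=h(v_2)$, not hypersurfaces $v_2=h(z,r,\phi)$. This is not a cosmetic difference. With $x:=(z,r,\phi)$, $y:=v_2$, $(F,G):=\varpi$, the paper looks for the field $\mu(x,y)=\frac{dx}{dy}$ of tangent hyperplanes, writes the invariance relation $\mu=(A-\mu(\bar x,\bar y)C)^{-1}(\mu(\bar x,\bar y)D-B)$ with $A=\partial_x F$, $B=\partial_y F$, $C=\partial_x G$, $D=\partial_y G$, and runs a contraction argument for the operator $\Gamma_V$ on a ball in a space $V$ of hyperplane fields with the weighted sup-norm $\sup\|\mu/\ln y\|$. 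The $C^1$ smoothness of the field (hence $C^2$ leaves) then follows from the Shashkov fiber-contraction lemmas applied to the formally differentiated fixed-point equation, and the extension of the leaves to $\{v_2=0\}$ is obtained by the singular reparametrization $s:=-(\ln y)^{-1}$, under which $\tilde\mu(x,s)=-\mu(x,e^{-1/s})e^{-1/s}s^{-2}$ extends $C^1$ to $s=0$. The 1-dimensional leaves are what the subsequent ``one-to-one correspondence of orbits'' requires: the holonomy of this foliation, following each leaf from $\{v_2=he^z\}$ down to $\{v_2=0\}$, provides the conjugacy between the full return map and the truncated one; a codimension-one foliation does not produce that projection.

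Beyond the orientation issue there is a concrete gap in your fixed-point scheme. You define the graph transform $T$ on $\mathcal H$ and then restrict to the affine slice $\mathcal H_c=\{h:h(z_0,r_0,\phi_0)=c\}$; but $T$ does \emph{not} preserve $\mathcal H_c$ --- you note yourself that the height parameter is contracted super-exponentially --- so Banach's theorem applied on $\mathcal H_c$ produces no fixed point there, and on the full $\mathcal H$ it produces only the single leaf through $v_2=0$. To get a genuine $\varpi$-invariant foliation one must close the argument at the level of a global object that is actually preserved: either the field of tangent hyperplanes (the paper's route via $\Gamma_V$), or the entire family of leaves treated as one element of a suitable function space. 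Your remark about the logarithmic singularity of $\partial_{v_2}(v_2\ln v_2)$ correctly identifies the analytical obstruction, but your proposed fix of passing to $s=\ln v_2$ pushes the invariant slice $\{v_2=0\}$ to $s=-\infty$, so it does not by itself let you attach the foliation to the boundary; the paper's compactifying substitution $s=-(\ln y)^{-1}$, together with the explicit bounds $\|\mu\|=O(\ln y)$ and $\|\partial\mu/\partial(x,y)\|=O(1/y)$, is what allows the vector field defining the leaves to extend $C^1$ to $\{v_2=0\}$.
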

So there is a one-to-one correspondence between the orbits under the return map (\ref{nontruncIntro}) and truncated return map (\ref{truncIntro}).

It turns out that the coefficients $\Omega(r,\phi), \Gamma, b_0(r, \phi), c(r,\phi)$ can be choosen in such a way that the truncated return map is conjugated to the $3$ - dimensional Henon-like map possessing Lorenz-like attractor.

\begin{prop*}
There exist coefficients $\Omega(r,\phi)$, $\Gamma$, $b(r,\phi)$, $c(r,\phi)$, such that the mapping
\begin{equation}\label{}
\begin{aligned}
T: \ \ \left( \begin{array}{c}
z \\
r \\ 
\phi \\ 
\end{array} \right) 
\mapsto
\left( \begin{array}{c}
\Omega(r,\phi) + \Gamma \cdot z \ \ (mod \ 1) \\
b(r,\phi) \\
c(r,\phi) + z \ \ (mod \ 1) \\ 
\end{array} \right) 
\end{aligned}
\end{equation}
has a fixed point with a normal form, in a suitable coordinates, given by:
\begin{equation}\label{}
\begin{aligned}
\left( \begin{array}{c}
x \\
y \\ 
w \\ 
\end{array} \right) 
\mapsto
\left( \begin{array}{c}
y \\
w \\
x + y - w + A \cdot y^2 + B \cdot y \cdot w + C \cdot w^2 + h.o.t.
\end{array} \right) 
\end{aligned}
\end{equation}
with $(C - A) \cdot (A - B + C) > 0$.
Hence, by \cite{LorenzLike1}, \cite{LorenzLike2} and \cite{Shimizu} there exists arbitrarily small perturbation of this mapping possessing a Lorenz-like attractor.
\end{prop*}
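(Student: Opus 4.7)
Since the values of $\Omega, b, c$ at any prescribed point $p^* = (z^*, r^*, \phi^*)$ may be chosen freely, I first set up $\Omega, b, c$ so that $p^*$ is a fixed point of $T$. The Jacobian is
\[
DT(p^*) = \begin{pmatrix} \Gamma & \Omega_r & \Omega_\phi \\ 0 & b_r & b_\phi \\ 1 & c_r & c_\phi \end{pmatrix},
\]
and the target Henon-like map has Jacobian equal to the companion matrix of $(\lambda - 1)(\lambda + 1)^2 = \lambda^3 + \lambda^2 - \lambda - 1$, so the eigenvalues are $\{1, -1, -1\}$ with a single $2 \times 2$ Jordan block for the double eigenvalue $-1$. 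Matching the characteristic polynomial imposes three polynomial equations on the six first partials of $\Omega, b, c$ at $p^*$ (together with the integer parameter $\Gamma \in \mathbb N_2$), leaving three degrees of freedom. For a generic solution in this parameter space the Jordan structure is automatically non-semisimple, and a linear change of coordinates $(z, r, \phi) \to (x, y, w)$ then brings $DT(p^*)$ into the required companion form.

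\textbf{Quadratic normalization.} In the new coordinates one has
\[
T(x, y, w) = (y, w, x + y - w) + Q(x, y, w) + O(|(x,y,w)|^3),
\]
where the components of $Q$ are homogeneous quadratic polynomials whose coefficients depend linearly on the nine second partials of $\Omega, b, c$ at $p^*$. A near-identity quadratic change of variables eliminates non-resonant terms; a direct cohomological computation for the eigenvalue triple $\{1, -1, -1\}$ with one Jordan block shows that the quadratic part in the first two output components can be removed, and the third output component reduced to $x + y - w + A y^2 + B y w + C w^2$ up to a cubic remainder. The resulting coefficients $A, B, C$ are explicit polynomials in the nine second partials of $\Omega, b, c$, and the induced linear map $\mathbb R^9 \to \mathbb R^3$ has full rank, so $(A, B, C)$ may be prescribed arbitrarily. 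In particular one selects it in the open region $(C - A)(A - B + C) > 0$.

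\textbf{Conclusion and main difficulty.} Once $T$ has been conjugated to such a Henon-like map, the existence of an arbitrarily small perturbation admitting a Lorenz-like attractor follows from \cite{LorenzLike1, LorenzLike2, Shimizu}, via approximation by the time-one map of the Shimizu-Morioka flow. I expect the main technical obstacle to lie in the quadratic normalization step: because of the Jordan block the resonance analysis is not purely diagonal, and one must verify both which quadratic monomials in the third component are resonant (and cannot be eliminated) and that the resulting parametrization of $(A, B, C)$ by the second partials of $\Omega, b, c$ is indeed surjective. Both are routine but lengthy linear-algebra computations; once carried out, the openness of the region $(C - A)(A - B + C) > 0$ guarantees that the required coefficient choice exists.
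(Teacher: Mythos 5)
Your proposal takes a genuinely different route from the paper, and the normal-form-reduction step on which it hinges has a real gap.

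The paper's proof is constructive: it fixes explicit polynomial ans\"atze for $\Omega, b, c$ (with a handful of free parameters), passes to the delayed coordinates $x:=\phi$, $y:=z+c(r)$, $w:=\Omega(r,\phi)+\Gamma z+c\big(b(r,\phi)\big)$ (i.e.\ $(x,y,w)=(\phi,\bar\phi,\bar{\bar\phi})$, which automatically yields $T(x,y,w)=(y,w,T_3(x,y,w))$), and then \emph{directly solves for the ansatz parameters} so that the linear part of $T_3$ equals $x+y-w$ and the three mixed partials $(T_3)_{xx}''$, $(T_3)_{xy}''$, $(T_3)_{xw}''$ vanish identically. It then exhibits explicit numerical values with $(C-A)(A-B+C)>0$. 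Your plan instead brings $DT(p^*)$ to companion form by a \emph{linear} change and then invokes a near-identity quadratic normalization to kill the $x$-dependent quadratic monomials of the third component, asserting that the induced map $\mathbb R^9\to\mathbb R^3$ of second partials onto $(A,B,C)$ is onto.

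The gap is that the quadratic homological operator does \emph{not} eliminate arbitrary $x$-dependent quadratics once you insist on preserving the companion structure. Write $L(X,Y,W)=(Y,W,X+Y-W)$ for the linear part. A near-identity change $(x,y,w)=(X+h_1,Y+h_2,W+h_3)$ with $h_i$ quadratic keeps the first two components in the form $\bar X=Y$, $\bar Y=W$ only if $h_2=h_1\circ L$ and $h_3=h_1\circ L^2$; the quadratic correction to the third component is then $\Delta(h_1)=\big(I+L^*-(L^*)^2-(L^*)^3\big)h_1$, where $L^*$ is the pullback on quadratics. Computing the coefficients of $X^2, XY, XW$ in $\Delta(h_1)$ for $h_1\in\{X^2,XY,XW,Y^2,YW,W^2\}$ shows that all six image vectors lie in a two-dimensional subspace of the three-dimensional space of $x$-dependent quadratics (e.g. $\Delta(X^2)+2\,\Delta(XY)+\Delta(XW)=0$ on those coefficients, and $\Delta(Y^2)=\Delta(XW)$). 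So there is a nontrivial obstruction: one cannot in general cancel all $x$-dependent quadratic monomials by a companion-preserving near-identity transformation, and in particular the asserted surjectivity of $(A,B,C)$ does not follow. This is precisely why the paper does not attempt any further normalization after the delayed-coordinate change; instead it imposes the conditions $(T_3)_{xx}''=(T_3)_{xy}''=(T_3)_{xw}''=0$ as equations on the original coefficients of $\Omega, b, c$, which it then solves explicitly. To repair your argument you would need to add those algebraic constraints (or equivalently verify that the obstruction class vanishes under a suitable choice of second partials), which is exactly the content of the paper's computation.
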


Finally we investigate partial hyperbolicity of the truncated return map:
\begin{prop*}\label{coneField}
There exist coefficients $\Omega(r,\phi)$, $\Gamma(r,\phi)$, $b(r,\phi)$, $c(r,\phi)$, such that the mapping $T$ possesses an invariant cone-field 
$$\mathcal{C}_{L, (z,r,\phi)}:= \Big{\{} v= v_1 + v_{23} \in T_{(z,r,\phi)}M \ | \ v_1=(v_{11},0,0), \ v_{23}=(0,v_2,v_3) \ \text{such that } ||v_{23}|| < L\cdot |v_1| \Big{\}}$$ 
where $M := \mathbb R \times [0,1] \times S^1$, $L<1$ and $|| \cdot ||$ denotes the Euclidean norm. 
\end{prop*}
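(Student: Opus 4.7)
The plan is to compute the Jacobian $DT$ explicitly and then choose the coefficient functions so as to force cone-invariance. Writing subscripts for partial derivatives with respect to $r$ and $\phi$, I would first record
\[
DT = \begin{pmatrix} \Gamma & \Omega_r + z\Gamma_r & \Omega_\phi + z\Gamma_\phi \\ 0 & b_r & b_\phi \\ 1 & c_r & c_\phi \end{pmatrix},
\]
and then, for a vector $v=(v_{11},v_2,v_3)\in\mathcal{C}_{L,(z,r,\phi)}$ (so $\|(v_2,v_3)\|<L|v_{11}|$ with $L<1$), decompose $DT\,v$ into its unstable component $V_1$ (the $z$-entry) and its central component $V_{23}$ (the two remaining entries).

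The geometric point controlling the whole argument is that the $\phi$-component of $T$ contains the term $z$ with derivative $1$, so $V_{23}$ inherits a contribution $v_{11}$ with multiplier only $1$, while $V_1$ picks up $\Gamma v_{11}$. Thus even in the idealised case where $\Omega,b,c$ are constants one has $\|V_{23}\|\approx|v_{11}|$ and $|V_1|\approx|\Gamma|\cdot|v_{11}|$, so the required $\|V_{23}\|<L|V_1|$ forces $|\Gamma|>1/L$. Combined with $L<1$ this demands $|\Gamma|>1$, which can be arranged by choosing $\Gamma$ a real constant with $|\Gamma|>1$ and $L\in(1/|\Gamma|,1)$; in particular the natural candidate $\Gamma\in\mathbb{N}_2$ from the preceding theorems is compatible.

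With $\Gamma$ fixed as a constant, I would take $\Omega(r,\phi)$, $b(r,\phi)$, $c(r,\phi)$ with gradients uniformly bounded by some small $\epsilon>0$. Applying Cauchy--Schwarz to each off-diagonal cross-term in $DT\,v$ and using $\|(v_2,v_3)\|<L|v_{11}|$ reduces the cone inclusion $\|V_{23}\|<L|V_1|$ to the single scalar inequality
\[
(\epsilon L)^2 + (1+\epsilon L)^2 < L^2\bigl(|\Gamma|-\epsilon L\bigr)^2,
\]
which at $\epsilon=0$ reads $L|\Gamma|>1$ and therefore persists, by continuity, for all sufficiently small $\epsilon>0$. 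The main anticipated obstacle is the $z$-dependence entering $DT$ through the terms $z\Gamma_r$ and $z\Gamma_\phi$: on the non-compact slab $M=\mathbb{R}\times[0,1]\times S^1$ these would blow up as $|z|\to\infty$ and destroy any uniform estimate, so restricting to $\Gamma=\mathrm{const}$ is not a convenience but essentially forced. Once this is done, every remaining entry of $DT$ is uniformly bounded by a constant depending only on $\epsilon$, and the cone inclusion holds at every point of $M$, yielding the claimed invariant cone-field.
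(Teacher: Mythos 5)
Your proposal is correct and follows essentially the same route as the paper: fix $\Gamma$ constant with $|\Gamma|>1$, compute the Jacobian, use Cauchy--Schwarz to get a lower bound on the $z$-component of $DT\,v$ and an upper bound on the $(r,\phi)$-components, and observe that the resulting scalar inequality holds for small enough gradients of $\Omega,b,c$. The one genuine point of difference is that you explicitly flag why $\Gamma$ must be a constant rather than merely have a small gradient: the Jacobian of $T$ contains $z\,\Gamma_r$ and $z\,\Gamma_\phi$, which are unbounded on the non-compact slab $M=\mathbb R\times[0,1]\times S^1$ and would wreck any uniform cone estimate. The paper's proof simply writes down the Jacobian already with $\Gamma$ constant and does not record those terms, even though the in-text version of the proposition is phrased in terms of ``$\Gamma_r'$, $\Gamma_\phi'$ small enough,'' which would suggest a non-constant $\Gamma$ is allowed. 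Your observation therefore closes a small but real gap in the precision of the hypothesis: the condition really needed is $\Gamma_r'=\Gamma_\phi'\equiv 0$, not merely that these derivatives are small. The remaining quantitative bookkeeping (your single scalar inequality versus the paper's two-step estimate with the auxiliary constant $c_1$) is a cosmetic difference.
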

In consequence, truncated return map is partially hyperbolic and so there exists an invariant foliation with leaves of the form $(r,\phi)=h(z)$. So one can factorize the truncated 
return map along the leaves in order to reduce its study to the $2$-dimensional mapping.

\section{Non-transverse homoclinic channel in the Rock-Scissors-Paper game}\label{examples}
\subsection{Basic facts about scattering maps for autonomous flows}
In this section, as a background to the following ones, we briefly introduce (after \cite{Scatt}) the notion of the scattering map for autonomous flows.
\newline
For our purposes, let us assume that $\Phi$ is a $C^{\infty}$ smooth flow on a manifold $M$ and $\Lambda \subset M$ let be a normally hyperbolic invariant manifold. 
\newline
Firstly let us recall the definitions of the stable and unstable manifolds of $\Lambda$ and of $x \in \Lambda$
$$W^s_{\Lambda} := \Big{\{} y \in M \ | \ d(\Phi_t(y), \Lambda) \leq C_y e^{-\beta \cdot t}, \ t \geq 0  \Big{\}}, \ \ \ \ W^u_{\Lambda} := \Big{\{} y \in M \ | \ d(\Phi_t(y), \Lambda) \leq C_y e^{-\beta \cdot |t|}, \ t \leq 0 \Big{\}} $$
$$W^s_{x} := \Big{\{} y \in M \ | \ d(\Phi_t(y), \Phi_t(x)) \leq C_{x,y} e^{-\beta \cdot t}, \ t \geq 0  \Big{\}}, \ \ \ \ W^u_{x} := \Big{\{} y \in M \ | \ d(\Phi_t(y), \Phi_t(x)) \leq C_{x,y} e^{-\beta \cdot |t|}, \ t \leq 0  \Big{\}} $$
The constant $\beta >0$ is an expansion rate from $\Lambda$.
\newline
Assume that there exists a manifold $\Gamma \subset W^s_{\Lambda} \cap W^u_{\Lambda}$, along which $W^s_{\Lambda}$, $W^u_{\Lambda}$ intersect transversally, i.e. for every $x \in \Gamma$:
$$T_x M = T_x W^s_{\Lambda} + T_x W^u_{\Lambda}, \ \ \ \ \ \ T_x W^s_{\Lambda} \cap T_x W^u_{\Lambda} = T_x \Gamma$$
For the definition of the scattering map, it is also required to assume that $\Gamma$ is transverse to the $W^s_x$, $W^u_x$ foliations, i.e. for every $x \in \Gamma$ we have
$$T_x \Gamma \oplus T_x W^s_{x_+} = T_x W^s_{\Lambda} \ , \ \ \ \ \ \ T_x \Gamma \oplus T_x W^u_{x_-} = T_x W^u_{\Lambda}$$
The wave operator is defined as the mapping:
$$\Omega_{\pm}: W_{\Lambda}^{s,u} \ni x \mapsto x_{\pm} \in \Lambda$$
where $x_{\pm}$ is such that 
$$| \Phi_t(x)- \Phi_t(x_{\pm})| \leq C_{x,x_{\pm}} e^{-\beta \cdot |t|} \text{, \  \ as \ \ \ } t \rightarrow \pm \infty$$ 
We say that $\Gamma$ is a homoclinic channel if intersection of $W^s_{\Lambda}$, $W^u_{\Lambda}$ is transverse along $\Gamma$ and that $\Gamma$ is transverse to the $W^s_x$, $W^u_x$ foliations. Moreover we require from the wave operators 
$$  \Omega_{\pm}^{\Gamma} := \big( {\Omega_{\pm} \big)\big{|}}_{\Gamma} : \Gamma \rightarrow \Omega_{\pm}^{\Gamma} \Big( \Gamma \Big) \subset \Lambda$$ 
to be $C^{l-1}$ diffeomorphisms. Here $l$ is the smoothness of the normally hyperbolic invariant manifold $\Lambda$.
\begin{df}
Assume that $\Gamma$ is a homoclinic channel. The scattering map is defined as
$$\sigma^{\Gamma} := \Omega_{+}^{\Gamma} \circ \Big( \Omega_{-}^{\Gamma} \Big)^{-1}$$
\end{df}

\subsection{Rock-Scissors-Paper game}
Let us recall the basic facts about Rock-Scissors-Paper game. We consider bimatrix game with payoff matrices $(A,B^T)$, where:
\begin{equation*}
\begin{aligned}
A= \left( \begin{array}{ccc}
\epsilon_X & 1 & -1 \\
-1 & \epsilon_X & 1 \\
1 & -1 & \epsilon_X \end{array} \right) 
\ \ \ \
B= \left( \begin{array}{ccc}
\epsilon_Y & 1 & -1 \\
-1 & \epsilon_Y & 1 \\
1 & -1 & \epsilon_Y \end{array} \right) 
\end{aligned}
\end{equation*}
$\epsilon_X,\epsilon_Y \in (-1,1)$ are the rewards for ties.
\newline
Assuming perfect memory of the players $X$, $Y$, the dynamics are governed by the coupled replicator equations (see \cite{Sato3}): 
\begin{equation*}
\begin{cases}
\dot{x_i}=x_i [(Ay)_i-x^T Ay]
\\
\dot{y_j}=y_j[(Bx)_j-y^T Bx]
\end{cases}
\end{equation*}
with $i,j=1,2,3$, and $x_1 + x_2 + x_3 =1$, $y_1 + y_2 + y_3 =1$, where $x_i, y_i$ denote the probabilities of playing strategy $i$ by players $X$ and $Y$, respectively. We investigate these equations as a system of ODE's in $\mathbb R^4$:
\begin{equation*}
\begin{cases}
\dot{x_1}=x_1 [(Ay)_1-x^T Ay] 
\\ 
\dot{x_2}=x_2 [(Ay)_2-x^T Ay]
\\ 
\dot{y_1}=y_1[(Bx)_1-y^T Bx] 
\\
\dot{y_2}=y_2[(Bx)_2-y^T Bx]
\end{cases}
\end{equation*}
with substitutions $x_3=1-x_1-x_2, y_3=1-y_1-y_2$ and constraints $x_1,x_2 \geq 0, x_1+x_2 \leq 1$, $y_1,y_2 \geq 0, y_1+y_2 \leq 1$.
\begin{rmq}
If each of the players is restricted to only two strategies, then either all orbits are periodic and spiral around one of the $6$ equilibria or they converge to one of the draw equilibrium $(R,R),(P,P),(S,S)$ with time tending to $\pm \infty$. The latter case occurs for the system restricted to the subspaces $\{x_i=0, \ y_i=0 \}$, $i=1,2,3$. 
\end{rmq}
The following was already investigated in \cite{Olszowiec1}:
\begin{prop}\label{6addEq}
There are 6 equilibria $Z^a,Z^b,Z^c,Z^d,Z^e,Z^f$ of (1,2,1) type (1-dim. stable manifold, 2-dim. center manifold, 1-dim. unstable manifold), which are centers within the 2 dimensional invariant subspaces $H^a,H^b,H^c,H^d,H^e,H^f$ (where the system is integrable), and have one dimensional stable and unstable manifolds $W^s_a,W^u_a,W^s_b,W^u_b$, $W^s_c,W^u_c,W^s_d,W^u_d,W^s_e,W^u_e,W^s_f,W^u_f$: 
\newline
$$Z^a= \Big(0, \frac{2}{3-\epsilon_Y},\frac{1+\epsilon_X}{3+\epsilon_X},\frac{2}{3+\epsilon_X}\Big), H^a= \{x_1=0,y_3=0 \}, W^s_a \subset \{ x_1=0 \}, W^u_a \subset \{ y_3=0 \}$$

$$Z^b= \Big( \ 0, \ \frac{1+\epsilon_Y}{3+\epsilon_Y}, \ \frac{1-\epsilon_X}{3-\epsilon_X}, \ 0 \ \Big), \  H^b =\{x_1=0,y_2=0\}, \  W^s_b \subset \{ y_2=0 \}, \ W^u_b \subset \{ x_1=0 \}$$  

$$Z^c= \Big( \ \frac{1-\epsilon_Y}{3-\epsilon_Y}, \ 0 , \ 0, \ \frac{1+\epsilon_X}{3+\epsilon_X} \Big), \  H^c= \{x_2=0,y_1=0 \}, \ W^s_c \subset \{ x_2=0 \}, \ W^u_c \subset \{ y_1=0 \}$$

$$Z^d= \Big(\frac{2}{3+\epsilon_Y}, 0,\frac{2}{3-\epsilon_X},\frac{1-\epsilon_X}{3-\epsilon_X}\Big), H^d = \{ x_2=0,y_3=0 \}, W^s_d \subset \{ y_3=0 \}, W^u_d \subset \{ x_2=0 \}$$

$$Z^e= \Big(\frac{1+\epsilon_Y}{3+\epsilon_Y}, \frac{2}{3+\epsilon_Y},0,\frac{2}{3-\epsilon_X}\Big), H^e = \{x_3=0,y_1=0\}, W^s_e \subset \{ y_1=0 \}, W^u_e \subset \{ x_3=0 \}$$

$$Z^f= \Big( \frac{2}{3-\epsilon_Y}, \frac{1-\epsilon_Y}{3-\epsilon_Y},\frac{2}{3+\epsilon_X},0 \Big), H^f = \{x_3=0,y_2=0 \}, W^s_f \subset \{ x_3=0 \}, W^u_f \subset \{ y_2=0 \}$$
\end{prop}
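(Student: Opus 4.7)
\textbf{Plan of proof for Proposition \ref{6addEq}.}
My plan is to exploit the cyclic symmetry of the Rock-Scissors-Paper payoff matrices, so that it suffices to analyze one equilibrium (say $Z^a$) in detail, after which the other five follow by the obvious permutation of the strategy labels. First I would check that $Z^a$ is indeed an equilibrium by direct substitution into the four-dimensional replicator system, using the identities $x^T A y = \epsilon_X - \tfrac{\epsilon_X(\epsilon_X-1)}{3+\epsilon_X}$ and an analogous one for $y^T B x$ that appear after collecting terms. This is routine algebra.

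Next I would establish the invariance of the codimension-one hyperplanes $\{x_i=0\}$ and $\{y_j=0\}$, which follows immediately from the replicator structure $\dot{x}_i = x_i[\ldots]$ and $\dot{y}_j = y_j[\ldots]$: if a coordinate vanishes, so does its time derivative. The two-dimensional subspaces $H^a,\ldots,H^f$ are intersections of two such hyperplanes and are therefore invariant. On $H^a = \{x_1=0,\, y_3=0\}$ the system reduces to a two-strategy bimatrix replicator flow, which is well known to be Hamiltonian with respect to the Shahshahani metric and conserves an integral of the form
\begin{equation*}
F(x_2,y_1,y_2) = \alpha \ln x_2 + \beta \ln(1-x_2) + \gamma \ln y_1 + \delta \ln y_2 + \eta \ln(1-y_1-y_2),
\end{equation*}
with constants $\alpha,\beta,\gamma,\delta,\eta$ depending on $\epsilon_X,\epsilon_Y$; this gives the integrability inside $H^a$ and forces $Z^a$ to be a center within $H^a$.

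The core of the argument is the eigenvalue analysis of the Jacobian $J$ at $Z^a$ in $\mathbb{R}^4$. Because $H^a$ is invariant, the tangent space to $H^a$ at $Z^a$ is $J$-invariant, so $J$ has a block upper-triangular form in coordinates adapted to $H^a$ and its transversal directions $x_1,\, y_3$. The restriction to the transversal directions is itself diagonal in the $(x_1,y_3)$ coordinates (since $\dot{x}_1 = x_1[\ldots]$ depends on $x_1$ only through the prefactor and similarly for $y_3$), with entries $\lambda^s = (Ay)_1 - x^TAy$ and $\lambda^u = (Bx)_3 - y^TBx$ evaluated at $Z^a$. I would then verify that these two entries are real, nonzero and of opposite sign (the one in $\{x_1=0\}$ negative, the one in $\{y_3=0\}$ positive), so that $W^s_a \subset \{x_1=0\}$ and $W^u_a \subset \{y_3=0\}$ by the stable/unstable manifold theorem applied inside these invariant subspaces. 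The remaining $2\times 2$ block tangent to $H^a$ has purely imaginary eigenvalues because the flow on $H^a$ is Hamiltonian and $Z^a$ is a nondegenerate extremum of the integral $F$. This gives the $(1,2,1)$ type.

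The main obstacle I anticipate is the \emph{sign} computation for $\lambda^s$ and $\lambda^u$ in terms of the parameters $\epsilon_X,\epsilon_Y \in (-1,1)$. A direct evaluation gives rational expressions in $\epsilon_X,\epsilon_Y$ whose positivity has to be checked on the whole parameter square; here the cyclic $R\to S\to P$ symmetry of the payoff matrices is crucial, because it implies that the signs of $\lambda^{s,u}$ at the six equilibria differ only by relabelling, so the sign check reduces to the single case of $Z^a$. Once this is done, the localization $W^s_a\subset\{x_1=0\}$, $W^u_a\subset\{y_3=0\}$ (and analogously for the other five equilibria) follows, completing the proof.
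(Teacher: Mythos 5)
The paper itself does not prove this proposition---it is stated with the remark ``The following was already investigated in \cite{Olszowiec1}''---so there is no internal proof to compare against; I am therefore judging your outline on its own terms. Your overall plan (invariance of $\{x_i=0\}$ and $\{y_j=0\}$ from the replicator prefactor structure, integrability of the induced $2\times 2$ bimatrix replicator on $H^a$ to get the center, and a diagonal Jacobian in the transversal $(x_1,y_3)$ directions) is sound and leads to the right answer. There are, however, two slips you should correct. First, you have swapped the labels of the transversal eigenvalues: at $Z^a$ a direct computation gives $(Ay)_1 - x^TAy = \frac{\epsilon_X^2+3}{3+\epsilon_X}>0$ and $(Bx)_3 - y^TBx = \frac{-3-\epsilon_Y^2}{3-\epsilon_Y}<0$ for all $\epsilon_X,\epsilon_Y\in(-1,1)$, so $(Ay)_1-x^TAy$ is the \emph{unstable} eigenvalue (eigendirection $\partial_{x_1}$, which lies in $\{y_3=0\}$, giving $W^u_a\subset\{y_3=0\}$) and $(Bx)_3-y^TBx$ is the \emph{stable} one (eigendirection $\partial_{y_3}$, lying in $\{x_1=0\}$, giving $W^s_a\subset\{x_1=0\}$). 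Your sign claim ``the one in $\{x_1=0\}$ negative, the one in $\{y_3=0\}$ positive'' and your stated conclusion are both correct, but the formulas assigned to $\lambda^s,\lambda^u$ should be interchanged to match.

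Second, the $\mathbb{Z}_3$ cyclic symmetry ($1\to2\to3\to1$ in both players' strategy labels) produces orbits of size $3$, not $6$: it cycles $\{H^a,H^c,H^f\}$ among themselves and $\{H^b,H^d,H^e\}$ among themselves. Since the game is not invariant under swapping players when $\epsilon_X\neq\epsilon_Y$, you cannot reduce everything to the single base point $Z^a$; you need two base cases, one from each $\mathbb{Z}_3$-orbit (the paper later explicitly reduces to $H^b$ and $H^c$). Finally, as a minor notational point, the first integral you write down on $H^a$ contains a term $\ln(1-y_1-y_2)=\ln y_3$, which is identically $-\infty$ there; on $H^a$ the constraint $y_1+y_2=1$ should already have been imposed, so the integral should be written in two free variables, e.g.\ $\alpha\ln x_2 + \beta\ln(1-x_2) + \gamma\ln y_1 + \delta\ln(1-y_1)+\text{(linear terms)}$.
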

\begin{rmq}
On the boundary of codimension 1, the orbits travel from one invariant hyperplane $H^{\sigma}$ 
\newline
(with $\sigma \in \{a, \ b, \ c, \ d, \ e, \ f \}$) of dimension 2 to another one.
\end{rmq}
The following observation is based on the numerical simulations performed within the invariant subspaces $\{x_i=0\}$, $\{ y_j=0\}$ and as well in the $4$ dimensional neighbourhood of the subspaces $H^a$,...,$H^f$.
\begin{rmq}\label{non-transverse}
For all $\epsilon_X, \epsilon_Y \in (-1,1)$, the following inclusions hold:
$$W^s(H^a) = W^u(H^b) \subset \{x_1 = 0 \}, \ W^s(H^b) = W^u(H^f) \subset \{y_2 = 0 \}, \ W^s(H^f) = W^u(H^e) \subset \{x_3 = 0 \}$$
$$W^s(H^e) = W^u(H^c) \subset \{y_1 = 0 \}, \ W^s(H^c) = W^u(H^d) \subset \{x_2 = 0 \}, \ W^s(H^d) = W^u(H^a) \subset \{y_3 = 0 \}$$
\end{rmq}


\begin{figure}[!htb]
\minipage{0.23\textwidth}
  \includegraphics[width=\linewidth]{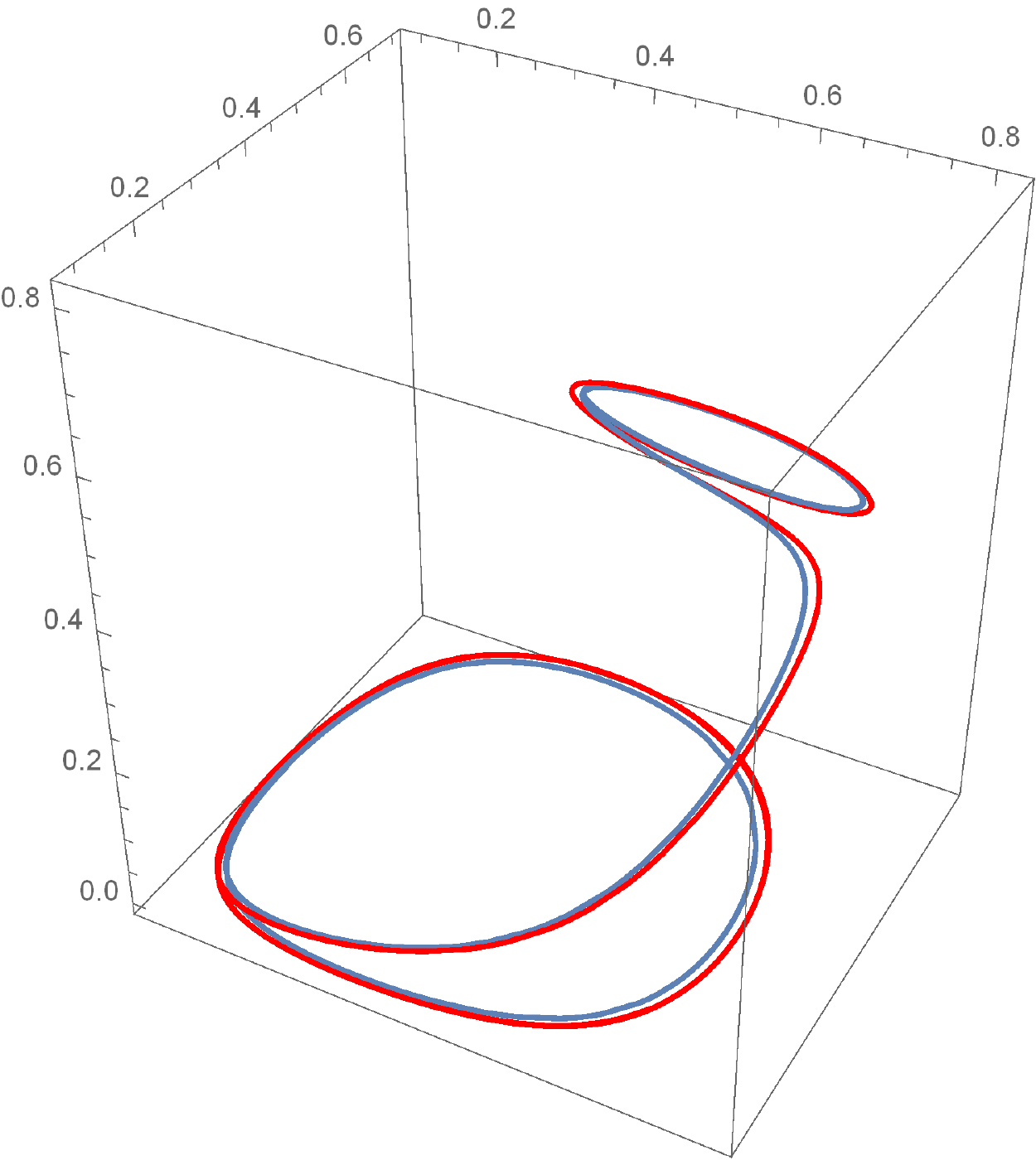}
	{\small (a) $\epsilon_X= -0.09, \ \epsilon_Y= -0.79$}
\endminipage\hfill
\minipage{0.23\textwidth}
  \includegraphics[width=\linewidth]{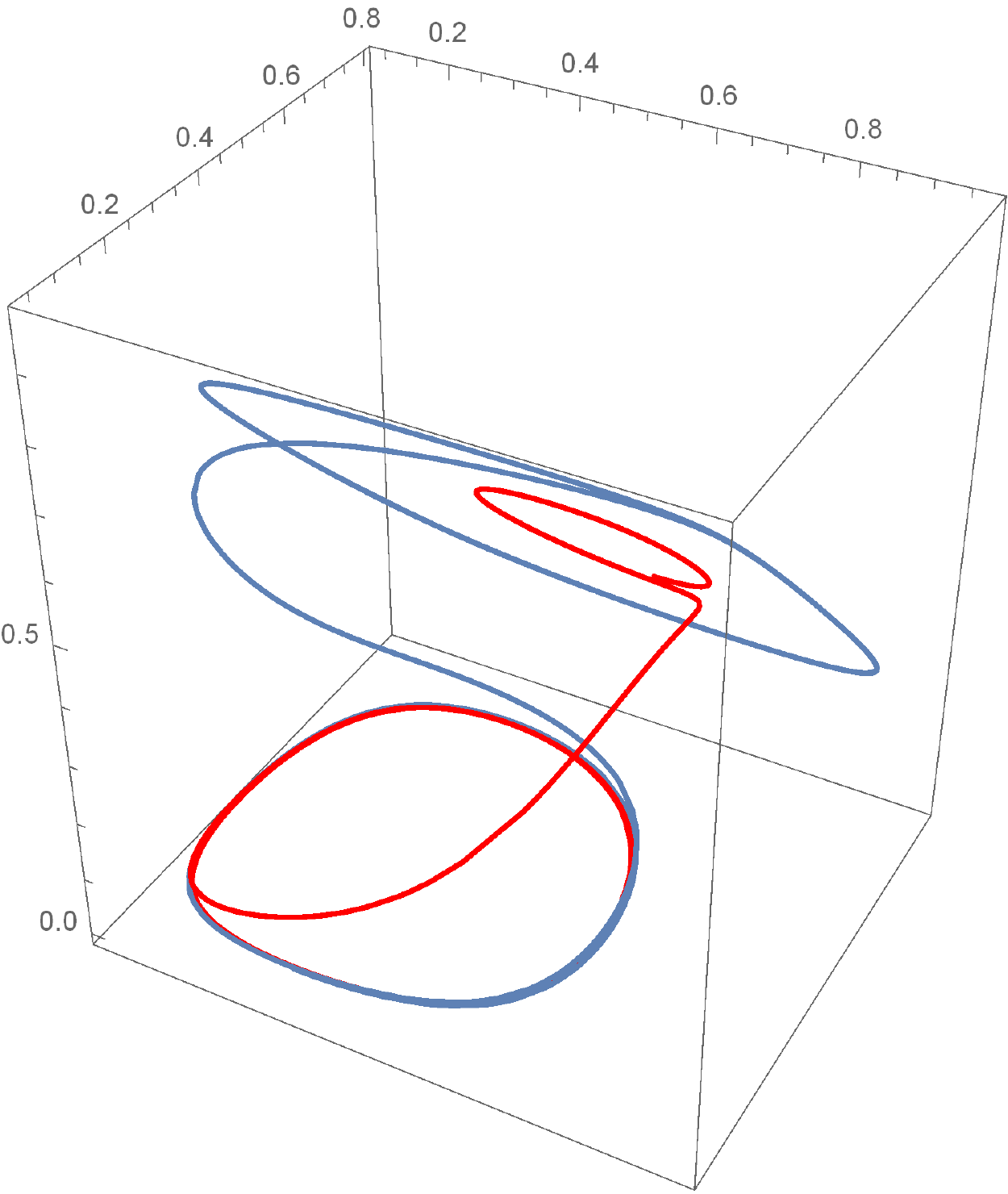}
	{\small (b) $\epsilon_X= 0.81, \ \epsilon_Y= 0.11$}
\endminipage\hfill
\minipage{0.23\textwidth}%
  \includegraphics[width=\linewidth]{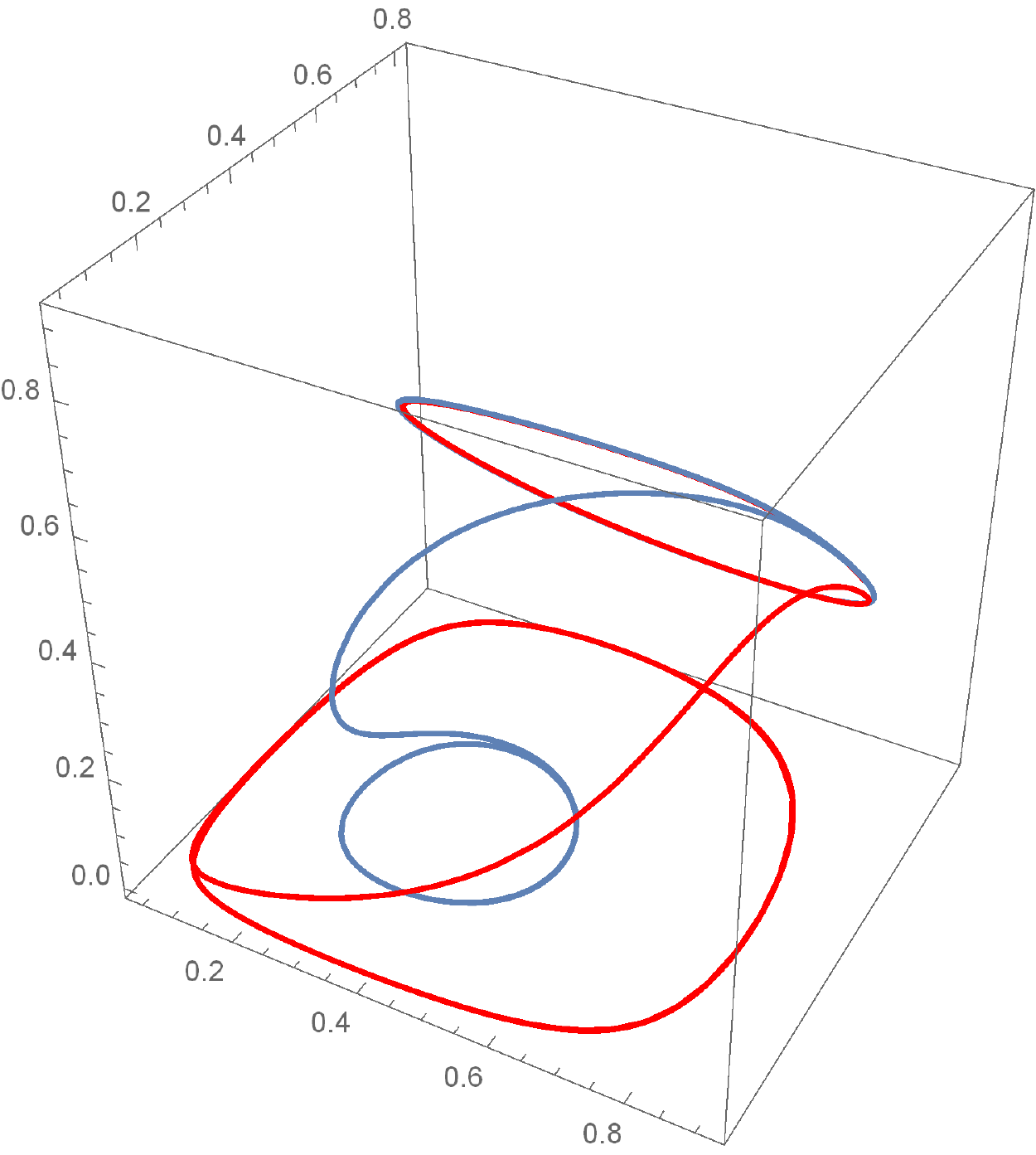}
	{\small (c) $\epsilon_X= 0.41, \ \epsilon_Y= 0.81$}
\endminipage\hfill
\minipage{0.23\textwidth}%
  \includegraphics[width=\linewidth]{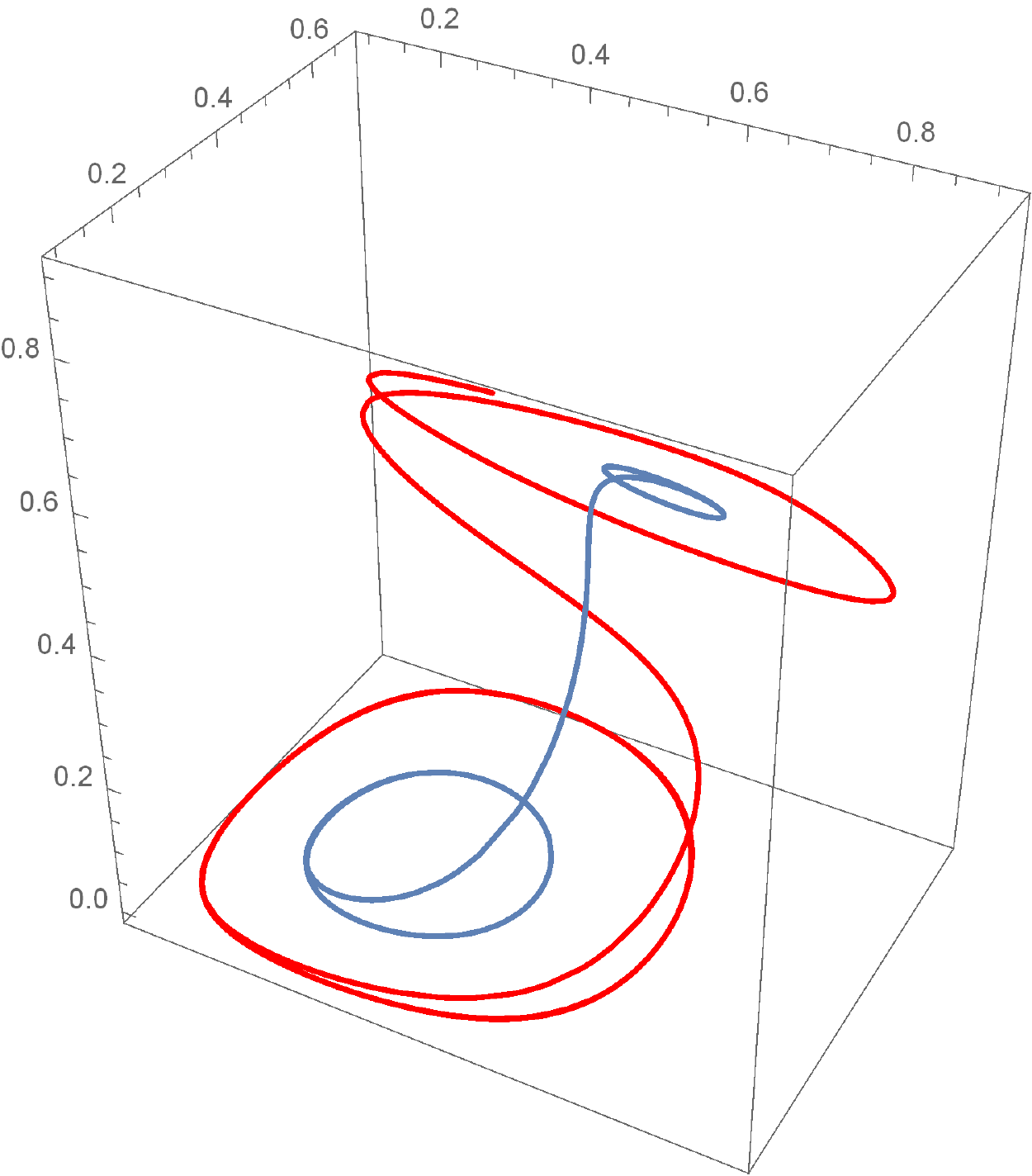}
	{\small (d) $\epsilon_X= -0.79, \ \epsilon_Y= -0.29$}
\endminipage
\caption{Generic types of behaviour of the flow on the boundary $\{ x_1=0 \}$, red curve is a a backward orbit of starting point close to the stable manifold of $Z^a$, blue curve is a forward orbit of a starting point close to the unstable manifold of $Z^b$. The curves stay either (a) close to each other for all times, (b) close near the subspace $\{ y_2=0 \}$, (c) close near $\{ y_3=0 \}$, (d) or are faraway from each other.}
\end{figure}

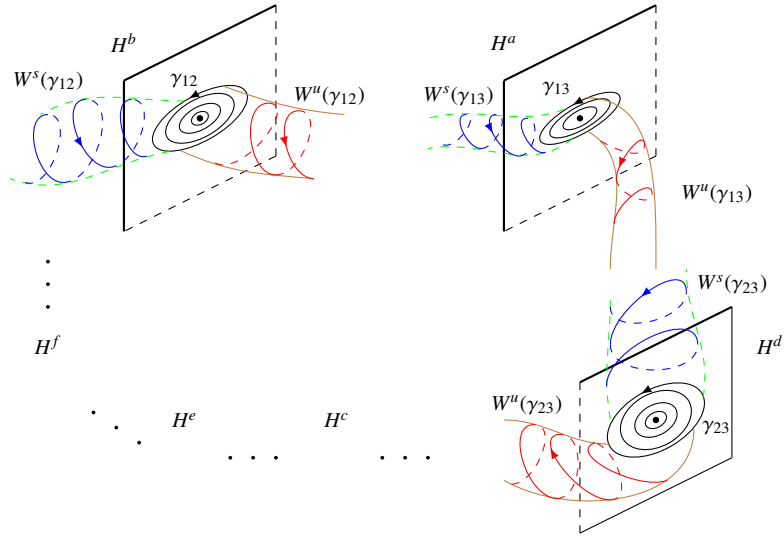
\begin{figure}
\centering
\begin{tikzpicture}[xscale=1,>=latex]
\coordinate (Za) at (-4,4);
\coordinate (Za1) at (-5,2.5);
\coordinate (Za2) at (-5,4.5);
\coordinate (Za3) at (-3,5.5);
\coordinate (Za4) at (-3,3.5);
\coordinate (Za5) at (-3.7,4.43);
\coordinate (Za6) at (-3.2,4.2);
\coordinate (Za7) at (-2.5,4.1);
\coordinate (Za8) at (-2.1,4.05);
\coordinate (Za51) at (-4.25,3.57);
\coordinate (Za61) at (-4.1,3.3);
\coordinate (Za71) at (-3.4,3.2);
\coordinate (Za81) at (-2.5,3.2);
\coordinate (Za9) at (-4.2,4.23);
\coordinate (Za10) at (-3.2,4.2);
\coordinate (Za11) at (-2.5,4.1);
\coordinate (Za12) at (-6.1,4.05);
\coordinate (Za91) at (-4.4,3.56);
\coordinate (Za101) at (-4.1,3.3);
\coordinate (Za111) at (-3.4,3.2);
\coordinate (Za121) at (-6.5,3.2);

\node at (-5,5) {\scriptsize $H^b$};
\filldraw
(Za) circle (1pt);
\node at (-4.2,4.5) {\scriptsize $\gamma_{12}$};
\node at (-6,4.5) {\scriptsize $W^s(\gamma_{12})$};
\node at (-2.3,4.3) {\scriptsize $W^u(\gamma_{12})$};
\draw[rotate=-60,decoration={markings, mark=at position 0.5 with {\arrow{>}}}, postaction={decorate}] (Za) ellipse (0.3 and 0.7);
\draw[thin,rotate=-60] (Za) ellipse (0.25 and 0.5);
\draw[rotate=-60] (Za) ellipse (0.15 and 0.3);
\draw[rotate=-60] (Za) ellipse (0.08 and 0.12);
\draw[thick] (Za1) -- (Za2);
\draw[thick] (Za2) -- (Za3);
\draw[dashed] (Za3) -- (Za4);
\draw[dashed] (Za4) -- (Za1);
\draw[brown] (Za5) to[out=-20,in=175] (Za8);
\draw[brown] (Za51) to[out=-25,in=175] (Za81);
\draw[green,dashed] (Za9) to[out=180,in=30] (Za12);
\draw[green,dashed] (Za91) to[out=190,in=-20] (Za121);
\draw[blue,dashed] (-6.3,3.15) to[out=-20,in=0] (-6.0,4.05);
\draw[blue] (-6.0,4.05) to[out=180,in=180] (-5.8,3.15); 
\draw[blue,dashed] (-5.8,3.15) to[out=0,in=0] (-5.5,4.25);
\draw[blue,decoration={markings, mark=at position 0.63 with {\arrow{>}}}, postaction={decorate}] (-5.5,4.25) to[out=180,in=180] (-5.2, 3.35);
\draw[blue,dashed] (-5.2, 3.35) to[out=0,in=0] (-4.9,4.25);
\draw[blue] (-4.9,4.25) to[out=180,in=200] (-4.7, 3.5); 
\draw[red,dashed] (-3.8,3.41) to[out=-30,in=0] (-3.1,4.2);
\draw[red] (-3.1,4.2) to[out=180,in=180] (-3.0,3.25);%
\draw[red,dashed] (-3.0,3.25) to[out=0,in=0] (-2.7,4.1);
\draw[red,decoration={markings, mark=at position 0.5 with {\arrow{>}}}, postaction={decorate}] (-2.7,4.1) to[out=180,in=180] (-2.5, 3.2); 

\coordinate (Zb) at (1,4);
\coordinate (Zb1) at (0,2.5);
\coordinate (Zb2) at (0,4.5);
\coordinate (Zb3) at (2,5.5);
\coordinate (Zb4) at (2,3.5);
\coordinate (Zb5) at (0.8,4.12);
\coordinate (Zb8) at (-1,4);
\coordinate (Zb51) at (0.9,3.73);
\coordinate (Zb81) at (-1,3.6);
\coordinate (Zb9) at (1.1,4.27);
\coordinate (Zb12) at (2,2);
\coordinate (Zb91) at (1.1,3.83);
\coordinate (Zb121) at (1.4,2);

\node at (0,5) {\scriptsize $H^a$};
\filldraw
(Zb) circle (1pt);
\node at (0.7,4.4) {\scriptsize $\gamma_{13}$};
\node at (-0.6,4.3) {\scriptsize $W^s(\gamma_{13})$};
\node at (2.8,3) {\scriptsize $W^u(\gamma_{13})$};
\draw[rotate=-60,decoration={markings, mark=at position 0.5 with {\arrow{>}}}, postaction={decorate}] (Zb) ellipse (0.2 and 0.6);
\draw[rotate=-60] (Zb) ellipse (0.15 and 0.45);
\draw[rotate=-60] (Zb) ellipse (0.1 and 0.25);
\draw[thick] (Zb1) -- (Zb2);
\draw[thick] (Zb2) -- (Zb3);
\draw[dashed] (Zb3) -- (Zb4);
\draw[dashed] (Zb4) -- (Zb1);
\draw[green,dashed] (Zb5) to[out=200,in=15] (Zb8);
\draw[green,dashed] (Zb51) to[out=220,in=10] (Zb81);
\draw[brown] (Zb9) to[out=0,in=90] (Zb12);
\draw[brown] (Zb91) to[out=-30,in=90] (Zb121);
\draw[blue,dashed] (-0.8,3.65) to[out=-20,in=0] (-0.5,4.05);
\draw[blue] (-0.5,4.05) to[out=180,in=180] (-0.3,3.6); 
\draw[blue,dashed] (-0.3,3.6) to[out=0,in=0] (-0.1,4.05);
\draw[blue,decoration={markings, mark=at position 0.4 with {\arrow{>}}}, postaction={decorate}] (-0.1,4.05) to[out=180,in=180] (0.2, 3.5);
\draw[blue,dashed] (0.2, 3.5) to[out=0,in=0] (0.3,4);
\draw[blue] (0.3,4) to[out=180,in=200] (0.55, 3.55); 
\draw[red,dashed] (1.27,3.7) to[out=-30,in=-90] (1.85,3.65);
\draw[red, decoration={markings, mark=at position 0.8 with {\arrow{>}}}, postaction={decorate}] (1.85,3.65) to[out=90,in=90] (1.48,3.2);%
\draw[red,dashed] (1.48,3.2) to[out=-90,in=-90] (1.95,3.0);
\draw[red] (1.95,3.0) to[out=90,in=90] (1.45,2.6);%

\coordinate (Zc) at (2,0);
\coordinate (Zc1) at (1,-1.5);
\coordinate (Zc2) at (1,0.5);
\coordinate (Zc3) at (3,1.5);
\coordinate (Zc4) at (3,-0.5);
\coordinate (Zc5) at (1.4,0.01);
\coordinate (Zc8) at (1.4,1.6);
\coordinate (Zc51) at (2.62,0.3);
\coordinate (Zc81) at (2.4,2);
\coordinate (Zc9) at (1.4,-0.32);
\coordinate (Zc12) at (0,0);
\coordinate (Zc91) at (2.5,-0.12);
\coordinate (Zc121) at (0,-0.8);

\node at (3.5,1) {\scriptsize $H^d$};
\node at (2.8,-0.1) {\scriptsize $\gamma_{23}$};
\node at (3,1.8) {\scriptsize $W^s(\gamma_{23})$};
\node at (0.3,0.2) {\scriptsize $W^u(\gamma_{23})$};
\filldraw
(Zc) circle (1pt);
\draw[rotate=-60,decoration={markings, mark=at position 0.5 with {\arrow{>}}}, postaction={decorate}] (Zc) ellipse (0.4 and 0.7);
\draw[rotate=-60] (Zc) ellipse (0.35 and 0.5);
\draw[rotate=-60] (Zc) ellipse (0.2 and 0.3);
\draw[rotate=-60] (Zc) ellipse (0.1 and 0.15);
\draw[dashed] (Zc1) -- (Zc2);
\draw[thick] (Zc2) -- (Zc3);
\draw (Zc3) -- (Zc4);
\draw (Zc4) -- (Zc1);
\draw[green,dashed] (Zc5) to[out=100,in=-90] (Zc8);
\draw[green,dashed] (Zc51) to[out=800,in=-90] (Zc81);
\draw[brown] (Zc9) to[out=180,in=0] (Zc12);
\draw[brown] (Zc91) to[out=-90,in=-30] (Zc121);

\draw[blue,dashed] (1.4,1.5) to[out=-90,in=-90] (2.4,1.7);
\draw[blue,decoration={markings, mark=at position 0.5 with {\arrow{>}}}, postaction={decorate}] (2.4,1.7) to[out=90,in=90] (1.4,1); 
\draw[blue,dashed] (1.4,1) to[out=-90,in=-90] (2.55,1);
\draw[blue] (2.55,1) to[out=90,in=90] (1.35,0.45); 
\draw[red,dashed] (0.1,-0.85) to[out=0,in=0] (0.4,-0.06);
\draw[red] (0.4,-0.06) to[out=180,in=180] (0.7,-1.04); 
\draw[red,dashed] (0.7,-1.04) to[out=0,in=0] (0.75,-0.19);
\draw[red,decoration={markings, mark=at position 0.3 with {\arrow{<}}}, postaction={decorate}] (0.75,-0.19) to[out=180,in=180] (1.4,-1.06); 
\draw[red,dashed] (1.4,-1.06) to[out=0,in=-20] (1.2,-0.3);
\draw[red] (1.2,-0.3) to[out=200,in=200] (2.15,-0.8);

\coordinate (C21) at (-2.2,0);
\node at (C21) {\scriptsize $H^c$};
\coordinate (C31) at (-4.2,0);
\node at (C31) {\scriptsize $H^e$};
\coordinate (C32) at (-6,1);
\node at (C32) {\scriptsize $H^f$};
\filldraw
(-1,-0.5) circle (0.5pt);
\filldraw
(-1.3,-0.5) circle (0.5pt);
\filldraw
(-1.6,-0.5) circle (0.5pt);
\filldraw
(-3,-0.5) circle (0.5pt);
\filldraw
(-3.3,-0.5) circle (0.5pt);
\filldraw
(-3.6,-0.5) circle (0.5pt);
\filldraw
(-4.8,-0.3) circle (0.5pt);
\filldraw
(-5.1,-0.1) circle (0.5pt);
\filldraw
(-5.4,0.1) circle (0.5pt);
\filldraw
(-6,1.5) circle (0.5pt);
\filldraw
(-6,1.8) circle (0.5pt);
\filldraw
(-6,2.1) circle (0.5pt);
\end{tikzpicture}
\caption{Six two-dimensional invariant squares described in the propositions (\ref{6addEq}) and (\ref{non-transverse}). Red/blue curves relate to the fibers of the unstable/stable manifolds of the periodic orbits lying within invariant squares, respectively. }
\label{fig:homoclinicChannel}
\end{figure}

\subsection{Dynamics in the neighbourhood of the non-transverse homoclinic channel in the Rock-Scissors-Paper game}\label{RSPsub}
In this subsection we describe the non-transverse homoclinic channel appearing in the Rock-Scissors-Paper game and as well investigate the dynamics in its neighbourhood.
\newline
To start with, note that from the latter remark \ref{non-transverse}, it follows that there exists a homoclinic channel consisting of invariant subspaces $H^a$,...,$H^f$ and its invariant manifolds, which are connected in sequence $H^a \to H^d \to H^c \to  H^e \to  H^f \to  H^b \to  H^a$. Here $H^a \to H^d$ means that $W^u(H^a)=W^s(H^d)$ (analogously for other connections). 
Note that this homoclinic channel, which we denote by $I$, is non-transverse, since corresponding stable/unstable manifolds coincide.
\newline 
We want to investigate the dynamics near the channel $I$. 
\begin{rmq}
Since for any point in $I$, its omega limit set is the subset of one of the invariant subspaces $H^a$,...,$H^f$ (the same holds for alpha limit set), our idea would be to reduce the study of the dynamics in the vicinity of the channel $I$ to the composition of $6$ scattering maps 
$H^a \mapsto H^d$, $H^d \mapsto H^c$, $H^c \mapsto  H^e$, $H^e \mapsto  H^f$, $H^f \mapsto  H^b$, $H^b \mapsto  H^a$. For this purpose, it would be necessary to establish (infinite) shadowing of the composition of the scattering maps. Since the homoclinic channel $I$ is non-transverse, we cannot follow the approach presented in \cite{Scatt}, \cite{ShadowingLemma}, \cite{Diffusion1}, \cite{Cov1}. Note however that these $6$ scattering maps are well defined since these six pieces of homoclinic channel are in fact invariant subspaces, either $\{ x_i = 0 \}$ or $ \{y_i = 0 \}$.  Instead, in the remaining part of this section we want to check whether the channel $I$ is locally attracting, i.e. if there exists a neighbourhood of the channel $I$ so that each orbit from this neighbourhood accumulates on $I$.
\end{rmq}
Note that due to the $\mathbb Z _3$ symmetry of the Rock-Scissors-Paper model it suffices to investigate only subspaces $H^b = \{ x_1=0, \ y_2 = 0 \}$ and $H^c = \{ x_2=0, \ y_1 = 0 \}$. 
\newline
For $H^b$, we consider the $(x_1,y_2)$ part of the vector field of the original system 
\begin{equation*}
\begin{cases}
\dot{x_1}=x_1 [(Ay)_1-x^T Ay] 
\\ 
\dot{x_2}=x_2 [(Ay)_2-x^T Ay]
\\ 
\dot{y_1}=y_1[(Bx)_1-y^T Bx] 
\\
\dot{y_2}=y_2[(Bx)_2-y^T Bx]
\end{cases}
\end{equation*}
with substitutions $x_3=1-x_1-x_2, y_3=1-y_1-y_2$ and constraints $x_1,x_2 \geq 0, x_1+x_2 \leq 1$, $y_1,y_2 \geq 0, y_1+y_2 \leq 1$.
\newline
That is  
\begin{equation*}
\begin{cases}
\dot{x_1}=x_1 [(Ay)_1-x^T Ay] 
\\
\dot{y_2}=y_2[(Bx)_2-y^T Bx]
\end{cases}
\end{equation*}
and corresponding linearized vector field at $x_1 = 0, \ y_2=0$, with $x_3=1-x_2, y_3=1-y_1$.
\begin{equation*}
V(x_2,y_1)= \left( \begin{array}{c}
V_1(x_2,y_1) \\
V_2(x_2,y_1) 
\end{array} \right)
:=\left( \begin{array}{c}
(Ay)_1-x^T Ay \\
(Bx)_2-y^T Bx 
\end{array} \right) 
\end{equation*}
If $\gamma$ is a periodic solution of the (original) system reduced to the subspace $H^b$ (\ref{x2y1RSP} below), with substitutions $x_3=1-x_2, y_3=1-y_1$.
\begin{equation}\label{x2y1RSP}
\begin{cases}
\dot{x_2}=x_2 [(Ay)_2-x^T Ay]
\\ 
\dot{y_1}=y_1[(Bx)_1-y^T Bx] 
\end{cases}
\end{equation}
then the rate of attraction/repulsion to/from the periodic orbit $\gamma \subset H^b$ is a number:
$$\int _{\gamma}{\big(  V_1(x_2,y_1) + V_2(x_2,y_1) \big)} = \int _{\gamma}{\big( (Ay)_1-x^T Ay + (Bx)_2-y^T Bx \big)}$$
The corresponding rates for hyperplane $H^c$ can be computed analogously.

\begin{figure}[!htb]
\minipage{0.40\textwidth}
  \includegraphics[width=\linewidth]{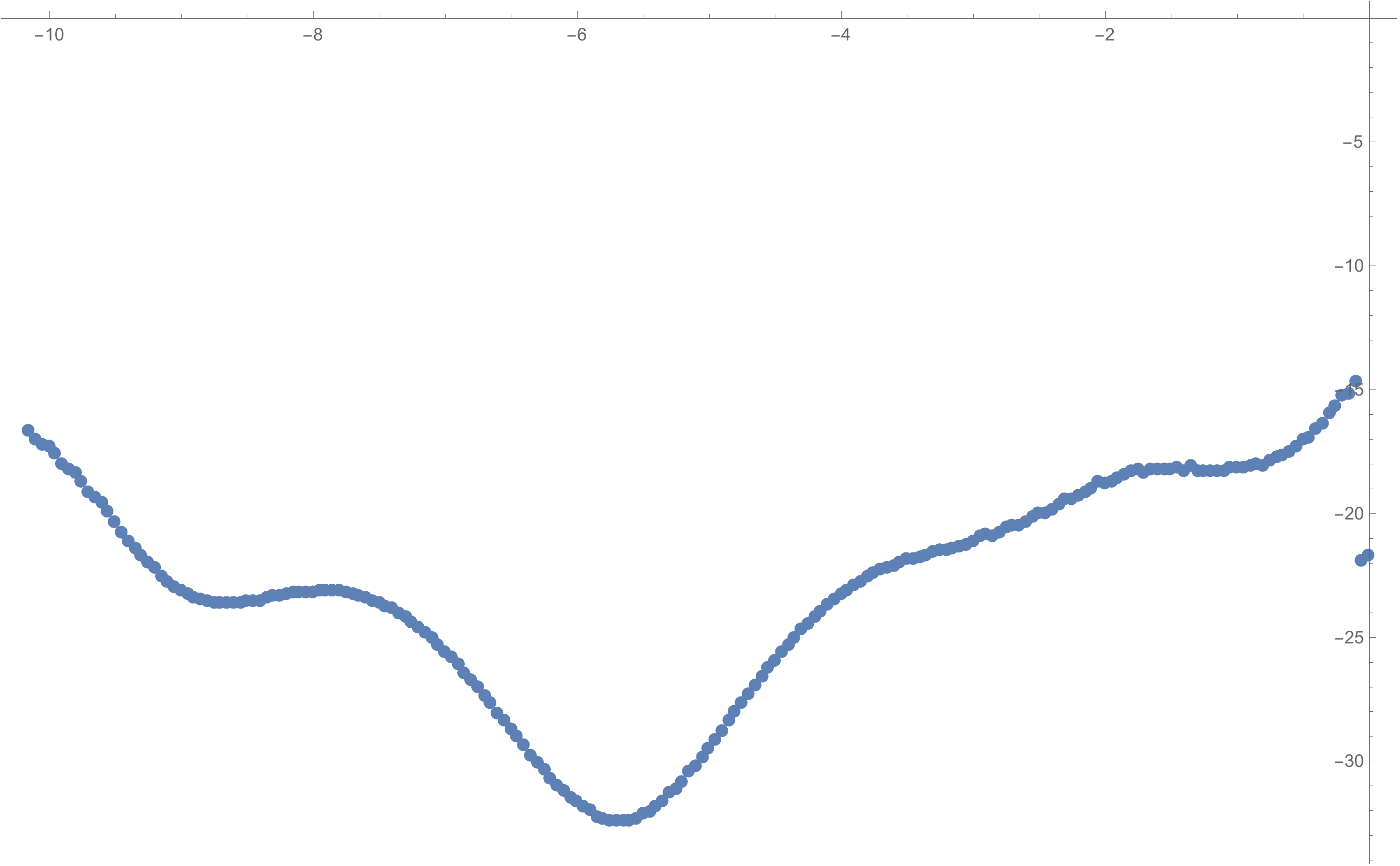}
	{\tiny (a)	}
\endminipage\hfill
\minipage{0.40\textwidth}
  \includegraphics[width=\linewidth]{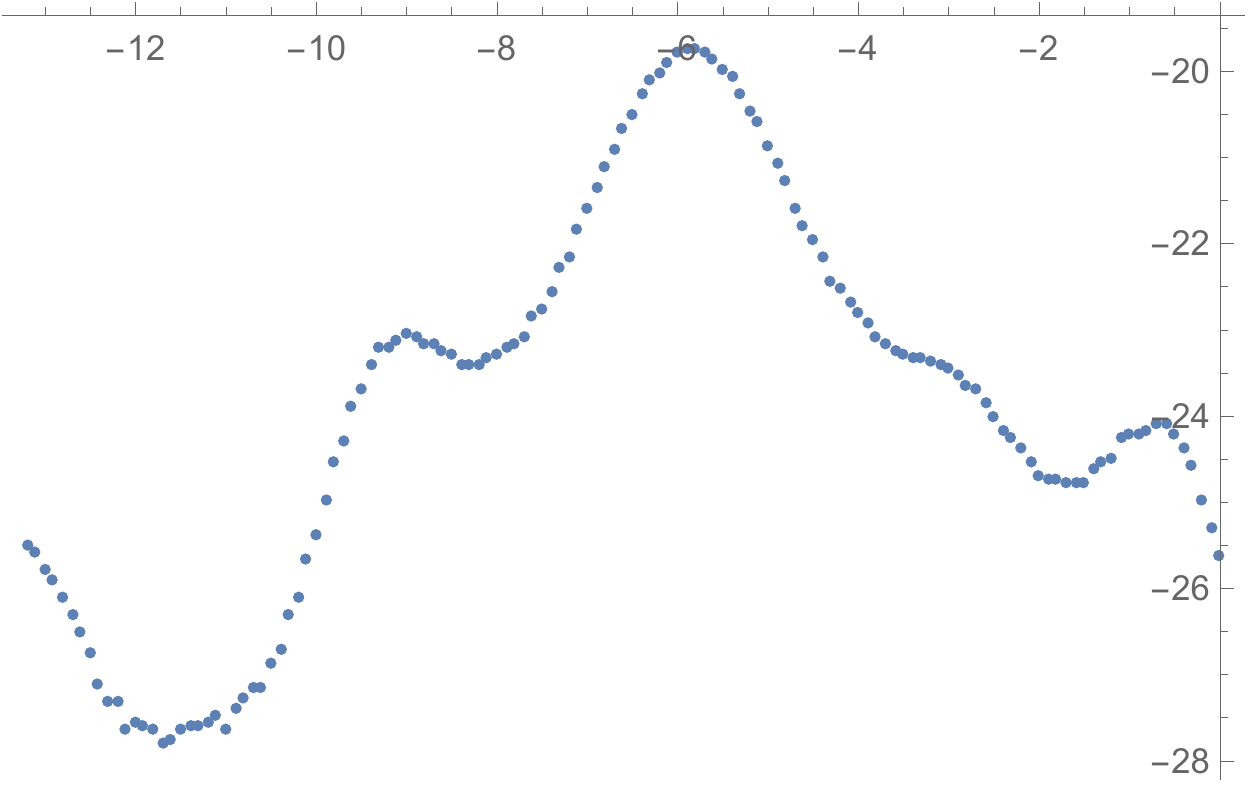}
	{\tiny (b)	}
\endminipage\hfill
\caption{Image of two different periodic orbits $\gamma_1, \gamma_2$ from $H^b$ under the same scattering map $H^b \rightarrow H^a$. 
The horizontal axis parameterizes the time, ranging from $0$ to the period of the orbit, and the vertical axis parameterizes periodic orbits in $H^a$.
\newline
This figure suggests that the unstable manifold  of $\gamma_1$ and of $\gamma_2$ intersect transversally (within the invariant subspace $\{ x_1 =0\}$) stable manifolds of the nontrivial family of periodic orbits from $H^a$.
}
\label{fig:scattMapImage}
\end{figure}

\begin{figure}
\centering
\includegraphics[width=120mm]{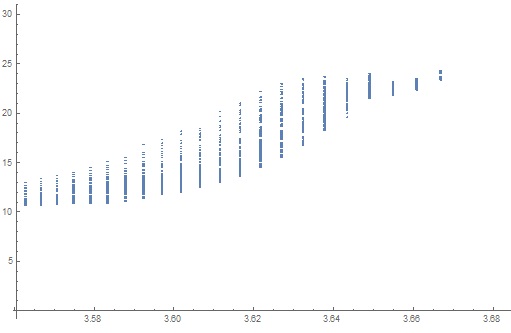}
\caption{Picture presenting a joint image of periodic orbits under the scattering map. Horizontal axis parameterizes (by energy function) periodic orbits in $H^b$, vertical axis parameterizes periodic orbits in $H^a$}
\label{fig:blender1}
\end{figure}

\begin{figure}[!htb]
\minipage{0.45\textwidth}
  \includegraphics[width=\linewidth]{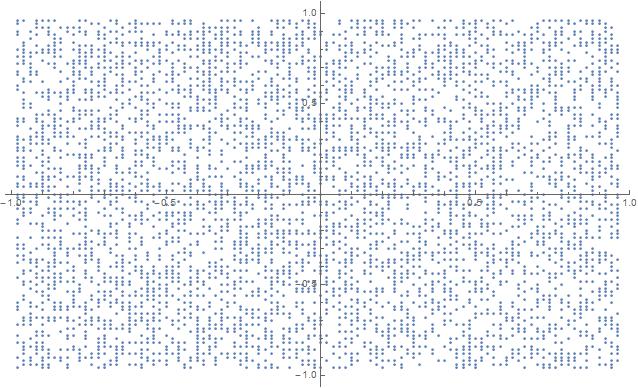}
	{\small (a)  Parameter values for which at least 1\% of the points drawn close to the one of the center manifolds visit in sequence neighbourhoods of 60 consecutive center manifolds. 
		\newline
		}
\endminipage\hfill
\minipage{0.45\textwidth}
  \includegraphics[width=\linewidth]{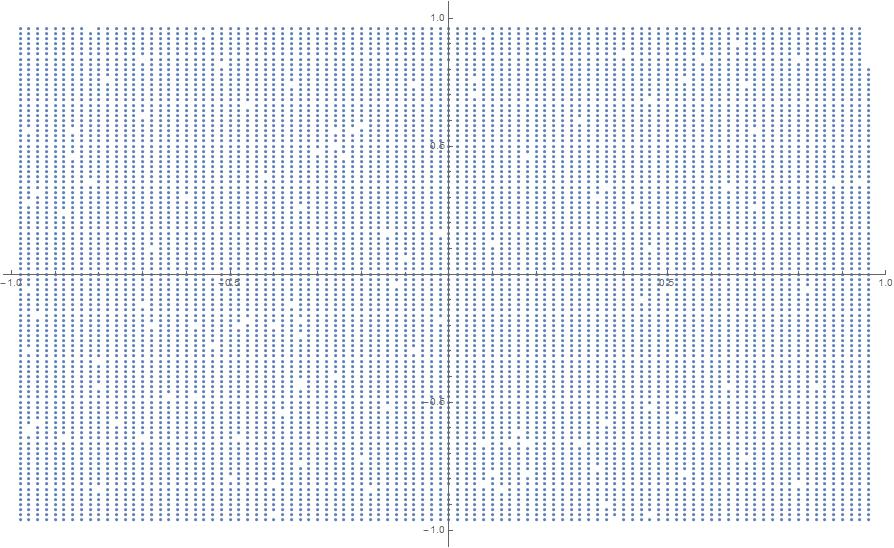}
	{\small (b)  Parameter values for which at least 1\% of the points drawn close to the one of the center manifolds visit in sequence neighbourhoods of 12 consecutive center manifolds, i.e. comes  back two times to the neighbourhood of the initial center manifold. 
	}
\endminipage\hfill
\minipage{0.45\textwidth}%
  \includegraphics[width=\linewidth]{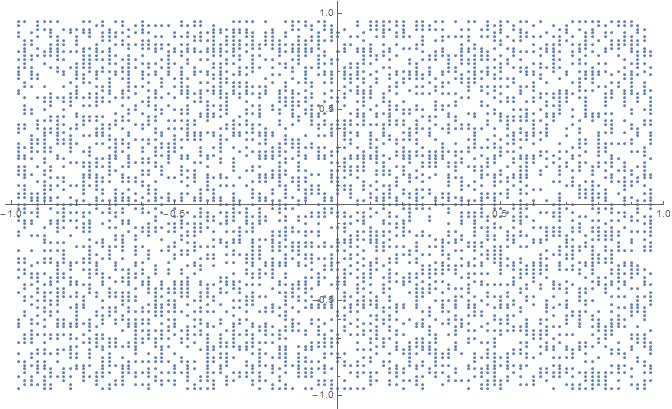}
	{\small (c)  Parameter values for which at least 10\% of the points drawn close to the one of the center manifolds visit in sequence neighbourhoods of 6 consecutive center manifolds, i.e. comes back to the neighbourhood of the initial center manifold.  }
\endminipage\hfill
\minipage{0.45\textwidth}%
  \includegraphics[width=\linewidth]{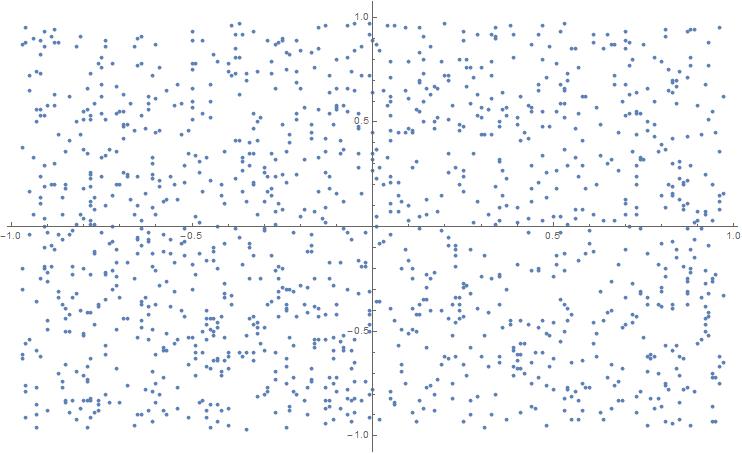}
	{\small (d)  Parameter values for which at least 25\% of the points drawn close to the one of the center manifolds visit in sequence neighbourhoods of 6 consecutive center manifolds, i.e. comes back to the neighbourhood of the initial center manifold. }
\endminipage\hfill
\caption{Numerical investigation of the repulsion from the homoclinic channel in the Rock-Scissors-Paper game}
\label{fig:RSPabcd}
\end{figure}

\begin{rmq}
For all parameter values $\epsilon_X, \epsilon_Y$, for which we computed these rates of attraction/repulsion to/from periodic orbits, we do not find any reason for the total sum of these rates, for the whole loop consisting of six hyperplanes $H^a$, $H^b$, $H^c$, $H^d$, $H^e$, $H^f$, to be negative. This explains the figures (\ref{fig:RSPabcd}), where we investigate numerically shadowing of composition of $6$ scattering maps corresponding to $6$ pieces of the non-transverse homoclinic channel. 
\end{rmq}
In conclusion, in finite time the orbit of an initial point being close to the unstable manifold (but not belonging to it) of any of these invariant $2$ dimensional hyperplanes ($H^a$,...,$H^f$), will eventually stop shadowing the sequence of hyperplanes and leave the neighbourhood of this non-transverse homoclinic channel $I$. 
\begin{cor}
The homoclinic channel $I$ is repelling and so the dynamics near it cannot be fully described by the composition of the aforementioned 6 scattering maps.
\end{cor}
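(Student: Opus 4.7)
The plan is to quantify the normal linear rate of attraction/repulsion of the $6$-loop $H^a\to H^d\to H^c\to H^e\to H^f\to H^b\to H^a$ and show that it is never negative, which is the obstruction preventing the channel $I$ from being attracting. Since each piece $H^\sigma$ is a $2$-dimensional invariant square foliated by periodic orbits $\gamma$, the two directions transverse to $H^\sigma$ are exactly the locally unstable and locally stable ones; by the formula recorded in the main text, the net transverse Lyapunov exponent along a periodic orbit $\gamma\subset H^\sigma$ is
$$\Lambda(\gamma,\sigma)\;=\;\int_\gamma\bigl(V_1^\sigma(x,y)+V_2^\sigma(x,y)\bigr)\,\mathrm{d}t,$$
where $V_1^\sigma,V_2^\sigma$ are the two components of the vector field transverse to $H^\sigma$ (for $\sigma=b$ this is $V_1=(Ay)_1-x^TAy$, $V_2=(Bx)_2-y^TBx$, and analogously for the other five hyperplanes).

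The first step is to reduce the computation. The $\mathbb Z_3$-symmetry of the RSP system identifies $H^a,H^c,H^e$ and $H^b,H^d,H^f$ as two orbits of three hyperplanes each, so the sum along the full loop reduces to twice the sum of $\Lambda(\gamma_b,b)+\Lambda(\gamma_c,c)$ computed for a pair of periodic orbits matched by the scattering maps. The second step is to evaluate $\Lambda(\gamma_b,b)$ and $\Lambda(\gamma_c,c)$ using the explicit reduced system \eqref{x2y1RSP} (and its analogue in $H^c$), which is integrable on each $H^\sigma$. Because the flow on each $H^\sigma$ has an integral, periodic orbits within $H^\sigma$ are labelled by an energy parameter and one can express $\Lambda$ as a period-integral of an explicit polynomial in $(\epsilon_X,\epsilon_Y)$ and the coordinates on $\gamma$.

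The third step is the comparison with the earlier accumulation theorem. That theorem (for the toy model) shows that a neighbourhood of the homoclinic channel attracts every orbit precisely when the sum of transverse rates is strictly negative; the contrapositive says that if along every periodic orbit of the loop the total rate $\sum_{\sigma}\Lambda(\gamma,\sigma)$ is nonnegative then no such attracting neighbourhood exists, and therefore (since stable/unstable manifolds coincide and the orbit cannot stay trapped forever without accumulating on $I$) some orbits must leave any prescribed neighbourhood of $I$ in finite time. Combining this with the numerical evidence recorded in Remark preceding the Corollary, namely that no parameter value $(\epsilon_X,\epsilon_Y)\in(-1,1)^2$ was found for which the total is negative, and with the explicit confirmation in Figure~\ref{fig:RSPabcd} that orbits drawn near a center manifold fail to shadow long sequences of hyperplanes, one concludes that $I$ is repelling.

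Finally one draws the consequence about scattering maps. Since the scattering maps $H^b\!\to\! H^a$, etc., are well defined (the six pieces sit in codimension-$1$ invariant subspaces), their composition defines a candidate dynamics on a single hyperplane. However, describing the true dynamics of the flow near $I$ by this composition requires an infinite shadowing result, which in the non-transverse setting cannot be obtained by the topological tools of \cite{Scatt,ShadowingLemma,Diffusion1,Cov1}; the repulsion established above shows it fails in a strong quantitative sense, since typical orbits leave the neighbourhood after finitely many returns. The main obstacle in making this argument fully rigorous is the analytic verification that $\sum_\sigma\Lambda(\gamma,\sigma)\ge 0$ for all parameter values and all periodic orbits in the loop; I expect to establish it by turning the period-integrals into rational expressions in $\epsilon_X,\epsilon_Y$ via the integral of motion and then applying an inequality of Schur type, falling back to the numerics of Figure~\ref{fig:RSPabcd} where a closed-form bound is unavailable.
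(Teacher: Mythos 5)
The paper gives no formal proof of this Corollary: it is a summary of the preceding Remark, which records only the \emph{numerical} observation that the loop-total of the transverse rates $\int_\gamma(V_1+V_2)$ is never negative, plus the numerical shadowing experiments of Figure~\ref{fig:RSPabcd}. Your plan follows the same spirit — compute the transverse divergence rates, observe (numerically) that their total along the six-piece loop stays nonnegative, and conclude repulsion and failure of infinite shadowing — and you are commendably explicit about the gap at the crucial positivity step, which the paper also leaves as numerics. So in substance you and the paper are doing the same thing.

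One logical misstep worth flagging, though: you invoke the toy-model accumulation theorem as if it were an equivalence (``attracts \emph{precisely when} the rates sum to a strictly negative value'') and then ``take the contrapositive''. The toy-model theorem as stated is only a sufficient condition for attraction ($\varsigma+p<0 \Rightarrow$ accumulation on $I$); its contrapositive yields ``non-attraction $\Rightarrow$ rates not all negative'', which is the \emph{wrong} direction for your argument. What is actually needed (and what the paper tacitly uses in the sentence immediately after the Corollary, about backward-time attraction) is the \emph{time-reversed} form of the sufficiency statement: strictly positive transverse rates imply that orbits starting off $I$ escape every neighbourhood in finite time — equivalently, accumulation on $I$ under the backward flow. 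That is a separate (if entirely parallel) claim and should be stated as such rather than derived by contraposition; once you phrase it that way, your proposal aligns with the paper's intent.
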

In fact, the calculated ratios of attraction/repulsion to the orbits tell us that when considering the flow backward in time, all of the orbits from some neighbourhood of the channel $I$ accumulate on $I$.
\begin{rmq}
In the section \ref{sec:toyModel} we will consider a toy-model motivated by this example of a non-transverse homoclinic channel $I$ from Rock-Scissors-Paper. We will identify invariant subspaces $H^a$,...,$H^f$ with one center manifold $D$ foliated with periodic orbits and assume that the stable/unstable manifold of this center manifold $D$ coincide. However we will assume that this homoclinic channel is locally attracting, so the toy-model will resemble a homoclinic channel $I$ from the Rock-Scissors-Paper game with reversed time.
\end{rmq}
\subsection{Discussion}
What we were actually interested in, is the form of the scattering maps between the invariant squares $H^a,..., H^f$ and what information they provide about the dynamics near the non-transverse homoclinic channel $I$ consisting of invariant squares and their stable/unstable manifolds (see figure \ref{fig:homoclinicChannel}).
\newline
In the figure (\ref{fig:scattMapImage}), there are two examples of the images of periodic orbits (from invariant squares) under the scattering map. 
\newline
Figure (\ref{fig:blender1}) presents joint image of the family of parameterized periodic orbits from one invariant square under the scattering map. See the captions under the figures for details and explanations.
\newline
However, the conclusion of this section is that although the images of the scattering maps look very complicated, due to lack of the attraction to the homoclinic channel and lack of infinite shadowing of the scattering maps (compare with \cite{Scatt}, \cite{Diffusion1}, \cite{ShadowingLemma} and \cite{Cov1}, where the transversality condition for the homoclinic channel is essential), one cannot reduce the study of the behaviour near this non-transverse homoclinic channel to the study of the scattering maps between the invariant squares.
\newline
All of the numerical computations and simulations were performed with the usage of the tools implemented in CAPD Library \cite{CAPD}.

\section{The toy-model}\label{sec:toyModel}
\subsection{Normal form of the ODE near the center manifold}\label{nf1}
Motivated by the example of a non-transverse homoclinic channel $I$ in the Rock-Scissors-Paper game, we consider in this section the following ODE in $\mathbb R^4$:
\begin{equation}\label{1rown}
\dot{x} = F(x)
\end{equation}
with $F$ - real analytic, $F(0)=0$ and the point spectrum $\sigma(dF(0)) = \{\lambda_1, \lambda_2, \lambda_3, \lambda_4 \}$, where $\lambda_1>0$, $\lambda_2<0$, $|\lambda_3|=1$, $\lambda_3=\bar{\lambda}_4$.
\newline
Assume that the fixed point $x=0$ possesses a $2$-dimensional compact center manifold $W^c(0)$ on which the flow is periodic (i.e. the center manifold is foliated with periodic orbits). After straightening $W^c(0)$, so that it becomes a $2$-dimensional disc $D$ and introducing the polar coordinates $(r,\phi) \in [0,1) \times [0,1)$ on it, we perform the change of the coordinates $(x_1,x_2,x_3,x_4) \mapsto (v_1, v_2, r, \phi)$ so that we can rewrite (\ref{1rown}) in the following form: 
\begin{equation}\label{2rown}
\begin{cases}
\dot{v_1}= f_1(v_1,v_2,r,\phi)
\\
\dot{v_2}= f_2(v_1,v_2,r,\phi)
\\
\dot{r} = f_3(v_1,v_2,r,\phi)
\\
\dot{\phi} = f_4(v_1,v_2,r,\phi)
\end{cases}
\end{equation}
with $\frac{\partial f_1}{\partial v_1} (0,0,0,0)= \lambda_1$, $\frac{\partial f_2}{\partial v_2} (0,0,0,0)= \lambda_2$, $\frac{\partial f_1}{\partial v_2} (0,0,0,0)= \frac{\partial f_2}{\partial v_1} (0,0,0,0)= 0$. 
\newline
Moreover assume that $3$ - dimensional stable and unstable manifolds (i.e. $W^s(D)$, $W^u(D)$) of the center manifold $D=W^c(0)$ coincide.
\begin{rmq}
In particular their intersection is non-transverse and forms a non-transverse homoclinic channel which we denote by $I$. Because of the lack of the transversality condition, $I$ does not fit into the definition of a homoclinic channel from \cite{Scatt}. We emphasize it by calling $I$ a non-transverse homoclinic channel.
\end{rmq}

\begin{figure}
\centering
\begin{tikzpicture}[xscale=1,>=latex]
\coordinate (Za) at (1,-1); 

\node at (-2.4,-2) {\scriptsize $D$};
\node at (0.65,-1.05) {\scriptsize $0$};
\filldraw
(Za) circle (1pt);
\node at (0.5,2.5) {\scriptsize $W^u(D)$};
\node at (0.8,-3.8) {\scriptsize $W^s(D)$};
\draw[dashed,rotate=-70,decoration={markings, mark=at position 0.05 with {\arrow{>}}}, postaction={decorate}] (Za) ellipse (0.3 and 1);
\draw[dashed,rotate=-68,decoration={markings, mark=at position 0.95 with {\arrow{>}}}, postaction={decorate}] (Za) ellipse (0.9 and 1.8);
\draw[rotate=-67,decoration={markings, mark=at position 0.84 with {\arrow{>}}}, postaction={decorate}] (Za) ellipse (1.6 and 3.2);

\draw[brown] (-1.95,-1.5) to[out=80,in=160] (4.25,2.5);
\draw[brown] (4.25,2.5) to[out=-20,in=90] (8.25,0);
\draw[brown] (4,0) to[out=90,in=180] (5.5,1.5);
\draw[brown] (5.5,1.5) to[out=0,in=90] (6.5,0);

\draw[dashed,brown] (-0.7,-1.5) to[out=80,in=170] (4.2,2.2);
\draw[dashed,brown] (2.62,-0.28) to[out=85,in=-160] (4,1.5);
\draw[red,decoration={markings, mark=at position 0.5 with {\arrow{>}}}, postaction={decorate}] (2,1.9) to[out=40,in=-160] (4,1.5);
\draw[red,dashed] (2,1.9) to[out=200,in=190] (3.15,1);
\draw[red,decoration={markings, mark=at position 0.4 with {\arrow{<}}}, postaction={decorate}] (3.15,1) to[out=140,in=-70] (0.6,1);
\draw[red,dashed] (0.6,1) to[out=40,in=120] (2.75,0.3);
\draw[red,decoration={markings, mark=at position 0.2 with {\arrow{<}}}, postaction={decorate}] (2.75,0.3) to[out=-100,in=-40] (-0.4,-0.4);

\draw[green] (-2,-2.1) to[out=-80,in=180] (3.25,-4);
\draw[green] (3.6,-1.07) to[out=-75,in=160] (4.5,-2);
\draw[green] (3.25,-4) to[out=0,in=-90] (8.25,0);
\draw[green] (4.5,-2) to[out=-20,in=-90] (6.5,0);

\draw[dashed,green] (0.05,-1.3) to[out=-80,in=190] (4.4,-3.5);
\draw[dashed,green] (1.9,-0.8) to[out=-85,in=160] (4,-2.5);

\draw[blue,decoration={markings, mark=at position 0.4 with {\arrow{<}}}, postaction={decorate}] (0.25,-1.9) to[out=-60,in=240] (2.45,-1.8);
\draw[blue,dashed] (2.45,-1.8) to[out=160,in=100] (0.8,-2.7);
\draw[blue,decoration={markings, mark=at position 0.4 with {\arrow{<}}}, postaction={decorate}] (0.8,-2.7) to[out=-50,in=-70] (3,-2.15);
\draw[blue,dashed] (3,-2.15) to[out=150,in=120] (2.5,-3.5);
\end{tikzpicture}
\caption{2 - dimensional center manifold $D$, foliated with periodic orbits, with its coinciding stable and unstable manifolds $W^s(D)=W^u(D)$}
\label{fig:toyModel}
\end{figure}
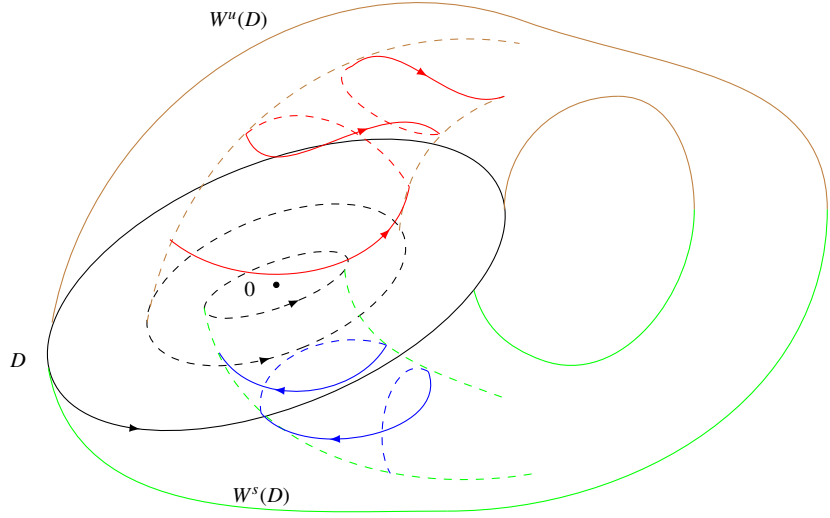
After expanding $f_1$, $f_2$, $f_3$, $f_4$ in terms of $(v_1,v_2)$ in the vicinity of $(0,0)$, for any $(r,\phi) \in [0,1] \times [0,1)$ and straightening local unstable and stable manifolds of $D=W^c(0)$ to be locally the $3$-dimensional tubes $(v_1,0,r,\phi)$ and $(0,v_2,r,\phi)$ we arrive at the simplified system of equations given by the analytic vector field:
\begin{equation}\label{3rown}
\begin{cases}
\dot{v_1}= p(r,\phi) \cdot v_1  + p_{00}(v_1,v_2,r,\phi) \cdot v_1
\\
\dot{v_2}= \varsigma(r,\phi) \cdot v_2 +  \varsigma_{00}(v_1,v_2,r,\phi) \cdot v_2
\\
\dot{r} = r_{00}(r,\phi) + v_1 v_2 \cdot r_{0}(v_1,v_2,r,\phi) 
\\
\dot{\phi} = \omega_{00}(r,\phi) + v_1 v_2 \cdot \omega_{0}(v_1,v_2,r,\phi) 
\end{cases}
\end{equation}
with $r_{00}, \omega_{00}, r_0, \omega_0, p, p_{00}, \varsigma, \varsigma_{00}$ being $1$-periodic with respect to $\phi$.
\begin{ass}
Since $v_1$, $v_2$ are respectively unstable and stable directions, then $p(r,\phi)>0$ and $\varsigma(r,\phi)<0$, for all $r$, $phi$. Moreover we assume that $p(r,\phi) + \varsigma(r,\phi)<0$, which will eventually result in channel $I$ being locally attracting, i.e. orbits starting near the channel $I$ will accumulate on $I$.
\end{ass}
Until the end of this section we will perform analytic transformations of the coordinates leading us to the analytic and convergent normal form of the ODE describing this model near the center manifold:
\begin{equation}\label{normFormODE}
\begin{cases}
\dot{v_1}= p(r) \cdot v_1  + p_0(v_1,v_2,r,\phi) \cdot {v_1}^2 v_2
\\
\dot{v_2}= \varsigma(r) \cdot v_2 +  \varsigma_0(v_1,v_2,r,\phi) \cdot v_1 {v_2}^2
\\
\dot{r} = v_1 v_2 \cdot r_{0}(v_1,v_2,r,\phi) 
\\
\dot{\phi} = \omega(r) + v_1 v_2 \cdot \omega_{0}(v_1,v_2,r,\phi) 
\end{cases}
\end{equation}
with $p_0$, $\varsigma_0$, $r_0$, $\omega_0$ being $1$ - periodic in $\phi$.
\newline
We start with simplifications of $(r,\phi)$ coordinates in (\ref{3rown}), for this we need the following lemma about the action-angle coordinates in the integrable Hamiltonian system:
\begin{lemma}\label{coordDisc}
Under the above assumptions, there exists an analytic and convergent change of coordinates that transforms system
\begin{equation}\label{}
\begin{cases}
\dot{r} = r_{00}(r,\phi) 
\\
\dot{\phi} = \omega_{00}(r,\phi) 
\end{cases}
\end{equation}
into the action-angle coordinates
\begin{equation}\label{}
\begin{cases}
\dot{\xi} = 0 
\\
\dot{\zeta} = \omega(\xi) 
\end{cases}
\end{equation}
with $0 \leq r < 1$, $0  \leq \xi <  1 $ and $\omega(r):= \int_{0}^{1}{\omega_{00}(r, \phi') d \phi'}$
\end{lemma}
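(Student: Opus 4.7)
The plan is to establish the existence of analytic action--angle coordinates for the planar vector field $X = r_{00}\,\partial_r + \omega_{00}\,\partial_\phi$ on the disc $D$, exploiting the hypothesis that every orbit in $D$ is periodic together with the real analyticity of $r_{00}$ and $\omega_{00}$ inherited from $F$. I would build the new coordinates in two pieces: first the action $\xi$ as an analytic first integral, then the angle $\zeta$ as a normalised time parameter along each orbit. Analyticity and convergence are to be controlled via analytic dependence of the flow on initial data.

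First I would fix a transverse section $\Sigma := \{\phi = 0\} \cap D$. Since every orbit is periodic and $\omega_{00} > 0$ away from the central fixed point, every orbit meets $\Sigma$ transversally in exactly one point and the Poincar\'e return map $P:\Sigma \to \Sigma$ coincides with the identity. Parameterise the orbits by an analytic coordinate on $\Sigma$, for concreteness $\xi := r|_\Sigma$, and extend $\xi$ to $D$ by declaring it constant along orbits, i.e. $\xi(r,\phi)$ equals the $r$-coordinate of the intersection of the orbit through $(r,\phi)$ with $\Sigma$. Analyticity of this extension follows from the analytic dependence of solutions of an analytic ODE on initial conditions. By construction, $\dot{\xi} = 0$.

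Next, for each orbit labelled by $\xi$, let $T(\xi)$ denote its period and let $t(r,\phi;\xi)$ be the time the flow takes from $\Sigma$ to the point $(r,\phi)$. The function $T(\xi) = \int_0^{1} d\phi'/\omega_{00}(r(\phi';\xi),\phi')$ is analytic in $\xi$. Define $\zeta := t(r,\phi;\xi)/T(\xi) \pmod 1$; this is analytic in $(r,\phi)$ and satisfies $\dot{\zeta} = 1/T(\xi)$, a function of $\xi$ alone. To recover the specific normalisation $\omega(\xi) = \int_0^{1}\omega_{00}(\xi,\phi')\,d\phi'$ stated in the lemma, I would compose with an intermediate analytic reparametrisation in $\phi$ of the form $\zeta \mapsto \int_0^{\phi} \omega(\xi)/\omega_{00}(\xi,\phi')\,d\phi'$ (so that the factor $1/T(\xi)$ is replaced by the required mean), together with a reparametrisation of the action $\xi$ matched to this normalisation; periodicity of the resulting angle is automatic from the definition.

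The main obstacle is the analyticity and convergence of the change of coordinates on all of the closed disc $D$, including the central fixed point $r=0$ where polar coordinates degenerate. This is exactly where the real-analyticity of $F$ enters: by the classical theorem on analytic centres (Poincar\'e/Lyapunov), an analytic planar vector field with a centre whose nearby orbits are all closed admits a convergent analytic first integral and analytic action--angle coordinates in a neighbourhood of the centre. Gluing this local normal form with the globally defined $(\xi,\zeta)$ coordinates constructed above via the cross-section $\Sigma$, and using compactness of $D$ together with uniqueness of the action--angle form up to reparametrisation, yields a single globally analytic and convergent diffeomorphism realising the claim.
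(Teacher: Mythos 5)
Your plan is a genuinely different construction from the paper's: you build $(\xi,\zeta)$ via a transverse section $\Sigma=\{\phi=0\}$ and normalised first-return time, whereas the paper posits the ansatz $\xi=r\Psi$, $\zeta=r\epsilon$ and solves explicit linear ODEs for $\Psi$ and $\epsilon$ along orbits. Your route is the classical flow-box/action-angle argument and is cleaner conceptually; the paper's route is more computational and (as written) both sketchier and somewhat ambiguous, so the comparison is fair to raise.

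There is, however, a concrete gap in your reparametrisation step, and it matters because it is exactly where the lemma's specific normalisation enters. Your normalised-time angle $\zeta=t/T(\xi)$ gives $\dot\zeta=1/T(\xi)$, and the lemma insists on $\dot\zeta=\omega(\xi)$ with $\omega(\xi)=\int_0^1\omega_{00}(\xi,\phi')\,d\phi'$. The map you propose, $\zeta\mapsto\int_0^\phi\omega(\xi)/\omega_{00}(\xi,\phi')\,d\phi'$, does \emph{not} give $\dot\zeta=\omega(\xi)$: differentiating along the flow you get $\dot\zeta=\omega(\xi)\,\omega_{00}(r,\phi)/\omega_{00}(\xi,\phi)$, and since $r$ oscillates along the orbit ($r_{00}\not\equiv0$) while $\xi$ is fixed, the ratio $\omega_{00}(r,\phi)/\omega_{00}(\xi,\phi)$ is not identically $1$. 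If instead you substitute the orbit's true $r$-profile $R(\phi';\xi)$ into the denominator (which is the only way to get $\dot\zeta=\omega(\xi)$), then periodicity of the new angle forces $\omega(\xi)\,T(\xi)=1$, i.e. you would have to \emph{prove} that $\int_0^1\omega_{00}(\xi,\phi')\,d\phi'=\bigl(\int_0^1 d\phi'/\omega_{00}(R(\phi';\xi),\phi')\bigr)^{-1}$. That identity is not automatic and neither you nor the paper establishes it; reparametrising $\xi$ cannot rescue it either, because once periodicity of $\zeta$ is imposed the angle is unique up to an additive function of $\xi$ and the frequency is pinned at $1/T$. So either the lemma's formula for $\omega$ is meant loosely (the content being merely that some analytic $\omega(\xi)$ exists, namely $1/T$), or an extra identity is needed; your proof should say which.

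A smaller mismatch: you invoke Poincar\'e--Lyapunov analytic-centre theory to get analyticity at the fixed point, but that theorem produces analytic first integrals in the original (Cartesian) chart, where the natural ``radius of the orbit'' is of the type $\sqrt{x^2+y^2}$ and is not analytic at the origin. The lemma, however, is formulated in the already-introduced polar coordinates $(r,\phi)$ (which are themselves singular at $r=0$), and what is actually needed is analyticity of $\xi(r,\phi)$ and $\zeta(r,\phi)$ in the polar variables uniformly down to $r=0$. Your argument needs a direct estimate showing, e.g., that $\xi(r,\phi)/r$ extends analytically across $r=0$ (which is also what the paper's formula $\xi=r\Psi$ with bounded $\Psi$ is implicitly trying to arrange), rather than an appeal to the Cartesian centre theorem.
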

\begin{proof}
Let us write
\newline
$$\xi(t):=r(t) \cdot \Psi(t), \ \ \ \  \zeta(t):= r(t) \cdot \epsilon(t)$$
Then
$$0= \dot{\xi}(t) = \Psi(\phi) \cdot \dot{r} + \frac{d \Psi}{d t} \cdot r = \Psi(\phi) \cdot r_{00}(r,\phi)  + \frac{d \Psi}{d t} \cdot r $$
 and 
$$\omega(\xi)= \dot{\zeta}(t) = \epsilon(\phi) \cdot \dot{r} + \frac{d \epsilon}{d t} \cdot r = \epsilon(\phi) \cdot r_{00}(r,\phi) + \frac{d \epsilon}{d t} \cdot r $$
Hence 
$$\Psi(t) = exp \Bigg(- \int_{0}^{t}{\frac{r_{00}(r(s),\phi(s))}{r(s)} \ d s} \Bigg)$$ 
under assumption $\Psi(0)=1$. In the same way we find that
$$\epsilon(t) =  exp \Bigg(- \int_{0}^{t}{\frac{r_{00}(r(s),\phi(s))}{r(s)} \ d s} \Bigg) \cdot \int_{0}^{t} \omega(r(t')) \cdot exp \Bigg( \int_{0}^{t'}{\frac{r_{00}(r(s),\phi(s))}{r(s)} \ d s} \Bigg) \ d t' $$
\end{proof}

Due to the above lemma (\ref{coordDisc}), we can assume that the system (\ref{3rown}) is governed by the equations:
\begin{equation}\label{4rown}
\begin{cases}
\dot{v_1}= p(r,\phi) \cdot v_1  + p_{00}(v_1,v_2,r,\phi) \cdot v_1
\\
\dot{v_2}= \varsigma(r,\phi) \cdot v_2 +  \varsigma_{00}(v_1,v_2,r,\phi) \cdot v_2
\\
\dot{r} = v_1 v_2 \cdot r_{0}(v_1,v_2,r,\phi) 
\\
\dot{\phi} = \omega(r) + v_1 v_2 \cdot \omega_{0}(v_1,v_2,r,\phi) 
\end{cases}
\end{equation}
In order to reduce the above system \ref{4rown} by coordinates transformation into \ref{normFormODE}, we follow the steps of reduction as in \cite{Shil1}. Let us introduce:
\newline
\begin{equation}
\begin{cases}
p_1(r,\phi,v_1) :=p_{00}(v_1,0,r,\phi)
\\
p_2(r,\phi,v_2) :=p_{00}(0,v_2,r,\phi)
\\
p_3(r,\phi,v_1,v_2) :=p_{00}(v_1,v_2,r,\phi) - p_1(r,\phi,v_1) - p_2(r,\phi,v_2)
\\
\varsigma_1(r,\phi,v_1) :=\varsigma_{00}(v_1,0,r,\phi)
\\ 
\varsigma_2(r,\phi,v_2) :=\varsigma_{00}(0,v_2,r,\phi)
\\
\varsigma_3(r,\phi,v_1,v_2) :=\varsigma_{00}(v_1,v_2,r,\phi) - \varsigma_1(r,\phi,v_1) - \varsigma_2(r,\phi,v_2)
\end{cases}
\end{equation}

In the system (\ref{4rown}) we want to get rid of the terms $p_1, \ p_2, \ \varsigma_1, \ \varsigma_2$, by subsequent analytic and convergent changes of coordinates. Next four propositions will consequently deal with the terms $p_2$, then $\varsigma_1$, $p_1$, $\varsigma_2$ (see \cite{Shil1}, \cite{Shil2}).

\begin{prop}
Killing the term $p_2$ with substitution $\xi_1 := v_1 + h_1(r,\phi, v_2) \cdot v_1$
\end{prop}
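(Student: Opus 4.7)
The plan is to find $h_1(r,\phi,v_2)$ analytic, $1$-periodic in $\phi$, and vanishing at $v_2=0$, such that the change of coordinate $\xi_1:=v_1(1+h_1(r,\phi,v_2))$ transforms (\ref{4rown}) into a system of the same shape whose new unstable coefficient $\tilde p_{00}$ satisfies $\tilde p_{00}(0,v_2,r,\phi)\equiv 0$; i.e., we force the $v_1$-independent piece of the new $p_{00}$ (the new $p_2$) to vanish. The equations for $\dot v_2$, $\dot r$, $\dot\phi$ will automatically keep the form of (\ref{4rown}) because $v_1=\xi_1/(1+h_1)$ is an analytic invertible substitution in a neighbourhood of $v_2=0$.

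To set up the cohomological equation, I would compute $\dot\xi_1/\xi_1$ by logarithmic differentiation along (\ref{4rown}), obtaining
\[
\frac{\dot\xi_1}{\xi_1}=p(r,\phi)+p_{00}(v_1,v_2,r,\phi)+\frac{v_1 v_2 r_0\,\partial_r h_1+(\omega(r)+v_1 v_2\omega_0)\,\partial_\phi h_1+v_2(\varsigma(r,\phi)+\varsigma_{00})\,\partial_{v_2}h_1}{1+h_1}.
\]
Setting $v_1=0$ (equivalently $\xi_1=0$) and using the decomposition $p_{00}=p_1+p_2+p_3$ with $p_1(r,\phi,0)=0$ and $p_3(0,v_2,r,\phi)\equiv 0$, together with $\varsigma_{00}(0,v_2,r,\phi)=\varsigma_2(r,\phi,v_2)$, the kill-condition on $p_2$ becomes the linear first-order PDE
\[
\omega(r)\,\partial_\phi h_1+v_2\bigl(\varsigma(r,\phi)+\varsigma_2(r,\phi,v_2)\bigr)\partial_{v_2}h_1+p_2(r,\phi,v_2)(1+h_1)=0,\qquad h_1(r,\phi+1,v_2)=h_1(r,\phi,v_2).
\]
Since $p_2(r,\phi,0)=p_{00}(0,0,r,\phi)=0$, any power-series solution automatically satisfies $h_1(r,\phi,0)=0$, so the coordinate change is invertible near $v_2=0$.

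I would then solve this PDE by an ansatz in powers of $v_2$: writing $h_1=\sum_{k\ge 1}h_1^{(k)}(r,\phi)\,v_2^k$, $p_2=\sum_{k\ge 1}p_2^{(k)}\,v_2^k$, $\varsigma_2=\sum_{k\ge 1}\varsigma_2^{(k)}\,v_2^k$, the coefficient of $v_2^k$ yields, for each $k\ge 1$, a linear first-order ODE in $\phi$ with $r$ as parameter,
\[
\omega(r)\,\partial_\phi h_1^{(k)}+k\,\varsigma(r,\phi)\,h_1^{(k)}=-p_2^{(k)}(r,\phi)+R_k(r,\phi),
\]
where $R_k$ is a polynomial in the already-determined data $h_1^{(j)}$, $p_2^{(j)}$, $\varsigma_2^{(j)}$ with $j<k$. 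Using the integrating factor $\exp\bigl(\int_0^\phi k\varsigma(r,\psi)/\omega(r)\,d\psi\bigr)$, a unique $1$-periodic solution exists because the Floquet multiplier $\exp\bigl(k\int_0^1\varsigma(r,\psi)/\omega(r)\,d\psi\bigr)$ is different from $1$; indeed, since $\varsigma<0$ and $\omega$ has constant sign, this multiplier is actually bounded away from $1$ uniformly in $k\ge 1$, so no small divisors appear and each $h_1^{(k)}$ is analytic in $(r,\phi)$.

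The main technical hurdle I anticipate is the convergence of this formal series. The uniform separation of the Floquet multipliers from $1$ yields a uniform operator estimate $\|h_1^{(k)}\|_\infty\le M\,\|-p_2^{(k)}+R_k\|_\infty$ with $M$ independent of $k$, so a Cauchy-majorant bookkeeping in the spirit of \cite{Shil1}, \cite{Shil2} produces geometric bounds $\|h_1^{(k)}\|_\infty\le C\rho^{-k}$ for some $\rho>0$. Consequently $h_1$ is analytic on $\{|v_2|<\rho\}$; since $1+h_1\neq 0$ there, $\xi_1=v_1(1+h_1)$ is an analytic diffeomorphism onto its image, and inserting $v_1=\xi_1/(1+h_1)$ into the remaining equations of (\ref{4rown}) preserves their structural form. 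The new unstable equation then has $\tilde p_2\equiv 0$, which is the desired conclusion.
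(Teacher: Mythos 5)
Your proof is correct but takes a genuinely different route from the paper's. The paper derives the same cohomological equation by the same logarithmic differentiation of $\xi_1=v_1(1+h_1)$, but then seeks $h_1$ with $\partial_\phi h_1=0$; with that ansatz the equation becomes the invariance condition for a graph $X=h_1(v_2)$ in the auxiliary two-dimensional flow $\dot X=-(1+X)\,p_2(r,\phi,v_2)$, $\dot v_2=v_2\big(\varsigma(r,\phi)+\varsigma_2(r,\phi,v_2)\big)$, and $h_1$ (and its analyticity) is read off from the strong stable manifold theorem at the fixed point $(X,v_2)=(0,0)$, whose only nonzero eigenvalue is $\varsigma<0$. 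You instead keep the transport term $\omega(r)\,\partial_\phi h_1$, expand everything in powers of $v_2$, solve the resulting hierarchy of linear first-order ODEs in $\phi$ by quadrature, and note that the Floquet multipliers $\exp\big(k\int_0^1\varsigma/\omega\big)$ are uniformly separated from $1$, so there are no small divisors and a majorant argument gives analyticity. The paper's route is shorter and more geometric; yours is more computational but handles the $\phi$-dependence of $p_2$ and $\varsigma$ directly rather than suppressing it through the ansatz $\partial_\phi h_1=0$, which is worth something here since those coefficients do depend on $\phi$ at this stage (the $\phi$-averaging of $p$ and $\varsigma$ happens only in the subsequent lemma). Two points you should make explicit: the Floquet argument requires $\omega(r)\neq 0$ throughout the annulus, which is implicit in the standing assumption that $D$ is foliated by nontrivial periodic orbits; and the uniform estimate $\|h_1^{(k)}\|_\infty\le M\,\|{-p_2^{(k)}+R_k}\|_\infty$ can in fact be sharpened to $M/k$, since the damping coefficient in the $k$-th cohomological ODE scales linearly in $k$, which is what makes the majorant bookkeeping painless.
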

\begin{proof}
We have
$$\dot{\xi}_1 = \dot{v}_1 \cdot (1 + h) + v_1 \cdot \Big(\frac{\partial h_1}{\partial r} \dot{r} + \frac{\partial h_1}{\partial \phi} \dot{\phi} + \frac{\partial h_1}{\partial v_2} \dot{v}_2 \Big) = $$
$$= v_1 \cdot (1 + h) \cdot \Big( p(r,\phi) + p_1(r,\phi,v_1) +p_2(r,\phi,v_2) + p_3(r,\phi,v_1,v_2) \Big) + v_1 \cdot \Big(\frac{\partial h_1}{\partial r} v_1 v_2 \cdot r_{0}(v_1,v_2,r,\phi)$$
$$ + \frac{\partial h_1}{\partial \phi} (\omega(r) + v_1 v_2 \cdot \omega_{0}(v_1,v_2,r,\phi))
 + \frac{\partial h_1}{\partial v_2} \cdot v_2 \cdot ( \varsigma(r,\phi) + \varsigma_1(r,\phi,v_1) + \varsigma_2(r,\phi,v_2) + \varsigma_3(r,\phi,v_1,v_2) ) \Big)$$
$$= \xi_1 \cdot \Big( p(r,\phi) + p_1(r,\phi,v_1) +p_2(r,\phi,v_2) + p_3(r,\phi,v_1,v_2) \Big) $$
$$+ v_1 \cdot \Big( \frac{\partial h_1}{\partial \phi} \omega(r) + \frac{\partial h_1}{\partial v_2} \cdot v_2 \cdot ( \varsigma(r,\phi) + \varsigma_2(r,\phi,v_2) + \varsigma_3(r,\phi,v_1,v_2) ) \Big) + h.o.t. $$

So in order to kill the term $p_2$, it suffices to find the function $h_1$ such that $h_1(r,\phi,0)=0$ and:
$$- (1+h_1) \cdot p_2 = \frac{\partial h_1}{\partial \phi} \omega(r) + \frac{\partial h_1}{\partial v_2} \cdot v_2 \cdot \Big( \varsigma(r,\phi) + \varsigma_2(r,\phi,v_2) \Big)$$
Let us take $h_1(r,\phi, v_2) = h_1(r,v_2)$, i.e. $\frac{\partial h_1}{\partial \phi} = 0$.
Consider the system:
\begin{equation}\label{}
\begin{cases}
\dot{X} = -(1+X) \cdot p_2(r,\phi,v_2) 
\\
\dot{v}_2 = v_2 \cdot (\varsigma(r,\phi) + \varsigma_2(r,\phi,v_2)) 
\end{cases}
\end{equation}
Due to $p_2(r,\phi,0) = 0$, the 
point $(X,v_2)=(0,0)$ is a fixed point.
The linearized system at $(0,0)$ is:
\begin{equation}\label{}
\begin{aligned}
\left( \begin{array}{ccc}
0 & - \frac{\partial p_2}{\partial v_2} (r,\phi,0) \\
0 & \varsigma(r,\phi) \end{array} \right) \circ \left( \begin{array}{c}
X \\
v_2  \end{array} \right) 
=
\left( \begin{array}{c}
- \frac{\partial p_2}{\partial v_2} (r,\phi,0) v_2 \\
\varsigma(r,\phi) v_2  \end{array} \right) 
\end{aligned}
\end{equation}
Note that we assume $\varsigma(r,\phi) < 0$ for all $r$ and $\phi$. As this is the only non-zero eigenvalue of the matrix governing linearized system above, then
from the strong stable manifold theorem for fixed points, follows the existence of $h_1$ which is its parameterization. 
\end{proof}

\begin{prop}
Killing the term $\varsigma_1$ with substitution $\xi_2 := v_2 + h_2(r,\phi, v_1) \cdot v_2$
\end{prop}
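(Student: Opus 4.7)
The plan is to mirror exactly the structure of the proof of the previous proposition, but with the roles of the stable and unstable directions swapped. First I compute $\dot{\xi}_2$ using the chain rule applied to $\xi_2 = v_2(1+h_2(r,\phi,v_1))$, together with the equations of motion for $v_2$, $r$, $\phi$, and $v_1$. After grouping the $(1+h_2)$ factor into $\xi_2$ and discarding the higher-order $h.o.t.$ terms (those involving $v_1 v_2 \cdot r_0$, $v_1 v_2 \cdot \omega_0$, $\varsigma_3$, and $p_3$), I expect to obtain the identity
\begin{equation*}
\dot{\xi}_2 = \xi_2 \cdot \bigl(\varsigma(r,\phi) + \varsigma_1(r,\phi,v_1) + \varsigma_2(r,\phi,v_2) + \varsigma_3(r,\phi,v_1,v_2)\bigr)
 + v_2 \cdot \Bigl(\tfrac{\partial h_2}{\partial \phi}\,\omega(r) + \tfrac{\partial h_2}{\partial v_1}\, v_1 \bigl(p(r,\phi)+p_1(r,\phi,v_1)+p_2(r,\phi,v_2)\bigr) \Bigr) + h.o.t.
\end{equation*}
so that killing $\varsigma_1$ amounts to finding $h_2$ with $h_2(r,\phi,0)=0$ solving
\begin{equation*}
-(1+h_2)\,\varsigma_1(r,\phi,v_1) = \tfrac{\partial h_2}{\partial \phi}\,\omega(r) + \tfrac{\partial h_2}{\partial v_1}\, v_1 \bigl(p(r,\phi)+p_1(r,\phi,v_1)\bigr).
\end{equation*}

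Next, as in the previous proposition, I would look for a solution independent of $\phi$, i.e.\ $h_2(r,\phi,v_1)=h_2(r,v_1)$, treating $(r,\phi)$ as parameters. The equation above then becomes the invariance equation for the graph $X = h_2(v_1)$ under the planar auxiliary system
\begin{equation*}
\dot{X} = -(1+X)\,\varsigma_1(r,\phi,v_1), \qquad \dot{v}_1 = v_1 \bigl(p(r,\phi)+p_1(r,\phi,v_1)\bigr),
\end{equation*}
which has $(X,v_1)=(0,0)$ as a fixed point (using $\varsigma_1(r,\phi,0)=0$ and $p_1(r,\phi,0)=0$).

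The linearization at the origin is upper triangular with eigenvalues $0$ and $p(r,\phi)>0$. Since $p(r,\phi)>0$ is the only non-zero eigenvalue and is separated from $0$ by a uniform spectral gap (over the compact parameter range), the strong unstable manifold theorem (equivalently, the strong stable manifold theorem applied to the time-reversed system) produces an analytic one-dimensional invariant manifold through $(0,0)$ tangent to the eigenvector corresponding to $p(r,\phi)$. This manifold can be written as the graph $X=h_2(r,\phi,v_1)$ with $h_2(r,\phi,0)=0$, depending analytically on the parameters $(r,\phi)$ and hence $1$-periodic in $\phi$.

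The step I anticipate being most delicate is justifying that $h_2$ inherits the analyticity and the $1$-periodicity in $\phi$ uniformly in $(r,\phi)$. The periodicity follows from the periodicity of the coefficients $\varsigma_1$, $p$, $p_1$ in $\phi$ together with the uniqueness of the invariant manifold, and the analyticity (and convergence of the defining series) from the analytic strong unstable manifold theorem applied parametrically. Once $h_2$ is in hand, substituting back shows that in the $\xi_2$ variable the drift no longer carries a pure $v_1$-only correction $\varsigma_1 \cdot v_2$, reducing the stable equation to $\dot{\xi}_2 = \varsigma(r,\phi)\,\xi_2 + (\varsigma_2+\varsigma_3')\,\xi_2$ up to genuinely mixed $v_1 v_2$ terms, which is the desired cancellation.
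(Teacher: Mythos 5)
Your proposal mirrors the paper's proof: write $\xi_2 = v_2(1+h_2)$, derive the first-order PDE for $h_2$, set $\partial h_2/\partial \phi = 0$, reinterpret the resulting ODE as the invariance equation for a graph $X = h_2(v_1)$ over the auxiliary planar system, and invoke the strong unstable manifold theorem at the fixed point $(0,0)$ with eigenvalues $0$ and $p(r,\phi)>0$. The only deviation is cosmetic: you retain the $p_1(r,\phi,v_1)$ term in the auxiliary $\dot{v}_1$ equation, whereas the paper drops it to $\dot{v}_1 = v_1\, p(r,\phi)$; since $p_1(r,\phi,0)=0$, the linearization and the conclusion of the strong unstable manifold theorem are unaffected either way.
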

\begin{proof}
The only difference with respect to the previous proposition is that we need to solve the following equation for $h_2$ in order to kill the term $\varsigma_1$:
$$h_2(r,\phi,0)=0 \ \ \  \text{and} \ \ \ \ - (1+h_2) \cdot \varsigma_1 = \frac{\partial h_2}{\partial \phi} \omega(r) + \frac{\partial h_2}{\partial v_1} \cdot v_1 \cdot p(r,\phi) $$
Let $h_2(r,\phi, v_1) = h_2(r,v_1)$, i.e. $\frac{\partial h_2}{\partial \phi} = 0$.

Consider the system:
\begin{equation}\label{}
\begin{cases}
\dot{X} = -(1+X) \cdot \varsigma_1(r,\phi,v_1) 
\\
\dot{v}_1 = v_1 \cdot p(r,\phi) 
\end{cases}
\end{equation}
Due to $\varsigma_1(r,\phi,0) = 0$, the 
point $(X,v_1)=(0,0)$ is a fixed point.
The linearized system at $(0,0)$ is:

\begin{equation}\label{}
\begin{aligned}
\left( \begin{array}{ccc}
0  & -\frac{\partial \varsigma_1}{\partial v_1} (r,\phi,0) \\
0 & p(r,\phi) \end{array} \right) \circ \left( \begin{array}{c}
X \\
v_1  \end{array} \right) 
=
\left( \begin{array}{c}
- \frac{\partial \varsigma_1}{\partial v_1} (r,\phi,0) v_1 \\
p(r,\phi) v_1  \end{array} \right) 
\end{aligned}
\end{equation}
As we assume $p(r,\phi) > 0$ for all $r$ and $\phi$, 
from the strong unstable manifold theorem for fixed points, follows the existence of required $h_2$ which is its the parameterization. 
\end{proof}

\begin{prop}
Killing the term $p_1$ with substitution $\xi_1 := v_1 + h_3(r,\phi, v_1) \cdot v_1$
\end{prop}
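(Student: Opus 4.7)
The approach mirrors the two preceding propositions: differentiate the ansatz, extract a homological equation on $h_3$, and then solve that equation by interpreting it as the invariance equation for a one-dimensional strong unstable manifold of an auxiliary planar system.

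First I would differentiate $\xi_1 = v_1(1+h_3(r,\phi,v_1))$ along the flow,
\[
\dot\xi_1 = (1+h_3)\,\dot v_1 + v_1\Bigl(\tfrac{\partial h_3}{\partial r}\dot r + \tfrac{\partial h_3}{\partial \phi}\dot\phi + \tfrac{\partial h_3}{\partial v_1}\dot v_1\Bigr),
\]
and insert the equations in the post-$p_2$, post-$\varsigma_1$ form. Collecting the contributions that carry no explicit factor of $v_2$ (everything else is to be absorbed into a new, still admissible, remainder of type $p_3$) and requiring that the transformed equation read $\dot\xi_1 = p(r,\phi)\,\xi_1 + (\text{terms with a } v_2 \text{ factor})$ produces the homological equation
\[
-(1+h_3)\,p_1(r,\phi,v_1) \;=\; \tfrac{\partial h_3}{\partial \phi}\,\omega(r) \;+\; \tfrac{\partial h_3}{\partial v_1}\,v_1\bigl(p(r,\phi)+p_1(r,\phi,v_1)\bigr),
\]
together with the compatibility condition $h_3(r,\phi,0)=0$. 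As in the previous proposition I would seek $h_3$ independent of $\phi$, which reduces the PDE to the planar parameter-dependent system
\[
\dot X = -(1+X)\,p_1(r,\phi,v_1), \qquad \dot v_1 = v_1\bigl(p(r,\phi)+p_1(r,\phi,v_1)\bigr),
\]
with $(r,\phi)$ acting merely as parameters. Since the splitting $p_{00}=p_1+p_2+p_3$ is arranged so that $p_1(r,\phi,0)=0$, the origin is a fixed point and its linearization has eigenvalues $0$ (along $X$) and $p(r,\phi)>0$ (along $v_1$). The Strong Unstable Manifold Theorem in its real-analytic, parameter-smooth form then produces the one-dimensional real-analytic invariant graph $X = h_3(r,\phi,v_1)$ tangent to the $v_1$-axis, which is exactly the required solution.

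The substantive obstacle is not the existence statement, which is a direct application of the invariant manifold theorem exactly as in the $\varsigma_1$ step (with the roles of the eigenvalues matched: here we again exploit $p(r,\phi)>0$ to get an unstable rather than a stable graph). What genuinely needs care is the bookkeeping around the substitution: one must check that after replacing $v_1$ by $\xi_1$ the three other equations retain the structural form prescribed in \eqref{normFormODE}, in particular that the new remainders in $\dot v_2$, $\dot r$, $\dot\phi$ still carry the $v_1 v_2$ factor needed for the last normalization step (killing $\varsigma_2$) to fit into the same template. This is the same kind of tracking used in the reductions of \cite{Shil1,Shil2} and introduces no new analytic difficulty beyond the strong unstable manifold argument above.
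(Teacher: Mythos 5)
Your proof is essentially identical to the paper's: the same homological equation $-(1+h_3)p_1 = \tfrac{\partial h_3}{\partial \phi}\omega(r) + \tfrac{\partial h_3}{\partial v_1}v_1(p+p_1)$, the same reduction to the planar auxiliary system $\dot X = -(1+X)p_1$, $\dot v_1 = v_1(p+p_1)$ by taking $h_3$ independent of $\phi$, and the same appeal to the strong unstable manifold theorem using $p(r,\phi)>0$. Your closing remark about verifying that the other three equations keep the $v_1 v_2$-factored structure after the substitution is a worthwhile piece of bookkeeping that the paper leaves implicit, but it introduces no new argument.
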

\begin{proof}
Here in order to kill the term $p_1$, we have to look for $h_3$ such that:
$$h_3(r,\phi,0)=0 \ \ \ \text{and} \ \ \ - (1+h_3) \cdot p_1 = \frac{\partial h_3}{\partial \phi} \omega(r) + \frac{\partial h_3}{\partial v_1} \cdot v_1 \cdot \Big(p(r,\phi) + p_1(r,\phi,v_1) \Big) $$
Let $h_3(r,\phi, v_1) = h_3(r,v_1)$, i.e. $\frac{\partial h_3}{\partial \phi} = 0$.
Consider the system:
\begin{equation}\label{}
\begin{cases}
\dot{X} = -(1+X) \cdot p_1(r,\phi,v_1) 
\\
\dot{v}_1 = v_1 \cdot \Big( p(r,\phi) + p_1(r,\phi,v_1) \Big) 
\end{cases}
\end{equation}
Due to $p_1(r,\phi,0) = 0$, the 
point $(X,v_1)=(0,0)$ 
The linearized system at $(0,0)$ is:
\begin{equation}\label{}
\begin{aligned}
\left( \begin{array}{ccc}
0 &  -\frac{\partial p_1}{\partial v_1} (r,\phi,0) \\
0 & p(r,\phi) + p_1(r,\phi,0) \end{array} \right) \circ \left( \begin{array}{c}
X \\
v_1  \end{array} \right) 
=
\left( \begin{array}{c}
- \frac{\partial p_1}{\partial v_1} (r,\phi,0) v_1 \\
p(r,\phi) v_1  \end{array} \right) 
\end{aligned}
\end{equation}
As in the previous propositions, the strong unstable manifold theorem for fixed points, guarantees the existence of $h_3$. 
\end{proof}

\begin{prop}
Killing the term $\varsigma_2$ with substitution $\xi_2 := v_2 + h_4(r,\phi, v_2) \cdot v_2$
\end{prop}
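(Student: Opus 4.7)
The plan is to follow exactly the template of the three preceding propositions, adapted to the sign configuration relevant to the $\varsigma_2$ term. First I would compute
\[
\dot{\xi}_2 = \dot{v}_2(1+h_4) + v_2\!\left(\tfrac{\partial h_4}{\partial r}\dot r + \tfrac{\partial h_4}{\partial \phi}\dot\phi + \tfrac{\partial h_4}{\partial v_2}\dot v_2\right),
\]
substitute the current form of the system (in which $p_1$, $p_2$, $\varsigma_1$ have already been eliminated), and group the terms proportional to $\xi_2$. After this identification, killing the $\varsigma_2$ term reduces to the requirement that $h_4$ satisfy
\[
-(1+h_4)\,\varsigma_2 = \frac{\partial h_4}{\partial \phi}\,\omega(r) + \frac{\partial h_4}{\partial v_2}\,v_2\,\bigl(\varsigma(r,\phi)+\varsigma_2(r,\phi,v_2)\bigr),\qquad h_4(r,\phi,0)=0.
\]

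Next, to reduce the problem to a fixed-point/invariant-manifold statement I would restrict to the ansatz $h_4(r,\phi,v_2)=h_4(r,v_2)$, so that $\partial_\phi h_4=0$, and interpret $h_4$ as the graph of an invariant manifold for the auxiliary planar system
\[
\begin{cases}
\dot X = -(1+X)\,\varsigma_2(r,\phi,v_2),\\
\dot v_2 = v_2\,\bigl(\varsigma(r,\phi)+\varsigma_2(r,\phi,v_2)\bigr),
\end{cases}
\]
where $(r,\phi)$ are treated as parameters. Because $\varsigma_2(r,\phi,0)=0$, the origin $(X,v_2)=(0,0)$ is a fixed point; its linearization is upper-triangular with eigenvalues $0$ and $\varsigma(r,\phi)$. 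By the standing assumption $\varsigma(r,\phi)<0$, the single nonzero eigenvalue is stable, so the \emph{strong stable manifold theorem} applied to the fixed point produces a unique analytic one-dimensional invariant manifold tangent to the $v_2$-axis; its parameterization over $v_2$ yields the desired $h_4(r,v_2)$ with $h_4(r,0)=0$. This is the analogue, for $\varsigma_2$, of the strong stable argument used in the first proposition and the strong unstable arguments used in the second and third.

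Finally, I would check that the substitution $\xi_2 = v_2 + h_4(r,v_2)v_2$ is a genuine analytic change of coordinates near $v_2=0$ (immediate from $h_4(r,0)=0$ and the implicit function theorem), that it preserves the form of the other three equations up to the admitted higher-order shape $v_1v_2\cdot(\cdot)$, and that $1$-periodicity in $\phi$ is retained (trivial, since $h_4$ is $\phi$-independent). Combining this proposition with the preceding three yields the advertised normal form \eqref{normFormODE}. I do not expect genuine obstacles here: the only point requiring care is that after each of the four successive substitutions the ``residual'' nonlinear parts get renamed, so one must verify that the assumption $\varsigma_2(r,\phi,0)=0$ (and the analogous ones at earlier stages) persists through the transformations, which is automatic because each substitution is the identity on $\{v_2=0\}$ or $\{v_1=0\}$ respectively, and because the strong (un)stable manifold from the previous step depends analytically on the parameters $(r,\phi)$, keeping convergence of the composition.
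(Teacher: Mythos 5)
Your proof follows essentially the same route as the paper: you derive the same PDE for $h_4$ with boundary condition $h_4(r,\phi,0)=0$, impose $\phi$-independence, recast the equation as invariance of the graph $X=h_4(r,v_2)$ for the auxiliary planar system $\dot X=-(1+X)\varsigma_2$, $\dot v_2=v_2(\varsigma+\varsigma_2)$, and invoke the strong stable manifold theorem at the fixed point $(0,0)$ using $\varsigma(r,\phi)<0$. One small imprecision: the strong stable manifold is tangent to the eigenvector $\bigl(-\tfrac{\partial \varsigma_2}{\partial v_2}(r,\phi,0)/\varsigma(r,\phi),\,1\bigr)$ rather than to the $v_2$-axis, but this does not affect the argument since the manifold is still a graph over $v_2$ through the origin, which is all that is needed for $h_4(r,0)=0$.
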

\begin{proof}
In order to kill the term $\varsigma_2$, we look for $h_4$ satisfying the following conditions:
$$h_4(r,\phi,0)=0 \ \ \ \ \text{and} \ \ \ - (1+h_4) \cdot \varsigma_2 = \frac{\partial h_4}{\partial \phi} \omega(r) + \frac{\partial h_4}{\partial v_2} \cdot v_2 \cdot \Big(\varsigma(r,\phi) + \varsigma_2(r,\phi,v_2) \Big)$$
\newline
Let $h_4(r,\phi, v_2) = h_4(r,v_2)$, i.e. $\frac{\partial h_4}{\partial \phi} = 0$.
Consider the system:
\begin{equation}\label{}
\begin{cases}
\dot{X} = -(1+X) \cdot \varsigma_2(r,\phi,v_2) 
\\
\dot{v}_2 = v_2 \cdot \Big( \varsigma(r,\phi) + \varsigma_2(r,\phi,v_2) \Big) 
\end{cases}
\end{equation}
Due to $\varsigma_2(r,\phi,0) = 0$, the
point $(X,v_2)=(0,0)$ is a fixed point.
The linearized system at $(0, 0)$ is:
\begin{equation}\label{}
\begin{aligned}
\left( \begin{array}{ccc}
0 & -\frac{\partial \varsigma_2}{\partial v_2} (r,\phi,0) \\
 0 & \varsigma(r,\phi) + \varsigma_2(r,\phi,0) \end{array} \right) \circ \left( \begin{array}{c}
X \\
v_2  \end{array} \right) 
=
\left( \begin{array}{c}
- \frac{\partial \varsigma_2}{\partial v_2} (r,\phi,0) v_2 \\
\varsigma(r,\phi) v_2  \end{array} \right) 
\end{aligned}
\end{equation}
Hence from the strong stable manifold theorem for fixed point, follows the existence of $h_4$ which is its the parameterization. 
\end{proof}

In order to obtain the normal form (\ref{normFormODE}) of the ODE near the center manifold $D$, we need to utilize the lemmas above along with the following:
\begin{lemma}
There exists an analytic and convergent change of the coordinates that transforms system
\begin{equation}\label{5rown}
\begin{cases}
\dot{v_1}= p(r,\phi) \cdot v_1  + \Theta(v_1,v_2,r,\phi) \cdot v_1
\\
\dot{v_2}= \varsigma(r,\phi) \cdot v_2 +  \Lambda(v_1,v_2,r,\phi) \cdot v_2
\\
\dot{r} = v_1 v_2 \cdot r_{0v}(v_1,v_2,r,\phi) 
\\
\dot{\phi} = \omega(r) + v_1 v_2 \cdot \omega_{0v}(v_1,v_2,r,\phi) 
\end{cases}
\end{equation}
into
\begin{equation}\label{6rown}
\begin{cases}
\dot{z_1}= p_0(r) \cdot z_1  + \theta(z_1,z_2,r,\phi) \cdot z_1
\\
\dot{z_2}= \varsigma_0(r) \cdot z_2 +  \lambda(z_1,z_2,r,\phi) \cdot z_2
\\
\dot{r} = z_1 z_2 \cdot r_{0z}(z_1,z_2,r,\phi) 
\\
\dot{\phi} = \omega(r) + z_1 z_2 \cdot \omega_{0z}(z_1,z_2,r,\phi) 
\end{cases}
\end{equation}
with 
$$p_0 (r):= \int_{0}^{1}{p(r, \phi) \ d \phi}, \ \ \ \varsigma_0 (r):=  \int_{0}^{1}{\varsigma(r, \phi) \ d \phi}$$ 
and $\theta$, $\lambda$ being $1$ -periodic in $\phi$
\end{lemma}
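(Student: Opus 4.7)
The strategy is a Floquet--like substitution $z_1 = \alpha(r,\phi)\,v_1$, $z_2 = \beta(r,\phi)\,v_2$, leaving $r$ and $\phi$ unchanged, where $\alpha,\beta$ are positive analytic functions $1$-periodic in $\phi$ to be determined so as to absorb the $\phi$-dependence of the linear coefficients $p(r,\phi)$ and $\varsigma(r,\phi)$ into the change of variables. Since $\alpha,\beta>0$, the map $(v_1,v_2,r,\phi)\mapsto(z_1,z_2,r,\phi)$ is an analytic diffeomorphism with analytic inverse, so ``convergence'' reduces to analyticity of explicit integrals.

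Differentiating and substituting (5rown) gives
$$\dot z_1 \;=\; z_1\Bigl(p(r,\phi) + \omega(r)\tfrac{\alpha_\phi}{\alpha}\Bigr) \;+\; z_1\,\Theta \;+\; v_1^{\,2} v_2\bigl(\alpha_r r_{0v} + \alpha_\phi \omega_{0v}\bigr).$$
It thus suffices to choose $\alpha$ so that the linear bracket equals $p_0(r)$, namely to solve the first-order linear equation $\omega(r)\,\partial_\phi\ln\alpha = p_0(r) - p(r,\phi)$, which has the explicit solution
$$\alpha(r,\phi) \;=\; \exp\!\Bigl(\int_0^\phi \frac{p_0(r) - p(r,\phi')}{\omega(r)}\,d\phi'\Bigr).$$
The crucial point is that $\alpha$ is automatically $1$-periodic in $\phi$, because the integrand has zero mean on $[0,1]$ by the very definition of $p_0$ as the $\phi$-average of $p$. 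It is analytic since $p,\omega$ are analytic and $\omega(r)\neq 0$ on $D$ (at $r=0$, $\omega(0)=|\mathrm{Im}\,\lambda_3|=1$, and by the hypothesis that $D$ is foliated by periodic orbits $\omega$ stays bounded away from $0$ on the compact disc). The analogous $\beta$ that removes the $\phi$-dependence of $\varsigma$ is obtained by replacing $(p,p_0)$ with $(\varsigma,\varsigma_0)$ in the above formula.

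Finally I would verify that the new system retains the structural form of (6rown). Since $v_1 v_2 = z_1 z_2/(\alpha\beta)$, setting $r_{0z}(z_1,z_2,r,\phi) := r_{0v}(z_1/\alpha,z_2/\beta,r,\phi)/(\alpha\beta)$ and similarly $\omega_{0z}$ yields the third and fourth equations of (6rown) at once, with $1$-periodicity inherited from $\alpha,\beta,r_{0v},\omega_{0v}$. The residual in the $\dot z_1$ equation, namely $z_1\Theta + v_1^{\,2}v_2(\alpha_r r_{0v} + \alpha_\phi \omega_{0v})$, factors as $z_1\cdot\theta(z_1,z_2,r,\phi)$ with $\theta$ analytic and $1$-periodic in $\phi$; the corresponding $\lambda$ is constructed in the same way. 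The only step that requires any thought is the periodicity of $\alpha,\beta$, which is forced by — and is the sole reason for — taking $p_0,\varsigma_0$ to be the $\phi$-averages of $p,\varsigma$; everything else (analyticity, preservation of vector-field structure) is automatic from the explicit formulas.
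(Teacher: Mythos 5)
Your proof is correct and follows essentially the same route as the paper: a multiplicative change of the hyperbolic variables with a factor depending on $(r,\phi)$ (your $\alpha,\beta$ are just the reciprocals of the paper's $1+w_1$, $1+w_2$), determined by solving $\omega(r)\,\partial_\phi\ln\alpha = p_0(r)-p(r,\phi)$ with the same exponential-of-integral formula. The extra observations you make (periodicity forced by the zero-mean choice of $p_0,\varsigma_0$, and nonvanishing of $\omega$) are correct and only make explicit what the paper leaves implicit.
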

\begin{proof}
Let us write
$$v_1:=z_1 \cdot (1+w_1(r,\phi)), \ \ \ \  v_2:=z_2 \cdot (1+w_2(r,\phi))$$
with $w_1(r,0) = w_2(r,0)=0$. Then
\begin{equation}\label{}
\begin{aligned}
\dot{v}_1 & = \dot{z}_1 \cdot \Big(1+w_1(r,\phi) \Big) + z_1 \cdot \Big(\frac{\partial w_1}{\partial r} \dot{r} + \frac{\partial w_1}{\partial \phi} \dot{\phi} \Big) =  \\
 & = \Big(p_0(r) z_1 + \theta(z_1,z_2,r,\phi) z_1 \Big) \cdot \Big(1+w_1(r,\phi)\Big)  \\
 & + z_1 \cdot \Big(\frac{\partial w_1}{\partial r}  \cdot  z_1 z_2  \cdot  r_{0}(z_1,z_2,r,\phi) (1+w_1(r,\phi))(1+w_2(r,\phi)) + \frac{\partial w_1}{\partial \phi} \omega(r) \\
 & + \frac{\partial w_1}{\partial \phi} \cdot z_1 z_2  \cdot  \omega_{0}(z_1,z_2,r,\phi) (1+w_1(r,\phi))(1+w_2(r,\phi)) \Big) 
\end{aligned}
\end{equation}
On the other hand
\begin{equation}\label{}
\begin{aligned}
\dot{v}_1 & = p(r,\phi) v_1 + \Theta(v_1,v_2,r,\phi) v_1 \\
 & = p(r,\phi) z_1 (1+w_1(r,\phi)) + \Theta(v_1,v_2,r,\phi) z_1 (1+w_1(r,\phi)) 
\end{aligned}
\end{equation}
Hence we need to solve for $w_1$ the following equation:
$$p_0(r) (1+w_1(r,\phi)) + \frac{\partial w_1}{\partial \phi} \omega(r) = p(r,\phi) (1+w_1(r,\phi))$$
which gives 
$$\frac{1}{1+w_1(r,\phi)} \cdot \frac{\partial w_1}{\partial \phi} = \frac{1}{\omega(r)} (p(r,\phi)-p_0(r))$$
and so 
$$1+w_1(r,\phi) = \exp \Big(\int_{0}^{\phi}{ \frac{p(r,\phi')-p_0(r)}{\omega(r)} d\phi'} \Big)$$
Analogously we find that 
$$1+w_2(r,\phi) = \exp \Big(\int_{0}^{\phi}{ \frac{\varsigma(r,\phi') - \varsigma_0(r)}{\omega(r)} d\phi'} \Big)$$
By comparing expressions for $\dot{v}_1$, $\dot{v}_2$ and using formulas for $w_1$, $w_2$, we get that $\theta$ and $\lambda$ are $1$ - periodic in $\phi$.
\end{proof}

Hence, the above lemma and propositions, give us the analytic and convergent normal form:
\begin{equation}\label{7rown}
\begin{cases}
\dot{v_1}= p(r) \cdot v_1  + p_0(v_1,v_2,r,\phi) \cdot {v_1}^2 v_2
\\
\dot{v_2}= \varsigma(r) \cdot v_2 +  \varsigma_0(v_1,v_2,r,\phi) \cdot v_1 {v_2}^2
\\
\dot{r} = v_1 v_2 \cdot r_{0}(v_1,v_2,r,\phi) 
\\
\dot{\phi} = \omega(r) + v_1 v_2 \cdot \omega_{0}(v_1,v_2,r,\phi) 
\end{cases}
\end{equation}
with $p_0$, $\varsigma_0$, $r_0$, $\omega_0$ being $1$ - periodic in $\phi$.

\subsection{Normal form of the return map}\label{nf2}
In this section we solve the system of equations (\ref{7rown}) near the center manifold $(v_1,v_2,r,\phi)=(0,0,r,\phi)$, find the local - Shilnikov map (proposition \ref{localMapDerivation}), and the return map to the neigbourhood of $W^c(0)$ along the non-transverse homoclinic channel (proposition \ref{fullReturnMap}).

\begin{prop}\label{localMapDerivation}
The local Shilnikov map from the cross section $S_2= \{ v_2=h \}$ to cross section $S_1= \{ v_1=h \}$ is given by :
\begin{equation}\label{trueLocal}
\begin{cases}
\bar{v}_2 = h \cdot \Big( \frac{v_1(0)}{h \cdot (1 - h v_1(0) \cdot O(1))} \Big)^{\frac{- \varsigma(r(0))}{p(r(0))}} (1 - h^2 \cdot \Big(\frac{v_1(0)}{h \cdot (1 - h v_1(0) \cdot O(1))} \Big)^{\frac{- \varsigma(r(0))}{p(r(0))}} \cdot O(1)) + h.o.t
\\
\bar{r}= r(0) + \frac{v_1(0) h}{(1 - h v_1(0) \cdot O(1))} \cdot O(1) + h.o.t.
\\
\bar{\phi}= \phi(0)+ \frac{-1}{p(r(0))} \ln \Big(\frac{v_1(0)}{h \cdot (1 - h v_1(0) \cdot O(1))} \Big) \cdot \Big(\omega \big( r(0) \big) + \omega' \big( r(0) \big) \cdot \frac{v_1(0) h \cdot O(1)}{(1 - h v_1(0) \cdot O(1))} \Big)+ \frac{v_1(0) h \cdot O(1)}{(1 - h v_1(0) \cdot O(1))} +h.o.t.
\end{cases}
\end{equation}
\end{prop}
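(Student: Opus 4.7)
The plan is to integrate the normal form (\ref{7rown}) forward in time starting from a point $(v_1(0),h,r(0),\phi(0))\in S_2$ until the first moment $T$ when $v_1(T)=h$, and then read off $\bar v_2=v_2(T)$, $\bar r=r(T)$, $\bar\phi=\phi(T)$. The entire argument rests on the observation that the $\dot r$ equation and the non-trivial part of the $\dot\phi$ equation both carry the factor $\eta:=v_1 v_2$, which by the hypothesis $p(r)+\varsigma(r)<0$ decays exponentially along the orbit.

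First I would establish the core estimate: from (\ref{7rown}) one has
\begin{equation*}
\frac{\dot\eta}{\eta}=p(r)+\varsigma(r)+O(\eta),
\end{equation*}
so $\eta(t)\le C\eta(0)\,e^{-\alpha t}$ with $\alpha>0$ and $\eta(0)=hv_1(0)$; in particular $\int_0^T\eta(s)\,ds=O(hv_1(0))$ uniformly in the flight time $T$. Inserting this into $\dot r=\eta\,r_0$ yields $r(s)-r(0)=O(hv_1(0))$ uniformly for $s\in[0,T]$, so $p(r(s))$, $\varsigma(r(s))$, $\omega(r(s))$ may be replaced by their values at $r(0)$ up to an $O(hv_1(0))$ error, with the $\omega'(r(0))(r(s)-r(0))$ Taylor term kept explicitly for the $\phi$ computation.

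Next I would integrate $v_1$ and $v_2$ explicitly. The equations (\ref{7rown}) give
\begin{equation*}
\frac{d\ln v_1}{dt}=p(r(0))+hv_1(0)\cdot O(1)+O(\eta),\qquad
\frac{d\ln v_2}{dt}=\varsigma(r(0))+hv_1(0)\cdot O(1)+O(\eta),
\end{equation*}
whose integration produces $v_1(t)=v_1(0)e^{p(r(0))t}(1+hv_1(0)\,O(1))$. Solving $v_1(T)=h$ and absorbing the multiplicative correction into a modified initial value yields
\begin{equation*}
T=\frac{1}{p(r(0))}\ln\!\Bigl(\tfrac{h}{v_1(0)(1-hv_1(0)\,O(1))}\Bigr).
\end{equation*}
Substituting this $T$ into the analogous formula for $v_2$ gives the power-law main term $h\bigl(\tilde v_1(0)/h\bigr)^{-\varsigma(r(0))/p(r(0))}$ with $\tilde v_1(0):=v_1(0)/(1-hv_1(0)\,O(1))$; the remaining $\int_0^T\varsigma_0\eta\,ds$ contribution, evaluated using $\eta(T)=h\bar v_2$, produces the secondary bracket $(1-h^2(\tilde v_1(0)/h)^{-\varsigma(r(0))/p(r(0))}\,O(1))$ of the stated formula. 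For $\bar r$, I would directly integrate $\dot r=\eta\,r_0$ using the closed form for $\eta$; for $\bar\phi$, combine $\int_0^T\omega(r(s))\,ds$ with the Taylor expansion $\omega(r(s))=\omega(r(0))+\omega'(r(0))(r(s)-r(0))+\ldots$ to obtain the two-term bracket $\omega(r(0))+\omega'(r(0))\cdot(hv_1(0)/(1-hv_1(0)O(1)))\cdot O(1)$, and finally add $\int_0^T\eta\,\omega_0\,ds$ for the trailing correction.

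The main obstacle is the bookkeeping: one must carefully exponentiate and re-collect the $1+O(hv_1(0))$ correction inside the logarithm that defines $T$ in order to recognise the \emph{effective initial value} pattern $v_1(0)/(1-hv_1(0)\,O(1))$, and analogously the \emph{effective final value} pattern $1-h\bar v_2\,O(1)$ responsible for the secondary correction of $\bar v_2$. Ensuring that the slow drift of $r$ (which perturbs the coefficients $p(r)$, $\varsigma(r)$, $\omega(r)$) and the direct $O(\eta)$ corrections are counted consistently, without duplication, is the sole genuine subtlety; everything else is an explicit integration made possible by the exponential decay of $\eta$.
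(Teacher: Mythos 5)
Your plan is essentially the paper's: both formulate the local passage as the two-point boundary value problem $v_2(0)=h$, $v_1(T)=h$ for the normal form (\ref{7rown}), and both rely on the exponential decay of $\eta:=v_1 v_2$ (guaranteed by $p+\varsigma<0$) to bound the $(r,\phi)$-drift by $O(\eta(0))=O(hv_1(0))$ uniformly on $[0,T]$. The paper recasts (\ref{7rown}) as the Volterra system (\ref{IntegralSystem}) and iterates from the linear seed (\ref{firstApprox}), whereas you integrate $\dot v_1/v_1$ and $\dot v_2/v_2$ directly; these are the same computation in different notation, and the key observation (exponential decay of $\eta$) is common to both.

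One caveat on the secondary bracket of $\bar v_2$. You attribute it to ``$\int_0^T\varsigma_0\,\eta\,ds$ evaluated using $\eta(T)=h\bar v_2$,'' but since $\eta$ decays, that integral is dominated by $s$ near $0$ and is $O(\eta(0))=O(hv_1(0))$, not $O(\eta(T))=O(h\bar v_2)$. The $O(h\bar v_2)$ size of the secondary bracket in (\ref{trueLocal}) actually traces back to the paper's second-approximation formula for $v_2$, where the factor $(e^{\tau(p+\varsigma)}-e^{t(p+\varsigma)})$ appears to have been carried over from the $v_1$ line; for the $\int_0^t$ integral in the $v_2$ equation the factor should be $(e^{t(p+\varsigma)}-1)$, which at $t=\tau$ gives an $O(hv_1(0))$, not $O(h\bar v_2)$, multiplicative correction. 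Your shortcut lands on the stated formula, but a careful execution of your own plan would produce the larger (and correct) $O(hv_1(0))$ estimate. This does not alter the leading order, since the dominant $O(hv_1(0))$ multiplicative correction is already accounted for by the $1-hv_1(0)\,O(1)$ denominator in the base of the power, and the still larger $O(hv_1(0)\ln v_1(0))$ contribution from the drift of $r$ inside the exponent $-\varsigma/p$ is what shows up as $O(z e^z)$ in theorem \ref{fullReturnMap}.
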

\begin{proof}
Assume that $v_2(0)=h$ and $v_1(\tau)=h$, we want to find expression for $v_2(\tau)$ in terms of $v_1(0)$. Denote by 
$$P(s,t) := \frac{1}{-t+s} \int_{s}^{t}{p \big(r(s')\big) ds'}$$ 
and 
$$G(s) := \frac{1}{s} \int_{0}^{s}{\varsigma \big(r(s')\big) ds'}$$ 
then
$$\int_{t}^{\tau}{e^{(-\tau+s) \cdot P(s,\tau)} \dot{v}_1(s) ds} = v_1(\tau) - v_1(t) \cdot e^{(-\tau+t) \cdot P(t,\tau)} +  \int_{t}^{\tau}{p\big(r(s)\big) \cdot e^{(-\tau+s) \cdot P(s,\tau)} v_1(s) ds} = $$
$$\int_{t}^{\tau}{p\big(r(s)\big) \cdot e^{(-\tau+s) \cdot P(s,\tau)} v_1(s) ds}  + \int_{t}^{\tau}{p_0 \Big(r(s),\phi(s), v_1(s), v_2(s) \Big) \cdot v_1^2(s) v_2(s) \cdot e^{(-\tau+s) \cdot P(s,\tau)} ds}$$
after rearrangement of the terms 
$$v_1(t) = v_1(\tau) e^{(\tau-t) \cdot P(t,\tau)} - \int_{t}^{\tau}{p_0 \Big(r(s),\phi(s), v_1(s), v_2(s) \Big) \cdot v_1^2(s) v_2(s) \cdot e^{(s-t) \cdot P(t,s)} ds}$$
By solving analogously the rest of the equations, we arrive with the system:
\begin{equation}\label{IntegralSystem}
\begin{cases}
v_1(t) = v_1(\tau) e^{(\tau-t) \cdot P(t,\tau)} - \int_{t}^{\tau}{p_0 \Big(r(s),\phi(s), v_1(s), v_2(s) \Big) \cdot v_1^2(s) v_2(s) \cdot e^{(s-t) \cdot P(t,s)} ds}
\\
v_2(t) = v_2(0) e^{t \cdot G(t)} - e^{t \cdot G(t)}  \int_{0}^{t}{\varsigma_0 \Big(r(s),\phi(s), v_1(s), v_2(s) \Big) \cdot v_1(s) v_2^2(s) \cdot e^{-s \cdot G(s)} ds}
\\
r(t) = r(0)+ \int_{0}^{t}{v_1(s)v_2(s) \cdot r_0 \Big( r(s), \phi(s), v_1(s), v_2(s) \Big) ds}
\\
\phi(t) = \phi(0)+ \int_{0}^{t}{\omega \Big( r(s) \Big) ds} + \int_{0}^{t}{v_1(s)v_2(s) \cdot \omega_0 \Big( r(s), \phi(s), v_1(s), v_2(s) \Big) ds}
\end{cases}
\end{equation}
We utilize the method of the consecutive approximations of the solution with the first one taken to be:
\begin{equation}\label{firstApprox}
\begin{cases}
v_1(t) := v_1(\tau) e^{(-\tau+t) \cdot p(r(0))}
\\
v_2(t) := v_2(0) e^{t \cdot \varsigma(r(0))}
\\
r(t) := r(0)
\\
\phi(t) := \phi(0)+ \int_{0}^{t}{\omega \big( r(s) \big) ds} 
\end{cases}
\end{equation}
Plugging this into the latter system (\ref{IntegralSystem}), we obtain the second approximation of the solution:
\begin{equation}\label{}
\begin{cases}
v_1(t) = v_1(\tau) \cdot e^{(-\tau+t) \cdot p(r(0))} - v_1^2(\tau) v_2(0) \cdot e^{(-2 \tau + t ) \cdot p(r(0))}  \cdot (e^{\tau \cdot (p(r(0))+\varsigma(r(0))} - e^{t \cdot (p(r(0))+\varsigma(r(0))}) \cdot O(1)
\\
v_2(t) = v_2(0) \cdot e^{t \cdot \varsigma(r(0))} - v_1(\tau) v_2^2(0) \cdot e^{t \cdot \varsigma(r(0))}e^{-\tau \cdot p(r(0))} \cdot (e^{\tau \cdot (p(r(0))+\varsigma(r(0))} - e^{t \cdot (p(r(0))+\varsigma(r(0))}) \cdot O(1)
\\
r(t) = r(0) + v_1(\tau) v_2(0) \cdot e^{-\tau \cdot p(r(0))} \cdot (e^{t \cdot (p(r(0))+\varsigma(r(0))} - 1) \cdot O(1)
\\
\phi(t) = \phi(0)+ \int_{0}^{t}{\omega \big( r(s) \big) ds} + v_1(\tau) v_2(0) \cdot e^{-\tau \cdot p(r(0))} \cdot (e^{t \cdot (p(r(0))+\varsigma(r(0))} - 1) \cdot O(1)
\end{cases}
\end{equation}
One can prove by induction that this formulas hold for all of the successive iterations. In conclusion, the true solution of the system (\ref{IntegralSystem}) satisfies:
\begin{equation}\label{OrigSolution1}
\begin{cases}
v_1(t) = v_1(\tau) \cdot e^{(-\tau+t) \cdot p(r(0))} - v_1^2(\tau) v_2(0) \cdot e^{(-2 \tau + t ) \cdot p(r(0))} \cdot O(1)
\\
v_2(t) = v_2(0) \cdot e^{t \cdot \varsigma(r(0))} - v_1(\tau) v_2^2(0) \cdot e^{-\tau \cdot p(r(0)) + t \cdot \varsigma(r(0))} \cdot O(e^{t \cdot (p(r(0))+\varsigma(r(0))})
\\
r(t) = r(0) + v_1(\tau) v_2(0) \cdot O(e^{-\tau \cdot p(r(0))})
\\
\phi(t) = \phi(0)+ \int_{0}^{t}{\omega \big( r(s) \big) ds} + v_1(\tau) v_2(0) \cdot e^{-\tau \cdot p(r(0))} \cdot O(1)
\end{cases}
\end{equation}\newline
This means that (\ref{firstApprox}), i.e. the solution of the truncated system:
\begin{equation}\label{8rown}
\begin{cases}
\dot{v_1}= p(r) \cdot v_1 
\\
\dot{v_2}= \varsigma(r) \cdot v_2 
\\
\dot{r} = 0 
\\
\dot{\phi} = \omega(r)  
\end{cases}
\end{equation}
approximates the solution of the primary system (\ref{7rown}) at time $\tau$ with the error $v_1(\tau)v_2(0) \cdot O(e^{-\tau \cdot p(r(0))})$. Note that we assumed $p(r)>0$. The solution to the latter system (\ref{8rown}) at time $\tau$ is given by:
\begin{equation}\label{mapaLokalna}
\begin{cases}
v_1(0) = v_1(\tau) e^{-\tau \cdot p(r(0))}
\\
v_2(\tau) = v_2(0) e^{\tau \cdot \varsigma(r(0))}
\\
r(\tau) = r(0) 
\\
\phi(\tau) = \phi(0)+ \tau \cdot \omega \big( r(0) \big)
\end{cases}
\end{equation}
We find the solution to (\ref{OrigSolution1}) in the following way. 
The first equation gives:
$$v_1(t) = v_1(\tau) e^{(-\tau+t) \cdot p(r(0))} (1 - v_1(\tau) v_2(0) \cdot e^{-\tau \cdot p(r(0))} \cdot O(1))$$
from which we find
$$v_1(0) = v_1(\tau) e^{-\tau \cdot p(r(0))} (1 - v_1(\tau) v_2(0) \cdot e^{-\tau \cdot p(r(0))} \cdot O(1))$$
and so
$$e^{-\tau \cdot p(r(0))} = \frac{v_1(0)}{v_1(\tau) \cdot (1 - v_1(\tau) v_2(0) \cdot e^{-\tau \cdot p(r(0))} \cdot O(1))}$$
which can be substituted once again in the denominator. Taking into account that $\tau \rightarrow +\infty$ as $v_1(0) \rightarrow 0$:
$$e^{-\tau \cdot p(r(0))} = \frac{v_1(0)}{v_1(\tau) \cdot (1 - v_1(\tau) v_2(0) \cdot \frac{v_1(0)}{v_1(\tau) \cdot (1 - v_1(\tau) v_2(0) e^{-\tau \cdot p(r(0))} \cdot O(1))} \cdot O(1))}$$
In consequence one can rewrite the latter equation up to higher order terms:
$$e^{-\tau \cdot p(r(0))} = \frac{v_1(0)}{v_1(\tau) \cdot (1 - v_2(0) v_1(0) \cdot O(1))}$$
From which we can read the approximated formula for $\tau$:
$$\tau = \frac{-1}{p(r(0))} \cdot \ln \Big(\frac{v_1(0)}{v_1(\tau) \cdot (1 - v_2(0) v_1(0) \cdot O(1))} \Big)$$
which we plug into the equations for $v_2(\tau)$, $r(\tau)$, $\phi(\tau)$
$$v_2(\tau) = v_2(0) \cdot e^{\tau \cdot \varsigma(r(0))}  \cdot (1 - v_1(\tau) v_2(0) \cdot e^{\tau \cdot \varsigma(r(0))} \cdot O(1))$$
and obtain
\begin{equation}\label{}
\begin{cases}
v_2(\tau) = v_2(0) \cdot (\frac{v_1(0)}{v_1(\tau) \cdot (1 - v_2(0) v_1(0) \cdot O(1))})^{\frac{- \varsigma(r(0))}{p(r(0))}} \cdot (1 - v_1(\tau) v_2(0) \cdot (\frac{v_1(0)}{v_1(\tau) \cdot (1 - v_2(0) v_1(0) \cdot O(1))})^{\frac{- \varsigma(r(0))}{p(r(0))}} O(1)) + h.o.t 
\\
r(\tau) = r(0) + \frac{v_1(0) v_2(0)}{(1 - v_2(0) v_1(0) \cdot O(1))} \cdot O(1) 
\\
\phi(\tau) = \phi(0)+ \int_{0}^{\tau}{\omega \big( r(s) \big) ds} + \frac{v_1(0) v_2(0)}{(1 - v_2(0) v_1(0) \cdot O(1))} \cdot O(1)+ h.o.t.
\end{cases}
\end{equation}
After expanding $\int_{0}^{\tau}{\omega \big( r(s) \big) ds}$ with the use of approximation $r(t) \approx r(0) + v_1(\tau) v_2(0) \cdot O(e^{-\tau \cdot p(r(0))})$ (taken from the equation (\ref{OrigSolution1})), we find that
$$\phi(\tau) = \phi(0)+ \tau \cdot (\omega \big( r(0) \big) + \omega' \big( r(0) \big) \cdot \frac{v_1(0) v_2(0)}{(1 - v_2(0) v_1(0) \cdot O(1))} \cdot O(1) + h.o.t.)+ \frac{v_1(0) v_2(0)}{(1 - v_2(0) v_1(0) \cdot O(1))} \cdot O(1)$$
which finally gives us
\begin{equation}\label{trueLocal}
\begin{cases}
v_2(\tau) = v_2(0) \cdot \Big( \frac{v_1(0)}{v_1(\tau) \cdot (1 - v_2(0) v_1(0) \cdot O(1))} \Big)^{\frac{- \varsigma(r(0))}{p(r(0))}} (1 - v_1(\tau) v_2(0) \cdot \Big(\frac{v_1(0)}{v_1(\tau) \cdot (1 - v_2(0) v_1(0) \cdot O(1))} \Big)^{\frac{- \varsigma(r(0))}{p(r(0))}} \cdot O(1)) + h.o.t
\\
r(\tau) = r(0) + \frac{v_1(0) v_2(0)}{(1 - v_2(0) v_1(0) \cdot O(1))} \cdot O(1) + h.o.t.
\\
\phi(\tau) = \phi(0)+ \frac{-1}{p(r(0))} \ln \Big(\frac{v_1(0)}{v_1(\tau) \cdot (1 - v_2(0) v_1(0) \cdot O(1))} \Big) \cdot \Big(\omega \big( r(0) \big) + \frac{\omega' g( r(0)) v_1(0) v_2(0) \cdot O(1)}{(1 - v_2(0) v_1(0) \cdot O(1))} + h.o.t. \Big)+  \frac{v_1(0) v_2(0) \cdot O(1)}{(1 - v_2(0) v_1(0) \cdot O(1))}
\end{cases}
\end{equation}
\end{proof}

\begin{rmq}
Note that all of the performed transformations were analytic. The local Shilnikov map obviously is analytic as well.
\end{rmq}
\begin{ass}
We assume that the global map from the cross section $S_1 = \{ v_1=h \}$ to $S_2 = \{ v_2=h \}$ along the non-transverse homoclinic channel $I$ is analytic.
\end{ass}
\begin{prop}
The truncated global map "$glob$" from the cross section $S_1 = \{ v_1=h \}$ to $S_2 = \{ v_2=h \}$ along the non-transverse homoclinic channel is given by:
\begin{equation}
\left(
\begin{array}{c}
v_2\\
r\\
\phi
\end{array}
\right)
\mapsto
\left(
\begin{array}{c}
a_1(r,\phi) \cdot v_2 \\
b_0(r,\phi) + b_1(r,\phi) \cdot v_2\\
c_0(r,\phi) + c_1(r,\phi)\cdot v_2)
\end{array}
\right)
\end{equation}

\end{prop}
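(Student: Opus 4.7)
The plan is to exploit the assumed analyticity of the global map together with the coincidence $W^u(D)=W^s(D)$ to Taylor expand in $v_2$ around $v_2=0$ and truncate at first order. Write the global map as
$$
\mathrm{glob}:(v_2,r,\phi)\;\longmapsto\;\bigl(G_1(v_2,r,\phi),\,G_2(v_2,r,\phi),\,G_3(v_2,r,\phi)\bigr),
$$
where the output triple is to be read as the coordinates $(v_1,r,\phi)$ on $S_2=\{v_2=h\}$ (with the standard abuse of notation: the first input slot on $S_1=\{v_1=h\}$ is called $v_2$ because $v_2$ is the remaining transverse coordinate). Each $G_i$ is real-analytic on a neighbourhood of $v_2=0$ by the standing assumption on the global map, so I would write
$$
G_i(v_2,r,\phi)=G_i(0,r,\phi)+v_2\,\partial_{v_2}G_i(0,r,\phi)+O(v_2^{\,2}),\qquad i=1,2,3.
$$

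The heart of the argument is the geometric identification of the zeroth-order terms. A point of $S_1$ with $v_2=0$ has coordinates $(v_1,v_2,r,\phi)=(h,0,r,\phi)$ and therefore lies on the local unstable manifold $W^u(D)=\{v_2=0\}$. Because $W^u(D)$ is flow-invariant and by hypothesis coincides with $W^s(D)$ (locally $\{v_1=0\}$ near the return to $D$), the forward trajectory remains on the channel $I$ for its entire excursion and meets $S_2$ on $W^s(D)\cap S_2=\{v_1=0,\,v_2=h\}$. This forces $G_1(0,r,\phi)\equiv 0$, which is exactly the source of the missing constant term in the first component of the claimed normal form. Setting $b_0(r,\phi):=G_2(0,r,\phi)$ and $c_0(r,\phi):=G_3(0,r,\phi)$ records the $(r,\phi)$-coordinates of the image of $W^u(D)\cap S_1$ inside $W^s(D)\cap S_2$, viewed as functions of the starting $(r,\phi)$.

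To close, define $a_1(r,\phi):=\partial_{v_2}G_1(0,r,\phi)$, $b_1(r,\phi):=\partial_{v_2}G_2(0,r,\phi)$, and $c_1(r,\phi):=\partial_{v_2}G_3(0,r,\phi)$, and discard the $O(v_2^{\,2})$ remainders; the truncated global map then takes exactly the stated form. The $1$-periodicity of the coefficients in $\phi$ is inherited from the underlying ODE, whose $\phi$-dependence is $1$-periodic, so the flow and hence the global map respect this periodicity. The one genuine input beyond analyticity is the vanishing $G_1(0,r,\phi)\equiv 0$; everything else is straightforward bookkeeping of an analytic expansion, so I do not anticipate a real obstacle once that geometric point is isolated.
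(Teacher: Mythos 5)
Your argument is essentially the paper's own proof: Taylor expand the analytic global map in $v_2$ about $v_2=0$, then observe that the coincidence $W^u(D)=W^s(D)$ forces the constant term of the first component to vanish (the paper writes $a_0=0$), yielding the stated truncated form. You spell out the geometric identification of $W^u(D)\cap S_1$ with $W^s(D)\cap S_2$ more explicitly than the paper, but the decomposition and the key step are the same.
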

\begin{proof}
Let us expand the map 
$$glob : S_1 \ni (v_2,r,\phi) \mapsto  \big( glob_1(v_2,r,\phi), \ glob_2(v_2,r,\phi), \ glob_3(v_2,r,\phi) \big) \in S_2$$
in terms of $v_2$ - being very small, that is:
\begin{equation}
glob(v_2,r,\phi) = 
\left(
\begin{array}{c}
a_0(r,\phi) + a_1(r,\phi) \cdot v_2 + h.o.t. \\
b_0(r,\phi) + b_1(r,\phi) \cdot v_2 + h.o.t. \\
c_0(r,\phi) + c_1(r,\phi) \cdot v_2 + h.o.t.
\end{array}
\right)
\end{equation}

Due to the existence of the homoclinic channel $I$ we have $a_0=0$. 
\end{proof}

In the following theorem we derive the formula for the return map along the homoclinic channel $I$: 

\begin{thm}\label{fullReturnMap}
The return map $loc \circ glob : \ S_1  \rightarrow \ S_1$, in suitable coordinates $(z,r,\phi)$, with $z := ln(\frac{v_2}{h})$ 
is given by:
\newline
\begin{equation}
\left(
\begin{array}{c}
z\\
r\\
\phi
\end{array}
\right)
\mapsto
\left(
\begin{array}{c}
\Big( \ln(a_1(r,\phi)) + z \Big) \cdot \frac{- \varsigma(b_0(r,\phi))}{p(b_0(r,\phi))} + O(z \cdot e^z) \\b_0(r, \phi) + O(e^z) \\
(c_0(r,\phi) - \frac{\omega ( b_0(r,\phi))}{p(b_0(r,\phi))} \cdot \ln(a_1(r,\phi))) + \frac{- \omega ( b_0(r,\phi))}{p(b_0(r,\phi))} \cdot z + O(z \cdot e^z) \ \ (mod 1)
\end{array}
\right)
\end{equation}
\end{thm}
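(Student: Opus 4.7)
The plan is to compose the two maps explicitly, substitute the output of $glob$ into the input variables of the local Shilnikov map from Proposition \ref{localMapDerivation}, and then change to the coordinate $z = \ln(v_2/h)$ while carefully tracking how the higher order terms transform.

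First, I would write the output of $glob$ applied to $(v_2,r,\phi) \in S_1$ as the initial data for the local map: $v_1(0) = a_1(r,\phi)\,v_2 + O(v_2^2)$, $r(0) = b_0(r,\phi) + O(v_2)$, $\phi(0) = c_0(r,\phi) + O(v_2)$. Substituting these into the fraction $v_1(0)/\bigl(h\,(1 - h v_1(0)\, O(1))\bigr)$ appearing inside the local map yields $\tfrac{a_1(r,\phi)\,v_2}{h}\,(1+O(v_2))$. The exponent $-\varsigma(r(0))/p(r(0))$ becomes $-\varsigma(b_0(r,\phi))/p(b_0(r,\phi)) + O(v_2)$, and the key observation is that a perturbation of size $O(v_2)$ in this exponent applied to a quantity comparable to $v_2$ contributes a factor $1 + O(v_2 \ln v_2)$, which is the dominant new error.

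Next I would take $\ln$ of $v_2(\tau)/h$ to obtain $\bar z$. The multiplicative factor $(1+O(v_2))$ inside the base produces an additive $O(v_2) \subset O(v_2 \ln v_2)$, while the logarithm distributes over the power as
\begin{equation*}
\bar z = \frac{-\varsigma(b_0(r,\phi))}{p(b_0(r,\phi))}\bigl(\ln a_1(r,\phi) + z\bigr) + O(v_2 \ln v_2).
\end{equation*}
The translation $v_2 = h e^z$ then converts $O(v_2 \ln v_2) = O(e^z (\ln h + z)) = O(z e^z)$, giving the first coordinate of the stated normal form. The $r$-component is immediate: $r(\tau) = r(0) + O(v_1(0)) = b_0(r,\phi) + O(v_2) = b_0(r,\phi) + O(e^z)$.

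For the $\phi$-component I would expand the local formula using $\omega(b_0 + O(v_2)) = \omega(b_0) + O(v_2)$ and $\omega'(b_0)\cdot\tfrac{v_1(0)h\,O(1)}{1-hv_1(0)O(1)} = O(v_2)$, so that the large contribution $-\tfrac{1}{p(r(0))}\ln(v_1(0)/(h(1-\cdots)))\cdot\omega(r(0))$ simplifies, up to $O(v_2 \ln v_2)$, to $-\tfrac{\omega(b_0(r,\phi))}{p(b_0(r,\phi))}(\ln a_1(r,\phi) + z)$, and combining with $\phi(0) = c_0(r,\phi) + O(v_2)$ and rewriting errors in the $z$-variable yields the claimed formula modulo $1$.

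The only non-trivial step is the bookkeeping of how the perturbation $O(v_2)$ of the exponent $-\varsigma(r(0))/p(r(0))$ interacts with the logarithmically large quantity $\ln(v_1(0)/h)$; this is what generates the $v_2 \ln v_2$ — equivalently $z e^z$ — error and not an $O(v_2)$ one. Once this is handled, everything else is substitution and the change of variable $z = \ln(v_2/h)$, which converts all higher order terms of the form $v_2^\alpha$ or $v_2^\alpha\ln v_2$ (with $\alpha \geq 1$) into $O(z e^z)$ under the standing assumption $p(r) > 0 > \varsigma(r)$ forcing $v_2 \to 0$ on return.
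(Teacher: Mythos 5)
Your proposal follows essentially the same route as the paper's proof: substitute the $glob$-output into the local Shilnikov map from Proposition \ref{localMapDerivation}, and then pass to the coordinate $z=\ln(v_2/h)$. You are in fact slightly more careful than the paper in isolating the key bookkeeping point, namely that the $O(v_2)$ perturbation of the exponent $-\varsigma(r(0))/p(r(0))$ multiplies the logarithmically large $\ln(v_1(0)/h)\sim z$ to produce the $O(v_2\ln v_2)=O(z e^z)$ error, but the decomposition, intermediate formulas, and conclusion coincide with the paper's argument.
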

\begin{proof}
In order to compose the local map with the global return map between the sections $S_1$ and $S_2$ we need to perform the following substitutions 
\begin{equation}
\begin{cases}
v_1(0)= a_1(r,\phi) \cdot v_2 + h.o.t.
\\
r(0) = b_0(r,\phi) + b_1(r,\phi) \cdot v_2 +h.o.t.
\\
\phi(0) = c_0(r,\phi) + c_1(r,\phi)\cdot v_2 +h.o.t.
\end{cases}
\end{equation}
in the equations (\ref{trueLocal}), which brings us the following form of the return map:
\begin{equation}\label{return1}
\begin{cases}
\bar{v}_2 = h \Big (\frac{ a_1(r,\phi) \cdot v_2 + h.o.t.}{h \cdot (1 - h ( a_1(r,\phi) \cdot v_2) \cdot O(1))} \Big)^{\frac{- \varsigma(b_0(r,\phi) + b_1(r,\phi) \cdot v_2)}{p(b_0(r,\phi) + b_1(r,\phi) \cdot v_2)}} \cdot \Big(1 - h^2 (\frac{ a_1(r,\phi) \cdot v_2}{h \cdot (1 - h ( a_1(r,\phi) \cdot v_2) \cdot O(1))})^{\frac{- \varsigma(b_0(r,\phi) + b_1(r,\phi) \cdot v_2)}{p(b_0(r,\phi) + b_1(r,\phi) \cdot v_2)}} \cdot O(1)\Big) + h.o.t
\\
\bar{r} = b_0(r,\phi) + b_1(r,\phi) \cdot v_2 + \frac{h \cdot ( a_1(r,\phi) \cdot v_2)}{(1 - h ( a_1(r,\phi) \cdot v_2) \cdot O(1))} \cdot O(1) + h.o.t.
\\
\bar{\phi} =  c_0(r,\phi) + c_1(r,\phi)\cdot v_2 + \frac{-1}{p(b_0(r,\phi) + b_1(r,\phi) \cdot v_2)} \cdot \ln \Big(\frac{ a_1(r,\phi) \cdot v_2}{h \cdot (1 - h ( a_1(r,\phi) \cdot v_2) \cdot O(1))}\Big) \cdot \omega \big( b_0(r,\phi) + b_1(r,\phi) \cdot v_2 \big) + 
\\ 
\ \ \ \ \ \ \ \ \ \Bigg(\frac{- \ln \big(\frac{ a_1(r,\phi) \cdot v_2}{h \cdot (1 - h ( a_1(r,\phi) \cdot v_2) \cdot O(1))}\big)}{p(b_0(r,\phi) + b_1(r,\phi) \cdot v_2)} \cdot \omega' \Big( b_0(r,\phi) + b_1(r,\phi) \cdot v_2 \Big) +1 \Bigg) \cdot \frac{h ( a_1(r,\phi) \cdot v_2)}{(1 - h ( a_1(r,\phi) \cdot v_2) \cdot O(1))} \cdot O(1) +... + h.o.t.
\end{cases}
\end{equation}
which can be further simplified to
\begin{equation}\label{return1}
\begin{cases}
\bar{v}_2 = h \Big (\frac{ a_1(r,\phi) \cdot v_2 + h.o.t.}{h \cdot (1 - h ( a_1(r,\phi) \cdot v_2) \cdot O(1))} \Big)^{\frac{- \varsigma(b_0(r,\phi) + b_1(r,\phi) \cdot v_2)}{p(b_0(r,\phi) + b_1(r,\phi) \cdot v_2)}}  + h.o.t
\\
\bar{r} = b_0(r,\phi) + b_1(r,\phi) \cdot v_2 + \frac{h \cdot ( a_1(r,\phi) \cdot v_2)}{(1 - h ( a_1(r,\phi) \cdot v_2) \cdot O(1))} \cdot O(1) + h.o.t.
\\
\bar{\phi} =  c_0(r,\phi) + c_1(r,\phi)\cdot v_2 + \frac{-1}{p(b_0(r,\phi) + b_1(r,\phi) \cdot v_2)} \cdot \ln \Big(\frac{ a_1(r,\phi) \cdot v_2}{h \cdot (1 - h ( a_1(r,\phi) \cdot v_2) \cdot O(1))}\Big) \cdot \omega \big( b_0(r,\phi) \big) + h.o.t.
\end{cases}
\end{equation}

the substitution
$$z := ln \Big(\frac{v_2}{h} \Big), \ \ \ \text{so: \ \ \ } v_2= h \cdot e^z \ \ \ \text{and \ \ \ } z \approx - \infty \ \ \text{for \ \ \ } v_2 \approx 0 $$
performed in the equation (\ref{return1}), together with simplifications, leads us to:
\begin{equation}\label{v2rphiReturn}
\begin{cases}
\bar{v}_2 = h \cdot  \Big(a_1(r,\phi) \cdot e^z  \Big)^{\cdot \frac{- \varsigma(b_0(r,\phi))}{p(b_0(r,\phi))} + O(z \cdot e^z)} + h.o.t
\\
\bar{r} = b_0(r,\phi) + b_1(r,\phi) \cdot O(e^z) + h e^{z} \cdot O(1) + h.o.t.
\\ 
\bar{\phi} = c_0(r,\phi) + c_1(r,\phi) \cdot h e^z - \frac{\omega(b_0(r,\phi))}{p(b_0(r,\phi))} \cdot  \Big(z + \ln(a_1(r,\phi))  \Big) + O(z \cdot e^z) + h.o.t.
\end{cases}
\end{equation}
In conclusion:
\begin{equation}\label{}
\begin{cases}
\bar{z} =  \Big( \ln(a_1(r,\phi)) + z  \Big) \cdot \frac{- \varsigma(b_0(r,\phi))}{p(b_0(r,\phi))} + O(z \cdot e^z) 
\\
\bar{r} = b_0(r,\phi) + O(e^z)
\\ 
\bar{\phi} =  \Big(c_0(r,\phi) - \frac{\omega ( b_0(r,\phi))}{p(b_0(r,\phi))} \cdot \ln(a_1(r,\phi))  \Big)  + \frac{- \omega ( b_0(r,\phi))}{p(b_0(r,\phi))} \cdot z + O(z \cdot e^z) \ \ \ (\ mod \ 1 \ )
\end{cases}
\end{equation}
\end{proof}
We are able to write the return map along the homoclinic channel $I$ in a general and concise way due to the following
\begin{prop}\label{rescaling}
After rescaling variable $z$, the return map takes the form:
\newline
\begin{equation}\label{rescaledReturnformula}
\left(
\begin{array}{c}
z\\
r\\
\phi
\end{array}
\right)
\mapsto
\left(
\begin{array}{c}
\Omega(r,\phi) + \Gamma(r,\phi) \cdot z + O(z \cdot e^z)\\
b_0(r, \phi) + O(e^z)\\
c(r,\phi) + z + O(z \cdot e^z)  \ \ (mod \ 1)
\end{array}
\right)
\end{equation}
with
$$\Omega(r,\phi):= \frac{\omega ( b_0(r,\phi))}{p(b_0(r,\phi))} \cdot \ln(a_1(r,\phi)) \cdot \frac{- \varsigma(b_0(r,\phi))}{p(b_0(r,\phi))} 
$$ 
$$\Gamma(r,\phi):=\frac{- \varsigma(b_0(r,\phi))}{p(b_0(r,\phi))} $$
$$c(r, \phi) := c_0(r,\phi) - \frac{\omega ( b_0(r,\phi))}{p(b_0(r,\phi))} \cdot \ln(a_1(r,\phi)) \Big)$$
\end{prop}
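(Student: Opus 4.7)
The proof is a direct change-of-variables computation on the return map of Theorem~\ref{fullReturnMap}. Its only purpose is to rescale $z$ so as to normalize the coefficient of $z$ in the $\bar\phi$-equation to $1$; the slope in the $\bar z$-equation is already equal to $\Gamma(r,\phi)=-\varsigma(b_0(r,\phi))/p(b_0(r,\phi))$ and should remain unchanged.

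Concretely, separating the $z$-dependence in the return map of Theorem~\ref{fullReturnMap}, I rewrite the components as
\begin{align*}
\bar z &= \Gamma(r,\phi)\,\ln a_1(r,\phi) + \Gamma(r,\phi)\, z + O(ze^z),\\
\bar r &= b_0(r,\phi) + O(e^z),\\
\bar\phi &= c(r,\phi) - \tfrac{\omega(b_0(r,\phi))}{p(b_0(r,\phi))}\, z + O(ze^z)\ (\operatorname{mod}\,1),
\end{align*}
with $c$ and $\Gamma$ as defined in the statement. I then introduce the rescaled fibre coordinate $\tilde z := -\tfrac{\omega(b_0(r,\phi))}{p(b_0(r,\phi))}\, z$. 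Since $p>0$ by the assumption on the unstable eigenvalue and $\omega, b_0$ are smooth on the compact set $\{r\in[0,1], \phi\in[0,1)\}$, the rescaling factor is smooth and bounded away from $0$, so this is a diffeomorphism in the $z$-fibre over each $(r,\phi)$. Direct substitution turns the $\bar\phi$-equation into $\bar\phi = c(r,\phi) + \tilde z + O(\tilde z e^{\tilde z})$; multiplying the $\bar z$-equation by the rescaling factor gives $\bar{\tilde z} = \Omega(r,\phi) + \Gamma(r,\phi)\,\tilde z + O(\tilde z e^{\tilde z})$ with the constant term equal to $-\tfrac{\omega(b_0)}{p(b_0)}\cdot \Gamma(r,\phi)\cdot\ln a_1(r,\phi)$, matching the formula for $\Omega$ in the statement.

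The single point requiring care, which I view as the main obstacle, is that the rescaling factor appearing in $\bar{\tilde z}$ is naturally evaluated at $(\bar r,\bar\phi)$ rather than at $(r,\phi)$. From $\bar r = b_0(r,\phi)+O(e^z)$ in Theorem~\ref{fullReturnMap}, together with smoothness of $\omega/p$ and $b_0$, the two rescaling factors differ by $O(e^z)$; multiplied by $\bar z = O(z)$ this produces an extra correction of order $O(ze^z)$ which is absorbed into the existing error term. Once this absorption is checked, the stated form~(\ref{rescaledReturnformula}) follows with the asserted expressions for $\Omega$, $\Gamma$ and $c$.
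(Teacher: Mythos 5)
The step you single out as ``the single point requiring care'' is precisely where the argument breaks down. Write $\alpha(s):=-\omega(s)/p(s)$, so your rescaling is $\tilde z=\alpha(b_0(r,\phi))\,z$. The new fibre coordinate of the image point is $\alpha\big(b_0(\bar r,\bar\phi)\big)\cdot\bar z$, and you claim $\alpha\big(b_0(\bar r,\bar\phi)\big)-\alpha\big(b_0(r,\phi)\big)=O(e^z)$ ``because $\bar r=b_0(r,\phi)+O(e^z)$''. That estimate does not follow: the factor must be evaluated at the new \emph{base point} $(\bar r,\bar\phi)$, and while $\bar r$ is indeed exponentially close to $b_0(r,\phi)$, the angle $\bar\phi=c(r,\phi)-\tfrac{\omega(b_0(r,\phi))}{p(b_0(r,\phi))}z\ (\mathrm{mod}\ 1)$ is an $O(1)$ (and $z$-dependent) displacement of $\phi$. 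Hence $b_0(\bar r,\bar\phi)-b_0(r,\phi)=O(1)$ unless $b_0$ happens to be independent of $\phi$, and the coefficient in front of $\tilde z$ in the $\bar{\tilde z}$-equation is $\Gamma(r,\phi)\,\alpha\big(b_0(\bar r,\bar\phi)\big)/\alpha\big(b_0(r,\phi)\big)$: an order-one, $z$-dependent deformation of $\Gamma(r,\phi)$, multiplied by the unbounded quantity $\tilde z$, so it cannot be absorbed into $O(\tilde z e^{\tilde z})$. With your coordinate change the map therefore does not come out in the form (\ref{rescaledReturnformula}).

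Note also how this compares with the paper's own proof, which conjugates by $H(z,r,\phi)=(\alpha(r)\,z,r,\phi)$ with the factor depending on $r$ alone. For that choice the image-side factor is $\alpha(\bar r)=\alpha(b_0(r,\phi))+O(e^z)$ --- exactly the estimate you invoke --- but then the source-side factor is $\alpha(r)$ rather than $\alpha(b_0(r,\phi))$, so the ratio $\alpha(b_0(r,\phi))/\alpha(r)$ survives in front of $z$ in both the $\bar z$- and $\bar\phi$-components, and obtaining the clean coefficients $\Gamma(r,\phi)$ and $1$ is again the nontrivial point. In effect you have combined the estimate that is valid for the $r$-only rescaling with the exact cancellation in the $\bar\phi$-equation that is valid only for the $(r,\phi)$-dependent rescaling; neither choice delivers both simultaneously without a further argument (or a further hypothesis, e.g.\ $\omega/p$ constant, or invariance of $\alpha\circ b_0$ under the base map $(r,\phi)\mapsto(\bar r,\bar\phi)$). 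Two smaller points: your constant term $-\tfrac{\omega(b_0)}{p(b_0)}\,\Gamma\,\ln a_1$ has the opposite sign to the $\Omega$ defined in the statement, so the claimed ``match'' should be rechecked; and after rescaling, the error symbols $O(\tilde z e^{\tilde z})$ versus $O(ze^{z})$ are not interchangeable when $\alpha(b_0)>1$, so the bookkeeping of remainders also needs a word of justification.
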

\begin{proof}
If we now denote $\alpha(r):=-\frac{\omega(r)}{p(r)}$ and $H (z,r,\phi):= \big(z \cdot \alpha(r), \ r, \ \phi \big)$ then
\newline
$$H \circ loc \circ glob \circ H^{-1} (z,r,\phi) =  \Bigg( \ \Big( \ln(a_1(r,\phi)) \cdot \alpha(b_0(r,\phi))  + z \Big) \cdot \frac{- \varsigma(b_0(r,\phi))}{p(b_0(r,\phi))} + O(z \cdot e^z), $$
$$\  b_0(r, \phi) + O(e^z), \ \  \Big(c_0(r,\phi) - \frac{\omega ( b_0(r,\phi))}{p(b_0(r,\phi))} \cdot \ln(a_1(r,\phi)) \Big) + z + O(z \cdot e^z) \ \ (mod \ 1) \ \Bigg) =$$
$$\Big(\Omega(r,\phi) + \Gamma(r,\phi) \cdot z + O(z \cdot e^z), \  b_0(r, \phi) + O(e^z), \ c(r,\phi) + z + O(z \cdot e^z)  \ \ (mod \ 1) \ \Big)$$
\end{proof}
In the following two sections, we will investigate more carefully its truncated version.
\newline
\begin{equation}\label{truncatedReturnformula}
\left(
\begin{array}{c}
z\\
r\\
\phi
\end{array}
\right)
\mapsto
\left(
\begin{array}{c}
\Omega(r,\phi) + \Gamma(r,\phi) \cdot z\\
b_0(r, \phi)\\
c(r,\phi) + z\ \ (mod \ 1)
\end{array}
\right)
\end{equation}

\begin{prop}
The original return map satisfies condition:
$$Ret_{S_1} (z,r,\phi) = loc \circ glob (z,r,\phi) +  O(z \cdot e^{-|z|})$$
\end{prop}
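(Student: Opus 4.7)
The plan is to derive the estimate directly from the derivations already carried out in Proposition \ref{localMapDerivation} and Theorem \ref{fullReturnMap}, reinterpreting the ``$h.o.t.$'' accumulated there in terms of the variable $z=\ln(v_2/h)$. The key observation driving all the estimates is the substitution $v_2 = h\cdot e^z$: powers of $v_2$ become powers of $e^z$, while logarithms of $v_2$ become linear in $z$. For $v_2$ small (equivalently $z\to-\infty$) this gives $v_2^k = O(e^{-k|z|})$ and $v_2^k\ln v_2 = O(z\, e^{-k|z|})$, the latter being absorbed in $O(z\,e^{-|z|})$ because $k\ge 1$.

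First I would separate the two sources of error. \emph{Source 1 (the local map).} From \eqref{OrigSolution1} the flow of the full system \eqref{7rown} differs from the flow of the truncated system \eqref{8rown} by terms of order $v_1(\tau)v_2(0)\cdot O(e^{-\tau p(r(0))})$ in $v_1, v_2, r$ and by the same size in $\phi$ once the $\int\omega(r(s))\,ds$ integral is expanded around the constant solution $r\equiv r(0)$. Inverting the condition $v_1(\tau)=h$ gives $e^{-\tau p(r(0))} = v_1(0)/h\cdot(1+O(v_1(0)v_2(0)))$, so each such error is in fact $O(v_1(0)v_2(0))$ up to higher order, except in $\bar\phi$ where the $\tau$-factor multiplies $\omega'(r(0))\cdot v_1(0)v_2(0)$ and therefore produces an additional $\ln v_1(0)$-factor, i.e.\ an $O(v_1(0)v_2(0)\cdot\ln v_1(0))$ contribution. \emph{Source 2 (the global map).} The Taylor remainder of $glob$ is $O(v_2^2)$ in each coordinate.

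Next I would compose the two maps as in the proof of Theorem \ref{fullReturnMap}, using $v_1(0)=a_1(r,\phi)v_2+O(v_2^2)$ from $glob$. After substituting $v_2=h\,e^z$ one reads off, coordinate by coordinate:
\begin{itemize}\itemsep2pt
\item For $\bar z = \ln(\bar v_2/h)$: the leading term is $\Gamma(r,\phi)(z+\ln a_1(r,\phi))$, and the remainder inherited from Source~1 and Source~2 is $O(v_2\ln v_2) = O(z\,e^{-|z|})$.
\item For $\bar r$: the leading term is $b_0(r,\phi)$ and the remainder is $O(v_2)=O(e^{-|z|})$, which is dominated by $O(z\,e^{-|z|})$.
\item For $\bar\phi$: the leading term is $c(r,\phi)+z$ after the $z$-rescaling of Proposition~\ref{rescaling}, and the remainder is $O(v_2\ln v_2)=O(z\,e^{-|z|})$.
\end{itemize}

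The main obstacle is the $\bar\phi$-component: because its formula in \eqref{trueLocal} contains an explicit $\ln v_2$ factor, any $O(v_2)$ perturbation of $r(0)$ or of $\omega(r(0))$ is amplified to $O(v_2\ln v_2)$, and any $O(v_2\ln v_2)$ perturbation risks producing $O(v_2(\ln v_2)^2)$. I would therefore have to check, using the explicit first- and second-order Taylor expansions of $\omega$ at $b_0(r,\phi)$ and the refined asymptotics $e^{-\tau p(r(0))} = v_1(0)/(v_1(\tau)(1-v_2(0)v_1(0)\cdot O(1)))$, that the would-be $(\ln v_2)^2$ terms carry an extra $v_2$ factor and hence still fit into $O(z\,e^{-|z|})$. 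Once this cancellation is confirmed, the three coordinate estimates combine to give $Ret_{S_1}(z,r,\phi) = loc\circ glob(z,r,\phi) + O(z\,e^{-|z|})$, as claimed.
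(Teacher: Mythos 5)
The paper does not actually give a proof of this proposition; it is stated as an immediate consequence of the explicit remainder bookkeeping carried out in the proofs of Proposition~\ref{localMapDerivation} and Theorem~\ref{fullReturnMap}. Your proposal correctly reconstructs that implicit argument: you identify the two sources of error (the deviation of the true local flow from the truncated one, visible in~\eqref{OrigSolution1}, and the Taylor remainder $O(v_2^2)$ of the global map), compose them using $v_1(0)=a_1(r,\phi)v_2+O(v_2^2)$, and then translate via $v_2=he^z$ so that every $O(v_2^k)$ becomes $O(e^{-k|z|})$ and every $O(v_2\ln v_2)$ becomes $O(z e^{-|z|})$. This is exactly the content of the displayed estimates in~\eqref{v2rphiReturn} and the final display of the proof of Theorem~\ref{fullReturnMap}, so your route matches the paper's.

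The one place where you pause---the worry that the explicit $\ln v_2$ factor in $\bar\phi$ might amplify an $O(v_2\ln v_2)$ remainder into $O(v_2(\ln v_2)^2)$, which would \emph{not} be absorbed by $O(ze^{-|z|})$---is a legitimate thing to flag, but the resolution is already visible in~\eqref{trueLocal}/\eqref{return1}: the only quantities multiplied by $\tau\sim\ln v_2$ are $\omega(r(0))/p(r(0))$ and the slope correction $\omega'(r(0))\cdot O(v_1(0))$, and after the substitution $r(0)=b_0(r,\phi)+O(v_2)$ these carry corrections of size $O(v_2)$, not $O(v_2\ln v_2)$. Hence the products stay at $O(v_2\ln v_2)$; no $(\ln v_2)^2$ term ever enters. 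You state this would need to be checked but do not carry it out; since this is the only step where the estimate could conceivably fail, the proof would be cleaner if you cited the explicit form of~\eqref{v2rphiReturn} (where the $\ln$-factor's coefficient is visibly $\omega(b_0)/p(b_0)+O(v_2)$) to close that loop rather than leaving it as a ``to be confirmed'' cancellation. With that one remark made precise, the argument is complete and correct.
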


\begin{rmq}
All of the performed transformations were analytic, so the return map in the $(z,r,\phi)$ coordinates is analytic.
\end{rmq}


\subsection{The limit dynamics of the return map}\label{sec:foliation}
In this section we will prove that in the special case $-\frac{\varsigma(b_0(r,\phi))}{p(b_0(r,\phi))} = \Gamma(r,\phi) = \Gamma \in \mathbb N$, $\Gamma >1$, the dynamics of the return map are in the limit $v_2 \to 0$ the same as the dynamics of its truncated version. This follows immediately from the theorem \ref{foliation}. Before stating it we need to introduce the objects which we will use in this section. 
\newline
Let us consider the following mapping $\varpi$:
\begin{equation}\label{varpi}
\begin{aligned}
\varpi: \ \mathbb R \times \mathbb R \times \mathbb R \times S^1 \ni \left( \begin{array}{c}
v_2 \\
z \\
r \\ 
\phi \\ 
\end{array} \right) 
\mapsto
\left( \begin{array}{c}
h \cdot \big(\frac{a_1(r,\phi) \cdot v_2}{h} \big)^{\frac{-\varsigma(b_0(r,\phi))}{p(b_0(r,\phi))}} + h.o.t.(v_2) \\
\Omega(r,\phi) + \Gamma(r,\phi) \cdot z + O( v_2 \cdot \ln(v_2)) \ (mod \ 1) \\
b_0(r,\phi) + O( v_2  ) \\
c(r,\phi) + z + O( v_2 \cdot \ln(v_2) ) \ \ (mod \ 1 \ )  \\ 
\end{array} \right) \in \mathbb R \times \mathbb R \times \mathbb R \times S^1 
\end{aligned}
\end{equation}
\begin{rmq}
In the definition of the mapping $\varpi$, the variables $v_2$ and $z$ are independent.
\newline
Observe that: $\varpi(0,z,r,\phi) = \big(0, \ loc \circ glob (z,r,\phi) \big)$. 
\newline
On the other hand, $\varpi(v_2,\ln(\frac{v_2}{h}),r,\phi) = \big(h \cdot \big(\frac{a_1(r,\phi) \cdot v_2}{h} \big)^{\frac{-\varsigma(b_0(r,\phi))}{p(b_0(r,\phi))}} + h.o.t.(v_2), Ret_{S_1}(\ln(\frac{v_2}{h}),r,\phi) \big)$. 
\end{rmq}
\begin{df}
We say that a foliation 
$\mathcal{L} = \{h(y) \}$ with $C^m$ leaves  ($m \geq 0$) given by the graphs of functions $x=h(y)$ is given in a domain $W=\{ (x,y) \ | \ y \geq 0, \ x \in S^1 \times [0,1] \times S^1 \}$ if the following conditions are satisfied:
\newline
1. the domain of any function $h(y) \in \mathcal{L}$ is an open connected set in $\mathbb R$ and its graph lies entirely in $W$
\newline
2. for any point $(x^0,y^0) \in W$ there exists a unique function $h(y) \in \mathcal{L}$ such that the domain of $h(y)$ contains $x^0$ and $h(y^0)=x^0$ ( we denote this function by $h(y; x^0,y^0)$
\newline
3. the function $h(y;x^0,y^0)$ is a $C^m$ function of $y$ for fixed $x^0$ and $y^0$
\newline
The graphs of the functions $h(y)$ are called the leaves of $\mathcal{L}$ and are denoted by the same letter $h$.
\end{df}
\begin{df}
A foliation $\mathcal{L}$ is said to be $C^l$-smooth ($l \geq 0$) if $h(y;x^0,y^0)$ is a $C^l$ function of $(y;x^0,y^0)$, i.e., all partial derivatives of order $\leq l$ are continuous and uniformly bounded. 
\end{df}
\begin{df}
A foliation $\mathcal{L}$ of $W$ is said to be $\varpi$- invariant  if 
for any leaf $h \in \mathcal{L}$, $h \neq W_0$, there exists a leaf $\bar{h} \in \mathcal{L}$ such that $\varpi(h) \subseteq \bar{h}$
\end{df}
\begin{df}
An arbitrary vector function $\mu(x^0,y^0)$ on $W$ is called a field of hyperplanes. If $\mu(x^0,y^0) = \frac{\partial h}{\partial y}(y;x^0,y^0)|_{y=y^0}$ for some $h \in \mathcal{L}$, then the field $\mu(x^0,y^0)$ is called a field of tangent hyperplanes to $\mathcal{L}$. 
\end{df}
\begin{thm}\label{foliation}
Let us rewrite 
$$x:=(z,r,\phi), \ \ y:=v_2, \ \ (F,G):=\varpi$$ 
so that:
\begin{equation}\label{ShShFG}
\begin{cases}
\bar{x} =  F(x,y)
\\
\bar{y} = G(x,y)
\end{cases}
\end{equation}
Assume that $|det(\frac{\partial F}{\partial x} (x,y))|$ is bounded from below by a positive constant independent of $x$ and $y$.
\newline
Then there exists a $\varpi$ - invariant $C^1$ foliation $\mathcal{L} = \{h(y)\}$ (satisfying the above definitions) of the space $W=\{ (x,y) \ | \ y \geq 0, \ x \in S^1 \times [0,1] \times S^1 \}$, with $C^2$ leaves given by the graphs of the functions $x=h(y)$ (hence $(z,r,\phi) = h(v_2)$).
\end{thm}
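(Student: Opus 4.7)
The plan is to construct the foliation by the graph-transform method, following the standard Shilnikov-type scheme adapted to this skew-product-looking map. Writing $\varpi=(F,G)$ with $F\colon W\to S^1\times[0,1]\times S^1$ and $G\colon W\to[0,\infty)$, observe that $G(x,0)=0$, so the slice $W_0=\{y=0\}$ is $\varpi$-invariant and $\varpi|_{W_0}$ coincides with the truncated return map; I shall seek the other leaves as $C^2$ graphs $x=h(y)$, using the slope field $\mu(x,y):=h'(y)\in\mathbb R^3$ as the unknown.

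First I would derive the graph-transform equation. If a leaf $h$ is mapped into a leaf $\bar h$, then applying $D\varpi$ to the tangent vector $(\mu,1)$ and renormalising the $y$-component produces the invariance relation
\begin{equation*}
\bar\mu\bigl(\varpi(x,y)\bigr)\;=\;\frac{F_x(x,y)\,\mu(x,y)+F_y(x,y)}{G_x(x,y)\,\mu(x,y)+G_y(x,y)}.
\end{equation*}
The hypothesis $|\det F_x|\ge c_0>0$, combined with $G_y=O(v_2^{\Gamma-1})$ and $G_x=O(v_2^{\Gamma-1})$ (which vanish as $y\to 0$ since $\Gamma>1$), allows one to invert this relation: $F_x-\bar\mu G_x$ is invertible on a neighbourhood of $\{y=0\}$ and
\begin{equation*}
\mu\;=\;\mathcal S[\bar\mu]\;:=\;(F_x-\bar\mu G_x)^{-1}(\bar\mu G_y-F_y).
\end{equation*}
On the complete metric space $\mathcal B$ of bounded continuous slope fields on $W$ with the sup norm, this $\mathcal S$ is the pullback graph transform.

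The second step is to show $\mathcal S$ is a contraction. A direct estimate gives $\|\mathcal S[\bar\mu_1]-\mathcal S[\bar\mu_2]\|\le \|F_x^{-1}\|\cdot\bigl(|G_y|+C|G_x|\bigr)\cdot\|\bar\mu_1-\bar\mu_2\|$, and since $\Gamma>1$ the quantity $\|F_x^{-1}\|\cdot|G_y|$ is uniformly less than $1$ near $\{y=0\}$, yielding a uniform contraction. The Banach fixed-point theorem then produces a unique continuous invariant slope field $\mu^*$. Integrating the ODE $dh/dy=\mu^*(h(y),y)$ through each point $(x^0,y^0)\in W$ defines the leaves, and the fixed-point property translates into $\varpi(h)\subseteq\bar h$ for some leaf $\bar h\in\mathcal L$, giving the required invariance.

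Regularity is the main obstacle. $C^2$-smoothness \emph{along} each leaf follows by running the same contraction on spaces of $C^1$ slope fields restricted to a leaf, the extra derivative costing a further factor of order $\|F_x^{-1}\|$. The transverse $C^1$-regularity of the foliation, i.e.\ $C^1$ dependence of $h(y;x^0,y^0)$ on $(x^0,y^0)$, is obtained via the fibre contraction theorem applied to the bundle whose fibres are the derivatives $D\mu$: both the base map $\mathcal S$ and the induced fibre map contract, the fibre rate being dominated by the base rate thanks to $\Gamma>1$. The technical nuisance here is that the $O(v_2\ln v_2)$ terms in $F$ have $v_2$-derivatives of order $\ln v_2$, which blow up at $y=0$; I would handle this either by working on $\{y\ge\delta\}$ and passing to the limit $\delta\to 0$, or by absorbing the logarithm into a weighted norm in which the contraction estimates still close. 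Uniqueness of the Banach fixed point then yields uniqueness of the invariant foliation.
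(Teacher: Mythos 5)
Your proposal follows essentially the same route as the paper: your invariance relation $\mu=(F_x-\bar\mu G_x)^{-1}(\bar\mu G_y-F_y)$ is exactly the paper's operator $\Gamma_V$ acting on the field of tangent hyperplanes, solved by a contraction argument (the paper, following Shashkov--Shilnikov, uses the weighted metric $\sup\|(\mu_1-\mu_2)/\ln y\|$), with $C^1$ smoothness of the field and $C^2$ leaves obtained by the same fibre-contraction scheme for the derivative and the leaves recovered by integrating $dx/dy=\mu$. Two minor points where the paper does more: the $\ln y$-weighted norm is needed already at the basic fixed-point stage, since the invariant field satisfies $\|\mu_0\|=O(\ln y)$ and so your space of bounded slope fields is not preserved by $\mathcal S$ (your proposed weighted-norm fix is precisely the paper's remedy, not just a regularity issue), and the paper additionally verifies, via the substitution $s=-(\ln y)^{-1}$ and Picard's theorem, that the foliation extends continuously to $\{y=0\}$ with distinct leaves reaching distinct points there.
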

\begin{proof}
We utilize the method described inter alia in \cite{Shashkov}, i.e. we find the field of tangent hyperplanes to $\mathcal{L}$. 
Let us denote the following derivatives
\begin{equation}\label{}
\begin{cases}
A(x,y) :=  \frac{\partial F}{\partial x} (x,y) 
\\
B(x,y) :=  \frac{\partial F}{\partial y} (x,y) 
\\
C(x,y) :=  \frac{\partial G}{\partial x} (x,y) 
\\
D(x,y) :=  \frac{\partial G}{\partial y} (x,y) 
\\
\end{cases}
\end{equation}
All of the entries of $A$ are continuous and bounded as a functions $r$ and $\phi$, moreover by assumption $|det(A)|$ is bounded from below by a positive constant independent of $r,\phi$. Then one can easily find constants $A_3, B_2, C_2, D_2 >0$ such that the following holds for any $x$ and for any $y$ sufficiently small:
\begin{equation}\label{}
\begin{aligned}
&  ||A^{-1}(x,y)|| \leq A_3 
\\ 
&  ||B(x,y)|| \leq B_2 \cdot |\ln y|
\\
&  ||C(x,y)|| \leq C_2 \cdot |y|^{-\frac{\varsigma(b_0(r,\phi))}{p(b_0(r,\phi))}} \cdot |\ln y|
\\
&  ||D(x,y)|| \leq D_2 \cdot |y|^{-\frac{\varsigma(b_0(r,\phi))+p(b_0(r,\phi))}{p(b_0(r,\phi))}}  
\end{aligned}
\end{equation}
Notice that for any invertible square matrix $P$, we have $||P^{-1}||=(\min_{||w||=1} ||P w||)^{-1}$.
\newline
For $\mu$ - the field of hyperplanes to be the field of tangent hyperplanes to an invariant foliation of the form $\{x=h(y)\}$, the following condition has to be satisfied:
\begin{equation}\label{}
\begin{cases}
\frac{d x}{d y} =  \mu(x,y)
\\
\frac{d \bar{x}}{d \bar{y}} =  \mu(\bar{x}, \bar{y})
\end{cases}
\end{equation}
from (\ref{ShShFG}) we obtain after differentiation 
\begin{equation}\label{}
\begin{cases}
d \bar{x} = \frac{\partial F}{\partial x} dx + \frac{\partial F}{\partial y} dy = (A(x,y) \cdot \mu(x,y) + B(x,y)) dy
\\
d \bar{y} = \frac{\partial G}{\partial x} dx + \frac{\partial G}{\partial y} dy = (C(x,y) \cdot \mu(x,y) + D(x,y)) dy
\end{cases}
\end{equation}
which leads us to 
$$ (A(x,y) \cdot \mu(x,y) + B(x,y)) = \mu(\bar{x}, \bar{y}) \cdot (C \cdot \mu(x,y) + D(x,y))$$
and in consequence
$$ \mu(x,y) = (A(x,y) - \mu(\bar{x}, \bar{y}) \cdot C(x,y))^{-1} \cdot (\mu(\bar{x}, \bar{y}) \cdot D(x,y) - B(x,y))$$
Where $(\bar{x},\bar{y}) = (F,G)(x,y)$ and $V$ is the space of all fields of hyperplanes $\mu$ such that
\newline
1) $\mu(x,y)$ is a continuous vector function in $W$ 
\newline
2) $\sup _{\{x \in S^1 \times [0,1] \times S^1, \ y>0 \}} || \frac{\mu(x,y)}{\ln y} ||  \leq E$ for some $E>0$
\newline
3) $\sup _{\{x \in S^1 \times [0,1] \times S^1 \}} || \frac{\mu(x,y)}{\ln y} || =0 $ as $y \rightarrow 0$
\\ \\
Space $V$ equipped with a metric 
$$d(\mu_1,\mu_2) := \sup _{\{x \in S^1 \times [0,1] \times S^1, \ y>0 \}} \big|\big| \frac{\mu_1(x,y)-\mu_2(x,y)}{\ln y} \big|\big|$$
is a complete metric space. Consider the mapping $\Gamma_V : V \rightarrow V$ defined as follows:
\begin{equation}\label{}
\Gamma_V(\mu)(x,y) := (A(x,y) - \mu(\bar{x}, \bar{y}) \cdot C(x,y))^{-1} \cdot (\mu(\bar{x}, \bar{y}) \cdot D(x,y) - B(x,y)) 
\end{equation}
The fixed points of $\Gamma_V$ correspond to the field of tangent hyperplanes to an invariant foliation. Moreover, as shown in \cite{Shashkov} (lemma 2 in \cite{Shashkov}), if $\mu_1, \ \mu_2$ are the fields of tangent hyperplanes corresponding to invariant foliations $\mathcal{L}_1, \ \mathcal{L}_2$, $\mu_1 = \Gamma_V(\mu_2)$ and $h_1 \in \mathcal{L}_1$, then there exist $h_2 \in \mathcal{L}_2$ such that $(F,G)(h_1) \subset h_2$.
Let 
$$M(y):=\frac{||A^{-1}||^{-1} - ||D|| - \sqrt{(||A||^{-1} - ||D||)^2 - 4 \cdot ||B|| \cdot ||C||}}{2 \cdot ||C||}
$$ 
Note that $M(y) < \frac{||A^{-1}||^{-1}}{2 \cdot ||C||}$ and for $|y|$ sufficiently small:
$$M(y)=\frac{2 \cdot ||B||}{||A^{-1}||^{-1} - ||D|| + \sqrt{(||A^{-1}||^{-1} - ||D||)^2 - 4 \cdot ||B|| \cdot ||C||}} \leq 4 \cdot A_3 \cdot B_2 |\ln y|
$$ 
Hence the ball $K_M:= \{ \mu \in V \ | \ d(\mu,0) \leq d(0, M(y)) \}$
is $\Gamma_V$ invariant and is a complete metric space with the metric $d$. The mapping $\Gamma_V|_{K_M}$ is continuous and also is a contraction since:
$$ || \Gamma_V(\mu_1) - \Gamma_V(\mu_2) || = ||(A - \mu_1 \cdot C)^{-1}(\mu_1 \cdot D - B) - (A - \mu_2 \cdot C)^{-1}(\mu_2 \cdot D - B) || = 
$$
$$||(A - \mu_1 \cdot C)^{-1}(\mu_1 \cdot D - B) - (A - \mu_1 \cdot C)^{-1}(\mu_2 \cdot D - B) + (A - \mu_1 \cdot C)^{-1}(\mu_2 \cdot D - B) - (A - \mu_2 \cdot C)^{-1}(\mu_2 \cdot D - B) || = 
$$
$$||(A - \mu_1 \cdot C)^{-1}(\mu_1 -\mu_2) \cdot D + ((A - \mu_1 \cdot C)^{-1}- (A - \mu_2 \cdot C)^{-1})(\mu_2 \cdot D - B) || \leq 
$$
$$||(A - \mu_1 \cdot C)^{-1}|| \cdot ||(\mu_1 -\mu_2)|| \cdot ||D|| + ||(A - \mu_1 \cdot C)^{-1}\cdot(\mu_1 - \mu_2) \cdot C \cdot (A - \mu_2 \cdot C)^{-1} \cdot (\mu_2 \cdot D - B)|| \leq 
$$
$$||(A - \mu_1 \cdot C)^{-1}|| \cdot ||(\mu_1 -\mu_2)|| \cdot ||D|| + ||(A - \mu_1 \cdot C)^{-1}|| \cdot ||\mu_1 - \mu_2|| \cdot ||C|| \cdot ||(A - \mu_2 \cdot C)^{-1}|| \cdot ||(\mu_2 \cdot D - B)|| =
$$
$$||(\mu_1 -\mu_2)|| \cdot \big(||(A - \mu_1 \cdot C)^{-1}||  \cdot ||D|| + ||(A - \mu_1 \cdot C)^{-1}|| \cdot ||C|| \cdot ||(A - \mu_2 \cdot C)^{-1}|| \cdot ||(\mu_2 \cdot D - B)|| \big)
$$
$$||(\mu_1 -\mu_2)|| \cdot \big(||(A - \mu_1 \cdot C)^{-1}||  \cdot ||D|| + ||(A - \mu_1 \cdot C)^{-1}|| \cdot ||C|| \cdot ||(A - \mu_2 \cdot C)^{-1}|| \cdot (||\mu_2|| \cdot ||D|| + ||B||) \big)
$$
We can estimate the coefficient $||(A - \mu_1 \cdot C)^{-1}||$ as follows
$$||(A - \mu_1 \cdot C)^{-1}|| = (\min_{||w||=1} ||A \cdot w- \mu_1 \cdot C \cdot w||)^{-1} \leq 
(\min_{||w||=1} (||A \cdot w||- ||\mu_1 \cdot C \cdot w||))^{-1}
$$
$$\leq (\min_{||w||=1} ||A \cdot w|| + \min_{||w||=1}(- ||\mu_1 \cdot C \cdot w||))^{-1}
 \leq (\min_{||w||=1} ||A \cdot w|| - \max_{||w||=1}(||\mu_1 \cdot C \cdot w||))^{-1}
$$
$$ \leq (||A^{-1}||^{-1} - \max_{||w||=1}(||\mu_1 \cdot C \cdot w||))^{-1}
 \leq (||A^{-1}||^{-1} - ||\mu_1|| \cdot ||C||)^{-1}
$$
$$ \leq (A_3^{-1} - M \cdot C_2 \cdot |y|^{-\frac{\varsigma(b_0(r,\phi))}{p(b_0(r,\phi))}} \cdot |\ln y|)^{-1} 
$$
Hence, we can provide an estimate on the Lipshitz constant of : 
$$||(A - \mu_1 \cdot C)^{-1}||  \cdot ||D|| + ||(A - \mu_1 \cdot C)^{-1}|| \cdot ||C|| \cdot ||(A - \mu_2 \cdot C)^{-1}|| \cdot (||\mu_2|| \cdot ||D|| + ||B||)
$$
$$\leq  \frac{(D_2  |y|^{-\frac{\varsigma(b_0(r,\phi))+p(b_0(r,\phi))}{p(b_0(r,\phi))}} + (A_3^{-1} - M C_2 |y|^{-\frac{\varsigma(b_0(r,\phi))}{p(b_0(r,\phi))}} \cdot |\ln y|)^{-1}  C_2  |y|^{-\frac{\varsigma(b_0(r,\phi))}{p(b_0(r,\phi))}}  |\ln y| \cdot (M  D_2 |y|^{-\frac{\varsigma(b_0(r,\phi))+p(b_0(r,\phi))}{p(b_0(r,\phi))}} + B_2 |\ln y| )}{A_3^{-1} - M \cdot C_2 \cdot |y|^{-\frac{\varsigma(b_0(r,\phi))}{p(b_0(r,\phi))}} \cdot |\ln y|}
$$
so it is smaller than $1$ for $y$ - small enough and all $z,r,\phi$. Hence, $\Gamma_V$ possesses a unique fixed point $\mu_0$ in $K_M$, so  
\begin{equation}\label{mu0}
\mu_0(x,y)=(A(x,y) - \mu_0(\bar{x}, \bar{y}) \cdot C(x,y))^{-1} \cdot (\mu_0(\bar{x}, \bar{y}) \cdot D(x,y) - B(x,y))
\end{equation}
In consequence $||\mu_0|| = O(\ln y)$ and moreover  $||\frac{\partial \mu_0 (x,y)}{\partial (x,y)}|| = O \big(\frac{1}{y} \big)$. 
\newline
Furthermore, formal differentiation of the equation $\mu=\Gamma_V(\bar{\mu})$ with respect to $(x,y)$ gives the formula for 
$$\frac{\partial \mu (x,y)}{\partial (x,y)} = \Big(A(x,y) - \bar{\mu}(\bar{x},\bar{y}) \cdot C(x,y) \Big)^{-1} \cdot \Big(\frac{\partial \bar{\mu}(\bar{x},\bar{y})}{\partial (\bar{x},\bar{y})} \cdot \frac{\partial (F,G)(x,y)}{\partial (x,y)} \cdot D(x,y) + \bar{\mu}(\bar{x},\bar{y}) \cdot \frac{\partial D(x,y)}{\partial (x,y)} - \frac{B(x,y)}{\partial (x,y)} \Big) \ + $$
$$\Big( A(x,y) - \bar{\mu}(\bar{x},\bar{y}) \cdot C(x,y) \Big)^{-1} \cdot \Big(\frac{\partial A(x,y)}{\partial (x,y)} - \frac{\partial \bar{\mu}(\bar{x},\bar{y})}{\partial (\bar{x},\bar{y})} \cdot \frac{\partial (F,G)(x,y)}{\partial (x,y)} \cdot C(x,y) \ - 
$$
$$
- \bar{\mu}(\bar{x},\bar{y}) \cdot \frac{\partial C(x,y)}{\partial (x,y)} \Big) \cdot \Big(A(x,y) - \bar{\mu}(\bar{x},\bar{y}) \cdot C(x,y) \Big)^{-1} \cdot (\bar{\mu}(\bar{x}, \bar{y}) \cdot D(x,y) - B(x,y)) $$ 
So the equation above suggests to consider, for given $\bar{\mu}$, the following mapping $P^{\bar{\mu}} : N \rightarrow N$
$$ P^{\bar{\mu}}(\bar{\eta})(x,y) := \Big(A(x,y) - \bar{\mu}(\bar{x},\bar{y}) \cdot C(x,y) \Big)^{-1} \cdot \Big(\bar{\eta}(\bar{x},\bar{y}) \cdot \frac{\partial (F,G)(x,y)}{\partial (x,y)} \cdot D(x,y) + \bar{\mu}(\bar{x},\bar{y}) \cdot \frac{\partial D(x,y)}{\partial (x,y)} - \frac{B(x,y)}{\partial (x,y)} \Big) \ +$$
$$\Big( A(x,y) - \bar{\mu}(\bar{x},\bar{y}) \cdot C(x,y) \Big)^{-1} \cdot \Big(\frac{\partial A(x,y)}{\partial (x,y)} - \bar{\eta}(\bar{x},\bar{y}) \cdot \frac{\partial (F,G)(x,y)}{\partial (x,y)} \cdot C(x,y) \ -
$$
$$ - \bar{\mu}(\bar{x},\bar{y}) \cdot \frac{\partial C(x,y)}{\partial (x,y)} \Big) \cdot \Big(A(x,y) - \bar{\mu}(\bar{x},\bar{y}) \cdot C(x,y) \Big)^{-1} \cdot (\bar{\mu}(\bar{x}, \bar{y}) \cdot D(x,y) - B(x,y)) $$
where $N$ is a metric space of $1 \times 12$ vector functions $\bar{\eta}$, satisfying
\newline
1) $\eta(x,y)$ is a continuous vector function in $W$ 
\newline
2) $\sup _{\{x \in S^1 \times [0,1] \times S^1, \ y>0 \}} || y \cdot \eta(x,y) ||  \leq J$ for some $J>0$
\newline
3) $\sup _{\{x \in S^1 \times [0,1] \times S^1 \}} || y \cdot \eta(x,y) || =0 $ as $y \rightarrow 0$
\newline
For fixed $\bar{\eta}$ the mapping $P^{\bar{\mu}}$ is pointwise continuous, i.e. $\bar{\mu}_n \rightarrow \bar{\mu}$ implies $P^{\bar{\mu}_n}(\bar{\eta}) \rightarrow P^{\bar{\mu}}(\bar{\eta})$, and also a contraction on $N$, since 
$$||P^{\bar{\mu}}(\bar{\eta}_1) -  P^{\bar{\mu}}(\bar{\eta}_2) || = $$
$$\Big|\Big| (A-\bar{\mu}C)^{-1} \cdot \Big((\bar{\eta}_1 - \bar{\eta}_2) \cdot  \frac{\partial (F,G)(x,y)}{\partial (x,y)} \cdot D \Big) - (A-\bar{\mu}C)^{-1} \cdot \Big((\bar{\eta}_1 - \bar{\eta}_2) \cdot  \frac{\partial (F,G)(x,y)}{\partial (x,y)} \cdot C \Big) \cdot (A-\bar{\mu}C)^{-1} \cdot (\mu(\bar{x}, \bar{y}) \cdot D(x,y) - B(x,y)) \Big|\Big| \leq  $$
$$||\bar{\eta}_1 - \bar{\eta}_2|| \cdot \Bigg(||(A-\bar{\mu}C)^{-1}|| \cdot || \frac{\partial (F,G)(x,y)}{\partial (x,y)} || \cdot ||D|| + ||(A-\bar{\mu}C)^{-1}||^2 \cdot || \frac{\partial (F,G)(x,y)}{\partial (x,y)} || \cdot ||C|| \cdot (||\bar{\mu}|| \cdot ||D|| + ||B||) \Bigg) \leq $$
$$||\bar{\eta}_1 - \bar{\eta}_2|| \cdot \Bigg((A_3^{-1} - M \cdot C_2 \cdot |y|^{-\frac{\varsigma(b_0(r,\phi))}{p(b_0(r,\phi))}} \cdot |\ln y|)^{-1}  \cdot || \frac{\partial (F,G)(x,y)}{\partial (x,y)} || \cdot ||D|| + 
$$
$$+(A_3^{-1} - M \cdot C_2 \cdot |y|^{-\frac{\varsigma(b_0(r,\phi))}{p(b_0(r,\phi))}} \cdot |\ln y|)^{-2}  \cdot || \frac{\partial (F,G)(x,y)}{\partial (x,y)} || \cdot ||C|| \cdot (||\bar{\mu}|| \cdot ||D|| + ||B||) \Bigg) $$
Hence, the Lipschitz constant is smaller than $1$ for $y$ small enough and moreover is independent of $\bar{\mu}$.
\newline
In conclusion, $P^{\bar{\mu}}$ has a unique fixed point for every $\mu$ and this proves as in \cite{Shashkov} that $\mu_0$ is a $C^1$ smooth field of hyperplanes. 
This is due to the following lemmas (Lemma 5, 6, 7 from \cite{Shashkov}):
\begin{lemma}
The space $N$ can be equipped with a norm equivalent to the original norm and such that for all $\mu \in V$, operator $P^{\mu}$ are contraction operators with the same contraction factor $q$.
\end{lemma}

\begin{lemma}
Let $W_1$ and $W_2$ be the metric spaces, and suppose that $W_2$ is complete. Let a map $Q: W_1\rightarrow W_1$ have a unique fixed point $v^*$ to which any sequence of the form $v^{n+1}=Q \cdot v^n$ is convergent. In addition, let a contraction map $\Omega^v: W_2 \rightarrow W_2$ be associated with any element $v \in W_1$, and let the family of maps $\Omega^v$ have the following properties:
\newline
1) there exists a common contraction factor for all $\Omega^v$
\newline
2) the family of maps $\Omega^v$ depends on $v$ continuously, i.e. if $v^n \rightarrow v^*$ as $n\rightarrow +\infty$, then $\Omega^{v^n}[w] \rightarrow \Omega^{v^*}$ as $n\rightarrow +\infty$ for any $w \in W_2$
\newline
Then the map $R:W_1 \times W_2 \rightarrow W_1 \times W_2$ given by the formula $R(v,w):=(Qv,Q^v[w])$ has a unique fixed point $(v^*,w^*)$ and the sequence $(v^{n+1},w^{n+1})= R(v^{n},w^{n})$ is convergent to $(v^*,w^*)$, for an arbitrary initial condition $(v^0,w^0)$.
\end{lemma}
\begin{lemma}
The field $\mu_0$ is a smooth field o hyperplanes tangent to a $C^1$ foliation $\mathcal{L}_0$ with $C^2$ leaves.
\end{lemma}
Moreover, as shown in \cite{Shashkov}, $\mu_0$ corresponds to the $\varpi$ - invariant foliation $\mathcal{L}$ of the $(v_1,z,r,\phi)$ space (lemma 2 in \cite{Shashkov}). 
\newline
The only missing part in our reasoning is to show that in the limit $y=v_2 \rightarrow 0$ different leaves of the foliation correspond to different points $(v_2=0,z,r,\phi)$. In order to prove this, we need to show that the flow governed by the equation
\begin{equation}\label{}
\frac{d x}{d y} =  \mu(x,y), \ \ \ \ \ \text{ with } \ \ y=v_2
\end{equation}
can be continuously extended to the manifold $\{ y=0 \}$. The vector field $\mu(x,y)$ is $C^1$ for $y \neq 0$ and moreover 
\newline
1) $\mu_0(x,y)$ is a continuous vector function in $W$ 
\newline
2) $\sup _{\{x, \ y>0 \}} || \frac{\mu_0(x,y)}{\ln y} ||  \leq E$ for some $E>0$
\newline
3) $\sup _{x} || \frac{\mu_0(x,y)}{\ln y} || =0 $ as $y \rightarrow 0$
\\ \\
Let us perform the substitution 
$$ s := - (\ln y)^{-1}$$
then 
$$\frac{d s}{d y} = y^{-1} \cdot (\ln y)^{-2}$$
which gives 
\begin{equation}\label{}
\frac{d x}{d s} =  \mu(x, y) \cdot y \cdot (\ln y)^2 = - \mu(x,e^{-\frac{1}{s}}) \cdot e^{-\frac{1}{s}} s^{-2}
\end{equation}
now, due to $s \rightarrow 0 $ as $y \rightarrow 0 $ and 
$$||\mu(x,y) \cdot y \cdot (\ln y)^2|| = O(y \cdot (\ln y)^3)$$ 
we can extend smoothly the vector field 
$$\tilde{\mu}(x,s):= \mu(x,e^{-\frac{1}{s}}) \cdot e^{-\frac{1}{s}} s^{-2}$$ 
onto the manifold $\{ y=0 \} = \{ s=0 \}$. For this it suffices to see that the following definitions provide the required $C^1$ smooth extension of $\tilde{\mu}$:
$$\tilde{\mu}(x,0) := \lim _{s \rightarrow 0} \tilde{\mu}(x,s) = 0$$
$$\frac{\partial \tilde{\mu}}{\partial s} (x,0) : = \lim _{s \rightarrow 0} \frac{\partial \tilde{\mu}}{\partial s} (x,s) = 0$$
since $\frac{\partial \mu}{\partial y} = O(y^{-1})$ and 
$$\frac{\partial \tilde{\mu}}{\partial s} (x,s) = \frac{\partial \tilde{\mu}}{\partial y} \cdot \frac{\partial y}{\partial s} = y \cdot (\ln y)^2 (\frac{\partial \mu}{\partial y} \cdot y \cdot \ln y + \mu \cdot \ln y \cdot (2 + \ln y)) = O(y \cdot (\ln y)^5) 
$$
Now the extension of the flow onto the manifold $\{ y=0 \} = \{ s=0 \}$ follows from Picard's Theorem, since $\tilde{\mu}(x,s)$ is locally Lipshitz (in particular for $s$ in the neighbourhood of $0$, i.e. for $s<0$ take $\tilde{\mu}(x,s) = - \tilde{\mu}(x,-s)$ ).
\end{proof}

Although the truncated return map (\ref{truncatedReturnformula}) does not have a form of a Henon-like map in the original coordinates $(z,r,\phi)$, but for suitable choice of parameters it is conjugated to the Henon-like map.
\begin{prop}\label{HenonLike}
There exist coefficients $\Omega(r,\phi)$, $\Gamma$, $b(r,\phi)$, $c(r,\phi)$, such that the mapping
\begin{equation}\label{}
\begin{aligned}
T: \ \ \left( \begin{array}{c}
z \\
r \\ 
\phi \\ 
\end{array} \right) 
\mapsto
\left( \begin{array}{c}
\Omega(r,\phi) + \Gamma \cdot z \ \ (mod \ 1) \\
b(r,\phi) \\
c(r,\phi) + z \ \ (mod \ 1) \\ 
\end{array} \right) 
\end{aligned}
\end{equation}
has a fixed point with a normal form, in a suitable coordinates, given by:
\begin{equation}\label{}
\begin{aligned}
\left( \begin{array}{c}
x \\
y \\ 
w \\ 
\end{array} \right) 
\mapsto
\left( \begin{array}{c}
y \\
w \\
x + y - w + A \cdot y^2 + B \cdot y \cdot w + C \cdot w^2 + h.o.t.
\end{array} \right) 
\end{aligned}
\end{equation}
with $(C - A) \cdot (A - B + C) > 0$.
Hence, by \cite{LorenzLike1}, \cite{LorenzLike2} and \cite{Shimizu} there exists arbitrarily small perturbation of this mapping possessing a Lorenz-like attractor.
\end{prop}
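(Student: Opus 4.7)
The plan is to work at the level of a prospective fixed point $(z^*, r^*, \phi^*)$ of $T$ and to treat the ingredients $\Omega, \Gamma, b, c$ as design parameters whose Taylor coefficients at $(r^*, \phi^*)$ we are free to prescribe. First I would fix $(r^*, \phi^*)$ and solve the fixed-point equations $r^* = b(r^*,\phi^*)$, $z^* = \Omega(r^*,\phi^*)/(1-\Gamma)$, $\phi^* = c(r^*,\phi^*) + z^*$ for the constant values of $b, \Omega, c$ at $(r^*, \phi^*)$. The Jacobian at the fixed point then takes the form
\begin{equation*}
DT = \begin{pmatrix} \Gamma & \Omega_r & \Omega_\phi \\ 0 & b_r & b_\phi \\ 1 & c_r & c_\phi \end{pmatrix},
\end{equation*}
where subscripts denote partial derivatives at $(r^*, \phi^*)$.

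Next I would impose that $DT$ be linearly conjugate to the companion matrix $M$ of the polynomial $\lambda^3 + \lambda^2 - \lambda - 1 = (\lambda-1)(\lambda+1)^2$, whose spectrum is $\{1, -1, -1\}$ with a single Jordan block at $-1$. This amounts to three scalar conditions on the characteristic polynomial of $DT$ (trace, sum of $2\times 2$ principal minors, and determinant equal to $-1$, $-1$, $1$ respectively) together with a generic non-degeneracy condition guaranteeing that $DT$ has a cyclic vector (equivalently, is not diagonalisable on the $(-1)$-eigenspace). One concrete realisation: set $b_r = b_\phi = 0$, choose $\Gamma$ and $c_\phi$ with $\Gamma + c_\phi = -1$, and fine-tune $\Omega_r, \Omega_\phi, c_r$ so that the remaining two coefficients of the characteristic polynomial take the required values; the cyclicity condition is then open and easily verified. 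Performing the linear change of variables that maps $DT$ to $M$ puts the linear part of the map into the form $(x, y, w) \mapsto (y, w, x + y - w)$ claimed in the proposition.

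The last step is to analyse the quadratic part in the new coordinates. Since the target form has a double eigenvalue $-1$, standard normal-form theory for the resonance $\{1, -1, -1\}$ shows that by a near-identity quadratic change of variables (which does not alter the linear part) all quadratic monomials in the first two components, as well as the monomials $x^2, xy, xw$ in the third, may be eliminated, leaving only the resonant triple $Ay^2 + Byw + Cw^2$ in the last component. The coefficients $A, B, C$ are explicit linear combinations of the second partial derivatives of $\Omega, b, c$ at $(r^*, \phi^*)$ with weights determined by the conjugating matrix; since these nine second-order Taylor coefficients are free (they are not constrained by the fixed-point or spectral conditions, which involve only values and first derivatives), the map from Taylor data to $(A, B, C) \in \mathbb R^3$ is surjective onto an open set. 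The inequality $(C-A)(A-B+C) > 0$ is then an open non-empty condition that is trivially satisfied by a generic choice, and the conclusion follows by invoking the results of \cite{LorenzLike1}, \cite{LorenzLike2}, \cite{Shimizu}. The main technical point is keeping track of the conjugating matrix and its inverse carefully enough to exhibit $(A, B, C)$ as a surjection onto a neighbourhood of a point satisfying the desired inequality; once that linear-algebraic bookkeeping is done the rest is routine.
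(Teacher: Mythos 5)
Your strategy (arrange a fixed point with multipliers $\{1,-1,-1\}$ and a Jordan block, then normalise the quadratic part) sounds like the paper's, but the central step does not work as you state it. With all three multipliers on the unit circle you are in a fully resonant situation: the homological operator $L_M h = M h - h\circ M$ on the $18$-dimensional space of quadratic maps $\mathbb R^3\to\mathbb R^3$ has, for the semisimple part $\mathrm{diag}(1,-1,-1)$, an $8$-dimensional kernel (in eigencoordinates $u_1,u_2,u_3$ the monomials $u_1^2,u_2^2,u_2u_3,u_3^2$ in the first component and $u_1u_2,u_1u_3$ in each of the other two are resonant), and for the full non-semisimple $M$ a direct computation still gives a $4$-dimensional kernel, hence a $4$-dimensional cokernel. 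The image of $L_M$ is therefore $14$-dimensional, and a $3$-dimensional space such as $\{(0,0,Ay^2+Byw+Cw^2)\}$ can never complement it: a generic map with this linear part retains at least four independent quadratic obstructions (for instance $u_1^2$ and $u_3^2$ in the $+1$-direction cannot be conjugated away). So "standard normal-form theory" does not reduce the quadratic part to the claimed three terms; reaching that form is an extra, positive-codimension condition on the second-order Taylor data which must be \emph{enforced} through the choice of $\Omega,b,c$. That is exactly what the paper does and what your argument omits: it uses the skew structure of $T$ via the nonlinear change $x=\phi$, $y=z+c(r)$, $w=\Omega(r,\phi)+\Gamma z+c(b(r,\phi))$, which makes the first two components \emph{exactly} $y$ and $w$ (your purely linear conjugation would also leave higher-order terms there, so you would not get the stated form), and then solves explicit algebraic equations for $a_1,a_2,c_1,b_3,b_4,b_5$ so that $(T_3)'_x=1$, $(T_3)'_y=1$, $(T_3)'_w=-1$ and $(T_3)''_{xx}=(T_3)''_{xy}=(T_3)''_{xw}=0$, finally checking $(C-A)(A-B+C)>0$ for a concrete choice ($a_3=1$, $b_1=-2$, $b_2=-1$, $\Gamma=5$).

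Two further concrete problems. Your proposed realisation "set $b_r=b_\phi=0$" is inconsistent with the spectral requirement: $\det DT=\Gamma(b_r c_\phi-b_\phi c_r)+\Omega_r b_\phi-\Omega_\phi b_r$, which vanishes when $b_r=b_\phi=0$, so $DT$ then has a zero eigenvalue and cannot be conjugate to the companion matrix of $(\lambda-1)(\lambda+1)^2$ (whose determinant is $1$). And the final assertion that the second-order Taylor coefficients map surjectively onto an open set of $(A,B,C)$ is stated, not proved; once the missing elimination constraints above are imposed, those coefficients are no longer free, and verifying that the open inequality can still be achieved is precisely the nontrivial computation carried out in the paper. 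You also never address the $(\mathrm{mod}\ 1)$ structure: the paper needs $\Gamma\in\mathbb N$ so that $z\mapsto\Omega+\Gamma z$ is well defined with $z$ taken mod $1$ and the fixed-point analysis makes sense; your choice of $\Gamma$ through trace conditions has to be made compatible with this.
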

\begin{proof}
Let us choose $\Omega(r,\phi)$, $\Gamma(r,\phi)$, $b(r,\phi)$, $c(r,\phi)$ of the special form:
$$\Omega(r,\phi) = a_1 r + a_2 \phi \cdot (1-\phi) + a_3 r^2 
$$
$$
\Gamma = constant \in \mathbb N$$
$$b(r,\phi) = b_1 r + b_2 \phi \cdot (1-\phi) + b_3 r^2 + b_4 r \phi \cdot (1-\phi) + b_5 (\phi \cdot (1-\phi))^2$$
$$c(r,\phi) = c(r) = c_1 r$$
Under the above assumptions, the map $T$ possesses a fixed point $(0,0,0)$. Moreover, this allows us to treat $z$ variable $(mod \ 1)$. We perform the change of the coordinates as follows:
\begin{equation}\label{}
\begin{cases}
x := \phi
\\
y : = z + c(r)
\\
w := \Omega(r,\phi) + \Gamma \cdot z + c \big(b(r,\phi) \big)
\end{cases}
\end{equation}
In the coordinates $(x,y,w)$ the map $T$ reads as:
\begin{equation}\label{}
\begin{aligned}
T: \ \ \left( \begin{array}{c}
x \\
y \\ 
w \\ 
\end{array} \right) 
\mapsto
\left( \begin{array}{c}
y \\
w \\
\Omega \big(b(r,\phi), c(r) +z \big) + \Gamma \cdot z + c \big( b(b(r,\phi), c(r) +z ) \big) \\ 
\end{array} \right) 
\end{aligned}
\end{equation}
Utilizing the chain rule, we find the expansion of $\bar{w}$ in terms of $(x,y,w)$ around fixed point $(0,0,0)$:
$$\bar{w} = \Omega \big( b(r,\phi), c(r) +z \big) + \Gamma \cdot z + c \big(b(b(r,\phi), c(r) +z ) \big) = 
T_3(x,y,w)= $$
$$= T_3(0,0,0) + \big(T_3(0,0,0)\big)_{x}' \cdot x + \big(T_3(0,0,0)\big)_{y}' \cdot y + \big(T_3(0,0,0)\big)_{w}' \cdot w + \big(T_3(0,0,0)\big)_{xx}'' \cdot \frac{x^2}{2} + \big(T_3(0,0,0)\big)_{yy}'' \cdot \frac{y^2}{2}+ $$
$$+ \big(T_3(0,0,0)\big)_{ww}'' \cdot \frac{w^2}{2}  + \big(T_3(0,0,0)\big)_{xy}'' \cdot x \cdot y + \big(T_3(0,0,0)\big)_{xw}'' \cdot x \cdot w + \big(T_3(0,0,0)\big)_{yw}'' \cdot y \cdot w + h.o.t. $$
We have $T_3(0,0,0) = 0$ and moreover the following conditions are satisfied:
$$\big(T_3(0,0,0)\big)_{x}' = 1, \ \ \big(T_3(0,0,0)\big)_{y}' = 1, \ \ \big(T_3(0,0,0)\big)_{w}' = -1$$
$$\big(T_3(0,0,0)\big)_{xx}'' = 0, \ \ \big(T_3(0,0,0)\big)_{xy}'' = 0, \ \ \big(T_3(0,0,0)\big)_{xw}'' = 0$$ 
provided that 
$$a_2 = -b_1 -2 \Gamma$$
$$c_1 = \frac{(1-a_2)(1+b_1)}{b_2 (1+b_1+2 \Gamma)}$$
$$a_1 = c_1 \frac{2 \Gamma -b_1 - b_1^2}{1+b_1}$$
$$b_5 = -\Big( b_2 (a_3 b_2^2 (1 + b_1^2 (-1 + \Gamma)^2 - 4 \Gamma + 2 b_1 (-1 + \Gamma) \Gamma +  5 \Gamma^2) - (-1 + b_1) \Gamma ((-1 + a_5 + b_1 - a_5 b_1) \Gamma + 
$$
$$
b_4 (-1 + 2 \Gamma) (1 + b_1 + 2 \Gamma)) +  b_2^2 c_3 (b_1^4 (-1 + \Gamma)^2 + 2 b_1 (-1 + \Gamma) \Gamma +        2 b_1^3 (-1 + \Gamma) \Gamma + \Gamma (-2 + 9 \Gamma - 8 \Gamma^2) + 
$$
$$      
			b_1^2 (1 - 2 \Gamma + 2 \Gamma^2)) + 
    b_2 (a_4 \Gamma (-1 + b_1 + 2 \Gamma - 2 b_1 \Gamma) + 
       b_3 (1 + b_1^3 (-1 + \Gamma)^2 - 2 \Gamma - 3 \Gamma^2 + 8 \Gamma^3 + 
$$
$$
          b_1^2 (1 - 4 \Gamma + 3 \Gamma^2) + b_1 (1 - 6 \Gamma + 7 \Gamma^2)))) \Big) \Big{/} \Big( (-1 + b_1)^2 \Gamma^2 (1 + b_1 + 2 \Gamma) \Big)$$
$$b_3 = \Big(-b_1 b_4 + b1^3 b_4 + 2 b_1^3 b_2^2 c_3 + 2 b_1^4 b_2^2 c_3 + b_1 b_4 \Gamma - 
    b_1^3 b_4 \Gamma - 4 b_2^2 c_3 \Gamma - 2 b_1^2 b_2^2 c_3 \Gamma - 
    4 b_1^3 b_2^2 c_3 \Gamma - 2 b_1^4 b_2^2 c_3 \Gamma - 2 b_4 \Gamma^2 + 
$$
$$		
    2 b_1 b_4 \Gamma^2 + 8 b_2^2 c_3 \Gamma^2 + a_4 b_1 b_2 (-1 + b_1 + \Gamma - b_1 \Gamma) -
    2 a_3 b_2^2 (-1 + b_1^2 (-1 + \Gamma) + 2 \Gamma + b1 \Gamma)\Big) \Big{/} \Big(2 b_2 (-1 + 
$$
$$
      b_1^3 (-1 + \Gamma) + 4 \Gamma^2 + b_1^2 (-1 + 2 \Gamma) + b_1 (-1 + 3 \Gamma)) \Big)$$
$$b_4 = \Big(-b_2 (a_4 (-1 + b_1) (\Gamma + b_1^4 (-1 + \Gamma)^2 \Gamma + 
            b_1^5 (-1 + \Gamma)^2 \Gamma - 6 \Gamma^3 + 8 \Gamma^5 + 
            b_1^3 (-1 + 4 \Gamma - 3 \Gamma^2 - 3 \Gamma^3 + 2 \Gamma^4) + 
$$
$$
            b_1 (-1 + 5 \Gamma - 5 \Gamma^2 - 6 \Gamma^3 + 6 \Gamma^4) + 
            b_1^2 (-2 + 8 \Gamma - 4 \Gamma^2 - 15 \Gamma^3 + 12 \Gamma^4)) + 
         2 b_2 (b_1^7 c_3 (-1 + \Gamma)^2 \Gamma - 
$$
$$
            \Gamma (1 + \Gamma) (-1 + 2 \Gamma)^3 (2 a_3 + c_3 - 2 c_3 \Gamma) + 
            b_1^2 c_3 (1 - 2 \Gamma - 2 \Gamma^2 + 5 \Gamma^3) + 
            b_1^5 c_3 (-1 + 3 \Gamma - \Gamma^2 - 4 \Gamma^3 + 2 \Gamma^4) + 
$$
$$
            b_1^4 c_3 (-1 + 3 \Gamma + \Gamma^2 - 11 \Gamma^3 + 8 \Gamma^4) + 
            b_1^3 c_3 (1 - 3 \Gamma + 6 \Gamma^3 - 6 \Gamma^4 + 4 \Gamma^5) + 
            b_1 \Gamma (a_3 (2 - 6 \Gamma + 8 \Gamma^3) + 
$$
$$
               c_3 (1 - 9 \Gamma + 15 \Gamma^2 + 8 \Gamma^3 - 20 \Gamma^4))))\Big) \Big{/} \Big((-1 + 
         b_1) (1 + b_1) (\Gamma + b_1^4 (-1 + \Gamma)^2 \Gamma + 
         b_1^5 (-1 + \Gamma)^2 \Gamma - 2 \Gamma^2 + 
$$
$$
         b_1^3 (-1 + 4 \Gamma - 3 \Gamma^2 - 3 \Gamma^3 + 2 \Gamma^4) + 
         b_1 (-1 + 5 \Gamma - 7 \Gamma^2 - 4 \Gamma^3 + 10 \Gamma^4) + 
         b_1^2 (-2 + 8 \Gamma - 4 \Gamma^2 - 15 \Gamma^3 + 12 \Gamma^4)) \Big)$$			
If we choose for example:
$$a_3 = 1, \ \ b_1 = -2, \ \ b_2 = -1, \ \ \Gamma = 5$$
then
$$\big(T_3(0,0,0) \big)_{yw}'' = B,  \ \ \big(T_3(0,0,0) \big)_{yy}'' = 2 \cdot A, \ \ \big(T_3(0,0,0) \big)_{ww}'' = 2 \cdot C $$
satisfy
$$(C-A) \cdot (A-B+C) >0$$
\end{proof}

\subsection{Further analysis of the return map}\label{partialHyp}
In the latter proposition we assumed that $\Gamma \in \mathbb N$ in order to compute normal form of the map $T$ and to treat $z$ variable $(mod \ 1)$. The proposition \ref{coneField} below, provides conditions under which the truncated return map (\ref{truncatedReturnformula}) is partially hyperbolic.
\begin{prop}\label{coneField}
For the coefficients $\Omega(r,\phi)$, $\Gamma(r,\phi)$, $b(r,\phi)$, $c(r,\phi)$, such that $\Omega_r '(r,\phi)$, $\Omega_{\phi} '(r,\phi)$, $\Gamma_r '(r,\phi)$, $\Gamma_{\phi} ' (r,\phi)$, $b_r '(r,\phi)$, $b_{\phi} '(r,\phi)$, $c_r ' (r,\phi)$, $c_{\phi} ' (r,\phi)$ are small enough (uniformly in $(r,\phi)$), the mapping $T$ possesses an invariant cone-field 
$$\mathcal{C}_{L, (z,r,\phi)}:= \Big{\{} v= v_1 + v_{23} \in T_{(z,r,\phi)}M \ | \ v_1=(v_{11},0,0), \ v_{23}=(0,v_2,v_3) \ \text{such that } ||v_{23}|| < L\cdot |v_1| \Big{\}}$$ 
where $M := \mathbb R \times [0,1] \times S^1$, $L<1$ and $|| \cdot ||$ denotes the Euclidean norm. 
\end{prop}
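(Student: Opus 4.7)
The plan is to verify cone invariance directly from the Jacobian of $T$, by splitting it into an ``unperturbed'' block that already preserves the cone and a small correction that can be absorbed by a continuity estimate.

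First, I would compute
\[
DT_{(z,r,\phi)} =
\begin{pmatrix}
\Gamma(r,\phi) & \Omega_r'+\Gamma_r'\,z & \Omega_\phi'+\Gamma_\phi'\,z \\
0 & b_r' & b_\phi' \\
1 & c_r' & c_\phi'
\end{pmatrix},
\]
where the primes denote partial derivatives with respect to $r$ or $\phi$ as indicated. Writing a cone vector as $v=v_1+v_{23}$ with $v_1=(v_{11},0,0)$, $v_{23}=(0,v_2,v_3)$, and setting $DT\cdot v = \bar v_1 + \bar v_{23}$, I get
\begin{align*}
\bar v_{11} &= \Gamma\,v_{11}+(\Omega_r'+\Gamma_r'\,z)\,v_2+(\Omega_\phi'+\Gamma_\phi'\,z)\,v_3, \\
\bar v_{23} &= \bigl(b_r'\,v_2+b_\phi'\,v_3,\;v_{11}+c_r'\,v_2+c_\phi'\,v_3\bigr).
\end{align*}

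Next, I would analyse the unperturbed case in which every listed partial derivative vanishes. Then $\bar v_{11}=\Gamma\,v_{11}$ and $\bar v_{23}=(0,v_{11})$, so $\|\bar v_{23}\|/|\bar v_{11}|=1/\Gamma$. Because the standing ODE assumption $p+\varsigma<0$ forces $\Gamma=-\varsigma/p>1$ on the compact set $[0,1]\times[0,1)$, there is a uniform lower bound $\Gamma\geq\Gamma_{\min}>1$. Choosing any $L\in(1/\Gamma_{\min},1)$ then yields cone invariance for the unperturbed block, with definite expansion in the $v_1$-direction.

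For the full Jacobian I would apply the triangle inequality and Cauchy--Schwarz to obtain
\[
|\bar v_{11}|\;\geq\;\Gamma_{\min}\,|v_{11}|-\epsilon_1(z)\,\|v_{23}\|,\qquad
\|\bar v_{23}\|\;\leq\;|v_{11}|+\epsilon_2\,\|v_{23}\|,
\]
where $\epsilon_1(z)=\sqrt{(\Omega_r'+\Gamma_r'z)^2+(\Omega_\phi'+\Gamma_\phi'z)^2}$ and $\epsilon_2$ is the operator norm of the block $\bigl(\begin{smallmatrix}b_r'&b_\phi'\\ c_r'&c_\phi'\end{smallmatrix}\bigr)$. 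Using $\|v_{23}\|\leq L|v_{11}|$ gives
\[
\frac{\|\bar v_{23}\|}{|\bar v_{11}|}\;\leq\;\frac{1+\epsilon_2\,L}{\Gamma_{\min}-\epsilon_1\,L},
\]
which is strictly smaller than $L$ as soon as $\tfrac{1}{L}+\epsilon_1 L+\epsilon_2<\Gamma_{\min}$; since $L>1/\Gamma_{\min}$, this is precisely the smallness hypothesis on the partials of $\Omega,\Gamma,b,c$. Combining cone invariance with the pointwise lower bound $|\bar v_{11}|\geq(\Gamma_{\min}-\epsilon_1 L)\,|v_{11}|>|v_{11}|$ inside the cone, a graph-transform construction analogous to Theorem~\ref{foliation} produces the one-dimensional invariant foliation whose leaves have the form $(r,\phi)=h(z)$ needed for the subsequent reduction to a two-dimensional map.

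The main obstacle is uniformity in the unbounded coordinate $z$: the coefficient $\epsilon_1(z)$ grows linearly in $|z|$ unless $\Gamma_r'\equiv\Gamma_\phi'\equiv 0$. To reach the claim on all of $M=\mathbb R\times[0,1]\times S^1$, the hypothesis ``small enough'' must therefore be read as in particular forcing $\Gamma$ to be (essentially) constant on the relevant invariant region, so that the $z$-linear term drops out of $\epsilon_1$. The remaining partials $\Omega_r',\Omega_\phi',b_r',b_\phi',c_r',c_\phi'$ then only need to be pointwise small relative to $\Gamma_{\min}-1/L$, and the estimate above becomes uniform.
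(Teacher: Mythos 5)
Your proof is correct and takes essentially the same route as the paper: a direct cone--invariance check on the Jacobian of $T$, bounding the off--diagonal contributions via Cauchy--Schwarz/triangle inequalities and arriving at a condition of the form $\tfrac{1}{L}+\epsilon_1 L+\epsilon_2<\Gamma$, which is satisfiable with $L\in(1/\Gamma,1)$ once the listed partial derivatives are uniformly small and $\Gamma>1$. Your worry about the $z$--linear terms $\Gamma_r'\,z$, $\Gamma_\phi'\,z$ is resolved in the paper exactly as you propose: its proof begins by setting $\Gamma(r,\phi)=\Gamma\in\mathbb N$, $\Gamma>1$, constant, so these terms vanish and the estimate is uniform in $z$.
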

\begin{proof}
Let $\Gamma(r,\phi) = \Gamma \in \mathbb N$, $\Gamma >1$. Hence at point $(z,r,\phi) \in M$ the derivative of $T$ is given by 
\begin{equation}\label{}
\begin{aligned}
D_{(z,r,\phi)} T = \left( \begin{array}{ccc}
\Gamma & \Omega_r ' (r,\phi) & \Omega_{\phi} ' (r,\phi) \\
0 & b_r ' (r,\phi) & b_{\phi} ' (r,\phi) \\
1 & c_r ' (r,\phi) & c_{\phi} ' (r,\phi) \\
\end{array} \right)
\end{aligned}
\end{equation}
and so, for the $v=(v_1,v_2,v_3) \in \mathcal{C}_{L,(z,r,\phi)}$, we have 
\newline
$$D_{(z,r,\phi)} T \cdot v= ( \ \Gamma \cdot v_1 + \Omega_r ' (r,\phi) \cdot v_2 + \Omega_{\phi} ' (r,\phi) \cdot v_3, \  b_r ' (r,\phi) \cdot v_2 + b_{\phi} ' (r,\phi) \cdot v_3, \ v_1 + c_r ' (r,\phi) \cdot v_2 + c_{\phi} ' (r,\phi) \cdot v_3 \ )$$
Now the invariance condition for the cone-field $\mathcal{C}_{L, (z,r,\phi)}$ means that:
$$||( \ b_r ' (r,\phi) \cdot v_2 + b_{\phi} ' (r,\phi) \cdot v_3, \ v_1 + c_r ' (r,\phi) \cdot v_2 + c_{\phi} ' (r,\phi) \cdot v_3 \ ) || < L \cdot | \ \Gamma \cdot v_1 + \Omega_r ' (r,\phi) \cdot v_2 + \Omega_{\phi} ' (r,\phi) \cdot v_3 \ |$$
From the Cauchy-Schwarz inequality we have:
$$|\ \Omega_r ' (r,\phi) \cdot v_2 + \Omega_{\phi} ' (r,\phi) \cdot v_3 \ | \leq ||(v_2,v_3)|| \cdot \sqrt{(\Omega_r ' (r,\phi))^2 + (\Omega_{\phi} ' (r,\phi))^2} < L \cdot |v_1| \cdot \sqrt{(\Omega_r ' (r,\phi))^2 + (\Omega_{\phi} ' (r,\phi))^2} $$
choose 
$$c_1 \in \big(0, 1 \big), \ \ \ L \in \Bigg(0, \frac{c_1 \cdot \Gamma}{\sqrt{(\Omega_r ' (r,\phi))^2 + (\Omega_{\phi} ' (r,\phi))^2}} \ \Bigg)$$
then
$$|\ \Omega_r ' (r,\phi) \cdot v_2 + \Omega_{\phi} ' (r,\phi) \cdot v_3 \ | < L \cdot |v_1| \cdot \sqrt{(\Omega_r ' (r,\phi))^2 + (\Omega_{\phi} ' (r,\phi))^2} < c_1 \cdot \Gamma \cdot |v_1| $$
which leads to
$$(1-c_1) \cdot \Gamma \cdot |v_1| < | \ \Gamma \cdot v_1 + \Omega_r ' (r,\phi) \cdot v_2 + \Omega_{\phi} ' (r,\phi) \cdot v_3 \ | $$
On the other hand:
\newline
$$\Big|\Big|\Big( \ b_r ' (r,\phi) \cdot v_2 + b_{\phi} ' (r,\phi) \cdot v_3, \ v_1 + c_r ' (r,\phi) \cdot v_2 + c_{\phi} ' (r,\phi) \cdot v_3 \ \Big) \Big|\Big|^2 = $$
$$ = v_1^2 + \Big(\ b_r ' (r,\phi) \cdot v_2 + b_{\phi} ' (r,\phi) \cdot v_3 \Big)^2 + \Big( c_r ' (r,\phi) \cdot v_2 + c_{\phi} ' (r,\phi) \cdot v_3 \Big)^2 + 2\cdot v_1 \cdot \Big(c_r ' (r,\phi) \cdot v_2 + c_{\phi} ' (r,\phi) \cdot v_3 \Big) $$
$$ \leq v_1^2 + \Big((\ b_r ' (r,\phi))^2 + (b_{\phi} ' (r,\phi))^2 + (\ c_r ' (r,\phi))^2 + (c_{\phi} ' (r,\phi))^2 \Big) \cdot (v_2^2 + v_3^2) + 2\cdot v_1 \cdot  \sqrt{(c_r ' (r,\phi))^2 + (c_{\phi} ' (r,\phi))^2} \cdot \sqrt{v_2^2 + v_3^2}  $$
$$ < v_1^2 \cdot \Bigg( 1+\ \Big((\ b_r ' (r,\phi))^2 + (b_{\phi} ' (r,\phi))^2 + (\ c_r ' (r,\phi))^2 + (c_{\phi} ' (r,\phi))^2 \Big) \cdot L^2 + 2 \cdot L \cdot \sqrt{(c_r ' (r,\phi))^2 + (c_{\phi} ' (r,\phi))^2} \Bigg)$$
\newline
we want the right hand side of the last inequality to be smaller than 
$$L^2 \cdot (1-c_1)^2 \cdot \Gamma^2 \cdot v_1^2$$
which in consequence means:
$$1+\ \Big((\ b_r ' (r,\phi))^2 + (b_{\phi} ' (r,\phi))^2 + (\ c_r ' (r,\phi))^2 + (c_{\phi} ' (r,\phi))^2 \Big) \cdot L^2 + 2 \cdot L \cdot \sqrt{(c_r ' (r,\phi))^2 + (c_{\phi} ' (r,\phi))^2} <$$
$$<  L^2 \cdot (1-c_1)^2 \cdot \Gamma^2$$
Since $\Gamma>1$, the last inequality has a positive solution in 
$$c_1 \in \big(0, 1 \big), \ \ \ L \in \Bigg(0, \frac{c_1 \cdot \Gamma}{\sqrt{(\Omega_r ' (r,\phi))^2 + (\Omega_{\phi} ' (r,\phi))^2}} \ \Bigg)$$
for
$$\Omega_r ' (r,\phi), \ \Omega_{\phi} ' (r,\phi) , \  b_r ' (r,\phi), \ b_{\phi} ' (r,\phi), \ c_r ' (r,\phi), \ c_{\phi} ' (r,\phi)$$ 
being sufficiently small (uniformly in $(r,\phi)$).
\end{proof}

\begin{cor}
If $T$ - the truncated return map (\ref{truncatedReturnformula}) possesses an invariant cone-field, as in proposition \ref{coneField}, then there exists a $T$ - invariant foliation $\mathcal{L}= \{ h(z) \}$ of the phase space $(z,r,\phi)$ with the leaves given by the graphs of the functions $(r,\phi)=h(z)$ 
\end{cor}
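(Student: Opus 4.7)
The claim is that the expanding cone field produced by Proposition \ref{coneField} gives rise to a strong unstable foliation whose $1$-dimensional leaves are graphs $(r,\phi)=h(z)$. I would prove this by the classical Hadamard graph-transform argument, adapted to the present partially hyperbolic setting.

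Call a $C^1$ curve \emph{admissible} if it is the graph of a function $h:J\to \mathbb R\times S^1$, $z\mapsto(r(z),\phi(z))$, on an open interval $J\subset\mathbb R$ with $\|h'(z)\|<L$ everywhere; equivalently, its tangent vector $(1,h'(z))$ lies in $\mathcal C_{L,(z,h(z))}$. The first step is to show that admissibility is preserved by $T$. Invariance of the cone ($DT(\mathcal C_L)\subset\mathcal C_L$) from Proposition \ref{coneField} guarantees that tangent vectors of $T(\mathrm{graph}\,h)$ still lie in $\mathcal C_L$, while the lower bound
\[
\bigl|\Gamma v_1+\Omega'_r v_2+\Omega'_\phi v_3\bigr|>(1-c_1)\Gamma\,|v_1|,
\]
derived in the proof of Proposition \ref{coneField}, together with $\Gamma>1$ and the freedom to take $c_1$ arbitrarily close to $0$, yields uniform expansion in the $z$-direction along admissible curves. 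Consequently the $z$-projection of $T\circ(\mathrm{id},h)$ is a diffeomorphism onto an interval stretched by factor at least $(1-c_1)\Gamma$, and $T(\mathrm{graph}\,h)=\mathrm{graph}(\mathcal T h)$ for a unique admissible $\mathcal T h$. This defines the graph transform $\mathcal T$.

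The second step is to produce a leaf through a prescribed point $p_0\in M$ by iterating the transform. For each $n\ge 0$ pick a short admissible germ $g_n$ through $T^{-n}(p_0)$ (the constant germ $(r,\phi)\equiv$ const. will do) and set $h_n:=\mathcal T^n g_n$. The expansion in Step~1 stretches the $z$-domains geometrically, while the uniform slope bound $L$ is preserved, so Arzel\`a--Ascoli extracts a subsequential limit $h(\,\cdot\,;p_0)$, an admissible graph passing through $p_0$. Uniqueness of the limit follows by a standard contraction estimate: two admissible graphs through $p_0$ agree after pulling back by $T^{n}$ up to an error of order $\Gamma^{-n}$, so they must coincide. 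The family $\{h(\,\cdot\,;p_0)\}_{p_0\in M}$ is $T$-invariant by construction (Step~1) and, because the cone $\mathcal C_L$ and the expansion rate are uniform in $(z,r,\phi)$, the leaves through distinct points either coincide or are disjoint, giving the foliation property in the sense of Section \ref{sec:foliation}.

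The main technical obstacle is the boundary behaviour of the phase cylinder $M=\mathbb R\times[0,1]\times S^1$: an admissible curve may leave the strip $r\in[0,1]$ after finitely many iterations of $T$, which would prevent defining $\mathcal T h$ everywhere. I would handle this by permitting leaves with non-maximal, possibly truncated, open $z$-domains, as the foliation definition of Section \ref{sec:foliation} already allows; each leaf is then defined on the maximal $z$-interval on which it and all its iterates remain inside $M$. Smoothness of the leaves (beyond the continuity of $h(\,\cdot\,;p_0)$ in $p_0$) is not asserted in the corollary, so the more delicate $C^1/C^2$ estimates required in Theorem \ref{foliation} do not need to be reproved here.
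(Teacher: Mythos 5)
The paper itself supplies no proof for this corollary; it is offered as a direct consequence of Proposition~\ref{coneField}, implicitly leaning on standard partially hyperbolic theory (the bibliography cites Hirsch--Pugh--Shub). Your Hadamard graph-transform reconstruction is a sound and classical way to obtain the foliation from the invariant expanding cone field, and it is worth noting that it differs in method from the Shashkov-style field-of-tangent-hyperplanes contraction that the paper uses in Theorem~\ref{foliation}: you contract on the space of admissible graphs directly, whereas Shashkov's method contracts on the space of candidate tangent-plane fields and then integrates. Both deliver the same foliation; the graph transform is perhaps more elementary, while Shashkov's method is what the paper sets up machinery for and makes the $C^1$-smoothness of the holonomy more transparent.

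There is, however, one genuine gap you should close. Your construction of the leaf through a prescribed point $p_0$ iterates the graph transform on germs placed at $T^{-n}(p_0)$, which tacitly assumes these preimages exist and are unambiguous. Nothing in Proposition~\ref{coneField} forces $T$ to be a diffeomorphism of $M = \mathbb R \times [0,1]\times S^1$: the determinant $\det DT = \Gamma\,(b_r'c_\phi'-b_\phi'c_r') + \Omega_r'b_\phi' - \Omega_\phi'b_r'$ vanishes, for instance, when $b$ is constant, a case perfectly compatible with the hypothesis that all the listed partial derivatives are uniformly small. So either you must add invertibility of $T$ as a hypothesis (which is harmless and matches the HPS framework the paper is presumably invoking), or you must run the graph transform forward only, using the domination supplied by the cone field to show that for any two admissible curves $g_1, g_2$ through the same point $p$ one has $d_{C^0}(\mathcal T^n g_1, \mathcal T^n g_2)\to 0$, so that the leaf through $q$ is well defined whenever $q$ lies in the forward orbit $T^n(M)$, and then argue that this forward-invariant family extends to a foliation of all of $M$. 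As written, the sketch silently uses invertibility and should say so explicitly or adopt one of these fixes. The remaining steps (cone invariance $\Rightarrow$ admissibility preserved, expansion rate $\geq(1-c_1)\Gamma>1$ from the lower bound in the proof of Proposition~\ref{coneField}, Arzel\`a--Ascoli for existence, and the graph-transform contraction with rate comparable to $\Gamma^{-1}$ for uniqueness, plus truncated domains at the $r$-boundary) are all correct and in line with the standard argument.
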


\section{Acknowledgements}
C.Olszowiec is grateful to Maciej Capi\'nski, Josep-Maria Mondelo and Sina T{\"u}reli for the discussions and to Polish Academy of Sciences, Centre de Recerca Matem\`atica and Imperial College Roth Studentship scheme for financial support.



\end{document}